\def\ps@pprintTitle{%
 \let\@oddhead\@empty
 \let\@evenhead\@empty
 \def\@oddfoot{}%
 \let\@evenfoot\@oddfoot}
\begin{document}

\begin{frontmatter}

\title{On Approximation for Fractional Stochastic Partial Differential Equations on the Sphere\tnoteref{tifn}}
\tnotetext[tifn]{This research was supported under the Australian Research Council's \emph{Discovery Project} DP160101366.}

\author[addqut,addxtu]{Vo V. Anh}
\ead{v.anh@qut.edu.au}
\author[addltu]{Philip Broadbridge}
\ead{P.Broadbridge@latrobe.edu.au}
\author[addltu]{Andriy Olenko}
\ead{A.Olenko@latrobe.edu.au}
\author[addltu,addunsw]{Yu Guang Wang\corref{corau}}
\ead{y.wang@latrobe.edu.au}

\cortext[corau]{Corresponding author.}


\address[addqut]{School of Mathematical Sciences, Queensland University of Technology, Brisbane, QLD, 4000, Australia}
\address[addxtu]{School of Mathematics and Computational Science, Xiangtan University, Hunan, 411105, China}
\address[addltu]{Department of Mathematics and Statistics, La Trobe University, Melbourne, VIC, 3086, Australia}
\address[addunsw]{School of Mathematics and Statistics, The University of New South Wales, Sydney, NSW, 2052, Australia}

\begin{abstract}
This paper gives the exact solution in terms of the Karhunen-Lo\`{e}ve expansion to a fractional stochastic partial differential equation on the unit sphere $\sph{2}\subset \Rd[3]$ with fractional Brownian motion as driving noise and with random initial condition given by a fractional stochastic Cauchy problem. A numerical approximation to the solution is given by truncating the Karhunen-Lo\`{e}ve expansion. We show the convergence rates of the truncation errors in degree and the mean square approximation errors in time. Numerical examples using an isotropic Gaussian random field as initial condition and simulations of evolution of cosmic microwave background (CMB) are given to illustrate the theoretical results.
\end{abstract}

\begin{keyword}
stochastic partial differential equations\sep fractional Brownian
motions\sep spherical harmonics\sep random fields\sep spheres\sep fractional calculus\sep Wiener noises\sep Cauchy problem\sep cosmic microwave background\sep FFT
\MSC[2010] 35R11\sep 35R01\sep 35R60\sep 60G22\sep 33C55\sep
35P10\sep 60G60\sep 41A25\sep 60G15\sep 35Q85\sep 65T50
\end{keyword}

\end{frontmatter}

\section{Introduction}\label{sec:intro}
Fractional stochastic partial differential equations (fractional SPDEs) on the unit sphere $\sph{2}$ in $\Rd[3]$ have numerous applications in environmental modelling and astrophysics, see \citep{AnKeLeRu2008,Brillinger1997,CaSt2013,Dodelson2003,Durrer2008,Hristopulos2003,LaGu1999,PiScWh2000,Planck2016I,RuReMe2013,Stein2007,StChAn2013}.
One of the merits of fractional SPDEs is that they can be used to maintain long range dependence in evolutions of complex systems \citep{AnLeRu2016,BeDuKa2015,HuLiNu2016,Inahama2013,Lyons1998}, such as climate change models and the density fluctuations in the primordial universe as inferred from the cosmic microwave background (CMB). 

In this paper, we give the exact and approximate solutions of the fractional SPDE on $\sph{2}$
\begin{equation}\label{eq:fSPDE}
	\IntD{\sol(t,\PT{x})} + \psi(-\LBo) \sol(t,\PT{x}) = \IntD{\fBmsph(t,\PT{x})},\quad t\ge0,\;\PT{x}\in\sph{2}.
\end{equation}
Here, for $\alpha\ge0$, $\gamma>0$,
the \emph{fractional diffusion operator}
\begin{equation}\label{eq:fLBo}
  \psi(-\LBo):=\frLBo
\end{equation}
is given in terms of Laplace-Beltrami operator $\LBo$ on $\sph{2}$ with
\begin{equation}\label{eq:psi}
	\psi(t):=t^{\alpha/2}(1+t)^{\gamma/2},\quad t\in\Rplus.
\end{equation}
The noise in \eqref{eq:fSPDE} is modelled by a \emph{fractional Brownian motion} (fBm) $\fBmsph(t,\PT{x})$ on $\sph{2}$ with Hurst index $\hurst\in[1/2,1)$ and variances $\vfBm$ at $t=1$. When $H=1/2$, $\fBmsph(t,\PT{x})$ reduces to the Brownian motion on $\sph{2}$.

The equation \eqref{eq:fSPDE} is solved under the initial condition $\sol(0,\PT{x})=\solC(t_{0},\PT{x})$, where $\solC(t_{0},\PT{x})$, $t_{0}\ge0$, is a \emph{random field} on the sphere $\sph{2}$, which is the solution of the fractional stochastic Cauchy problem at time $t_{0}$:
\begin{equation}\label{eq:fSCauchy}
	\begin{array}{ll}
	 \displaystyle\pdiff{\solC(t,\PT{x})}{t} + \psi(-\LBo) \solC(t,\PT{x}) =0\\[3mm]
	 \solC(0,\PT{x}) = \RF_{0}(\PT{x}),
	 \end{array}
\end{equation}
where $\RF_{0}$ is a (strongly) \emph{isotropic Gaussian random field} on $\sph{2}$, see Section~\ref{sec:fSCauchy}. For simplicity, we will skip the variable $\PT{x}$ if there is no confusion.

The fractional diffusion operator $\psi(-\LBo)$ on $\sph{2}$ in \eqref{eq:fSCauchy} and \eqref{eq:fLBo} is the counterpart to that in $\Rd[3]$. We recall that the operator $\mathcal{A}:=-\left(-\Delta\right)^{\alpha/2}\left( I-\Delta \right)^{\gamma/2}$, which is the inverse of the composition of the Riesz potential $\left(-\Delta\right)^{-\alpha/2}$, $\alpha \in (0,2]$, defined by the kernel 
\begin{equation*}
J_{\alpha }\left( x\right) =\frac{\Gamma \left(3/2-\alpha \right) }{\pi^{3/2}4^{\alpha }\Gamma \left( \alpha \right) }\left\vert x\right\vert^{2\alpha -3},\quad x\in \mathbb{R}^{3}
\end{equation*}%
and the Bessel potential $\left( I-\Delta \right)^{-\gamma/2}$, $\gamma\geq0$, defined by the kernel 
\begin{equation*}
I_{\gamma }\left(x\right) =\left[ \left(4\pi \right)^{\gamma}\Gamma\left(\gamma\right)\right]^{-1}\int_{0}^{\infty}e^{-\pi \left\vert x\right\vert ^{2}/s}e^{-s/4\pi}s^{\left(-3/2+\gamma \right)}\frac{\IntD{s}}{s}, \quad x\in \mathbb{R}^{3}
\end{equation*}%
(see \citep{Stein1970}), is the infinitesimal generator of a strongly continuous bounded holomorphic semigroup of angle $\pi /2$ on $L_{p}\left(\mathbb{R}^{3}\right)$ for $\alpha>0$, $\alpha +\gamma \geq 0$ and any $p\geq 1$, as shown in \citep{AnMc2004}. This semigroup defines the Riesz-Bessel distribution (and the resulting Riesz-Bessel motion) if and only if $\alpha
\in (0,2]$, $\alpha +\gamma \in [0,2]$. When $\gamma =0$, the
fractional Laplacian $-\left( -\Delta \right)^{\alpha /2}$, $\alpha \in (0,2]$, generates the L\'{e}vy $\alpha$-stable distribution. While the exponent of the inverse of the Riesz potential indicates how often large jumps occur, it is the combined effect of the inverses of the Riesz and Bessel potentials that describes the non-Gaussian behaviour of the process. More precisely, depending on the sum $\alpha +\gamma $ of the exponents of the inverses of the Riesz and Bessel potentials, the Riesz-Bessel motion will be either a compound Poisson process, a pure jump process with jumping times dense in $[0,\infty )$ or the sum of a compound Poisson process and an independent Brownian motion. Thus the operator $\mathcal{A}$ is able to generate a range of behaviours of random processes \citep{AnMc2004}.

The equations \eqref{eq:fSPDE} and \eqref{eq:fSCauchy} can be used to describe evolutions of two-stage stochastic systems. The equation \eqref{eq:fSCauchy} determines evolutions on the time interval $[0,t_{0}]$ while \eqref{eq:fSPDE} gives a solution for a system perturbed by fBm on the interval $[t_{0},t_{0}+t]$. CMB is an example of such systems, as it passed through different formation epochs, inflation, recombinatinon etc, see e.g. \citep{Dodelson2003}.

The exact solution of \eqref{eq:fSPDE} is given in the following expansion in terms of spherical harmonics $\shY$, or the \emph{Karhunen-Lo\`{e}ve expansion}:
\begin{align}\label{eq:intro.sol.fSPDE}
  \sol(t) &=  \sum_{\ell=0}^{\infty}\biggl(\sum_{m=-\ell}^{\ell}e^{-\freigv (t+t_{0})}\Fcoe{(\RF_{0})}\shY \notag\\
  &\hspace{1.5cm}+ \sqrt{\vfBm}\Bigl(\int_{0}^{t}e^{-\freigv(t-u)}\IntBa[{\ell 0}](u)\:\shY[\ell,0]\notag\\
          &\hspace{3cm} +\sqrt{2}\sum_{m=1}^{\ell}\bigl(\int_{0}^{t}e^{-\freigv(t-u)}\IntBa(u) \:\CRe \shY \notag\\
          &\hspace{4.6cm}+ \int_{0}^{t}e^{-\freigv(t-u)}\IntBb(u) \:\CIm \shY\bigr)\Bigr)\biggr).
\end{align}
Here, each fractional stochastic integral $\int_{0}^{t}e^{-\freigv(t-s)}\IntB(s)$ is an fBm with mean zero and variance explicitly given, see Section~\ref{sec:sol.fSPDE}, where $(\BMa(u),\BMb(u))$, $m=0,\dots,\ell$, $\ell\in\Nz$, is a sequence of real-valued independent fBms with Hurst index $H$ and variance $1$ (at $t=1$), and $\freigv$ are the eigenvalues of $\psi(-\LBo)$, see Section~\ref{sec:fun.sph2}.

By truncating the expansion \eqref{eq:intro.sol.fSPDE} at degree $\ell=\trdeg$, $\trdeg\ge1$, we obtain an approximation $\trsol(t)$ of the solution $\sol(t)$ of \eqref{eq:fSPDE}. Since the coefficients in the expansion \eqref{eq:intro.sol.fSPDE} can be fast simulated, see e.g. \citep[Section~12.4.2]{KrBo2015}, the approximation $\trsol(t)$ is fully computable and the computation is efficient using the FFT for spherical harmonics $\shY$, see Section~\ref{sec:numer}. We prove that the approximation $\trsol(t)$ of $\sol(t)$, $t>0$ 
(in $L_{2}$ norm on the product space of the probability space $\probSp$ and the sphere $\sph{2}$)
 has the convergence rate $\trdeg^{-\smind}$, $\smind>1$, if the variances $\vfBm$ of the fBm $\fBmsph$ satisfy the smoothness condition $\sum_{\ell=0}^{\infty} \vfBm(1+\ell)^{2\smind+1} <\infty$. This shows that the numerical approximation by truncating the expansion \eqref{eq:intro.sol.fSPDE} is effective and stable.

We also prove that $\sol(t+h)$ has the mean square approximation errors (or the mean quadratic variations) with order $h^{\hurst}$ from $\sol(t)$, as $h\to0+$, for $\hurst\in[1/2,1)$ and $t\ge0$. When $\hurst=1/2$, the Brownian motion case, the convergence rate can be as high as $h$ for $t>0$ (up to a constant). This means that the solution of the fractional SPDE \eqref{eq:fSPDE} evolves continuously with time and the fractional (Hurst) index $\hurst$ affects the smoothness of this evolution.

All above results are verified by numerical examples using an isotropic Gaussian random field as the initial random field. 

CMB is electromagnetic radiation propagating freely through the universe since recombination of ionised atoms and electrons around $370,000$ years after the big bang. As the map of CMB temperature can be modelled as a random field on $\sph{2}$, we apply the truncated solution of the fractional SPDE \eqref{eq:fSPDE} to explore evolutions of the CMB map, using the angular power spectrum of CMB at recombination which was obtained by Planck 2015 results \cite{Planck2016IX} as the initial condition of the Cauchy problem \eqref{eq:fSCauchy}. This gives some indication that the fractional SPDE is flexible enough as a phenomenological model to capture some of the statistical and spectral properties of the CMB that is in equilibrium with an expanding plasma through an extended radiation-dominated epoch.

The paper is organized as follows. Section~\ref{sec:pre} makes necessary preparations. Some results about fractional Brownian motions are derived in Section~\ref{sec:fBm}.  Section~\ref{sec:fSPDE} gives the exact solution of the fractional SPDE \eqref{eq:fSPDE} with fractional Brownian motions and random initial condition from the fractional stochastic Cauchy problem \eqref{eq:fSCauchy}. In Section~\ref{sec:approx.sol}, we give the convergence rate of the approximation errors of truncated solutions in degree and the mean square approximation errors of the exact solution in time. Section~\ref{sec:numer} gives numerical examples.

\section{Preliminaries}\label{sec:pre}
Let $\mathbb{R}^{3}$ be the real
$3$-dimensional Euclidean space with the inner product $\PT{x}\cdot\PT{y}$
for $\PT{x},\PT{y}\in \REuc[3]$ and the Euclidean norm
$|\PT{x}|:=\sqrt{\PT{x}\cdot\PT{x}}$. Let
    $\sph{2}:=\{\PT{x}\in\REuc[3]: |\PT{x}|=1\}$
denote the unit sphere in $\REuc[3]$. The sphere $\sph{2}$ forms a compact
metric space, with the geodesic distance
  $\dist{\PT{x},\PT{y}}:=\arccos(\PT{x}\cdot\PT{y})$ for $\PT{x},\PT{y}\in \sph{2}$ as the metric.

Let $(\probSp,\mathcal{F},\probm)$ be a probability space. Let $\Lpprob[{,\probm}]{2}$ be the $L_{2}$-space on $\probSp$ with respect to the probability measure $\probm$, endowed with the norm $\norm{\cdot}{\Lpprob{2}}$. Let $\rv,\rvb$ be two random variables on $(\probSp,\mathcal{F},\probm)$. Let $\expect{\rv}$ be the expected value of $\rv$, $\cov{\rv,\rvb}:=\expect{(\rv-\expect{\rv})(\rvb-\expect{\rvb})}$ be the covariance between $\rv$ and $\rvb$ and $\var{X}:=\cov{\rv,\rv}$ be the variance of $\rv$.

Let $\Lppsph{2}{2}:=\Lppsph[,\prodpsphm]{2}{2}$ be the real-valued $L_{2}$-space on the product space of $\probSp$ and $\sph{2}$, where $\prodpsphm$ is the corresponding product measure.


\subsection{Functions on $\sph{2}$}\label{sec:fun.sph2}
Let $\Lp{2}{2}=\Lp[,\sphm]{2}{2}$ be a space of all real-valued functions that are square-integrable with respect to the normalized Riemann surface measure $\sphm$ on $\sph{2}$ (that is, $\sphm(\sph{2})=1$), endowed with the $L_{2}$-norm
\begin{equation*}
  \norm{f}{\Lp{2}{2}}:=\left\{\int_{\sph{2}}|f(\PT{x})|^{2}\IntDiff[]{x}\right\}^{1/2}.
\end{equation*}
The space $\Lp{2}{2}$ is a Hilbert space with the inner product
\begin{equation*}
	\InnerL{f,g}:=\InnerL[\Lp{2}{2}]{f,g}:=\int_{\sph{2}}f(\PT{x})g(\PT{x})\IntDiff[]{x},\quad f,g\in\Lp{2}{2}.
\end{equation*}

A \emph{spherical harmonic} of degree $\ell$, $\ell\in \Nz:=\{0,1,2,\dots\}$, on $\sph{2}$ is the restriction to $\sph{2}$ of a homogeneous and harmonic polynomial of total degree $\ell$ defined on $\REuc[3]$.
Let $\shSp[2]{\ell}$ denote the set of all spherical harmonics of exact degree $\ell$ on $\sph{2}$. The dimension of the linear space $\shSp[2]{\ell}$ is $2\ell+1$.
The linear span of
$\shSp[2]{\ell}$, $\ell=0,1,\dots,L$, forms the space
$\sphpo[2]{L}$ of spherical polynomials of degree at most $L$.

Since each pair $\shSp[2]{\ell}$, $\shSp[2]{\ell'}$ for $\ell\neq \ell'\in\Nz$ is $L_{2}$-orthogonal, $\sphpo[2]{L}$ is the direct sum of $\shSp[2]{\ell}$, i.e. $\sphpo[2]{L}=\bigoplus_{\ell=0}^{L} \shSp[2]{\ell}$. The infinite direct sum $\bigoplus_{\ell=0}^{\infty} \shSp[2]{\ell}$ is dense in $\Lp{2}{2}$, see e.g. \citep[Ch.1]{WaLi2006}. For $\PT{x}\in\sph{2}$, using spherical coordinates $\PT{x}:=(\sin\theta \sin\varphi, \sin\theta\cos\varphi,\cos\theta)$, $\theta\in[0,\pi]$, $\varphi\in[0,2\pi)$, the Laplace-Beltrami operator on $\sph{2}$ at $\PT{x}$ is
\begin{equation*}
\LBo:= \frac{1}{\sin\theta}\frac{\partial}{\partial \theta}\left(\sin\theta\frac{\partial}{\partial \theta}\right) + \frac{1}{\sin^{2}\theta}\frac{\partial^{2}}{\partial \varphi^{2}},
\end{equation*}
see \citep[Eq.~1.6.8]{DaXu2013} and also \citep[p.~38]{Muller1966}.
Each member of $\shSp[2]{\ell}$ is an eigenfunction of the negative Laplace-Beltrami operator $-\LBo$ on the sphere $\sph{2}$ with the eigenvalue
\begin{equation}\label{eq:eigenvalue}
  \eigvm:=\ell(\ell+1).
\end{equation}
For $\alpha\ge0$ and $\gamma>0$, using \eqref{eq:psi}, the fractional diffusion operator $\psi(-\LBo)$ in \eqref{eq:fLBo} has
the eigenvalues
\begin{equation}\label{eq:fr.eigvm}
  \freigv =\eigvm^{\alpha/2}(1+\eigvm)^{\gamma/2},\quad \ell\in\Nz,
\end{equation}
see \citep[p.~119--120]{DaLi1990}. By \eqref{eq:eigenvalue} and \eqref{eq:fr.eigvm},
\begin{equation}\label{eq:feigv.est}
  \freigv \asymp (1+\ell)^{\alpha+\gamma},\quad \ell\in\Nz,
\end{equation}
where $a_{\ell}\asymp
b_{\ell}$ means $c\:b_{\ell}\le a_{\ell}\le c' \:b_{\ell}$ for some
positive constants $c$ and $c'$.

A \emph{zonal function} is a function $K:\sph{2}\times\sph{2}\rightarrow \mathbb{R}$ that depends only on the inner product of the arguments, i.e. $K(\PT{x},\PT{y})= \mathfrak{K}(\PT{x}\cdot\PT{y})$,\: $\PT{x},\PT{y}\in \sph{2}$, for some function $\mathfrak{K}:[-1,1]\to \mathbb{R}$.
Let $\Legen{\ell}(t)$, $-1\le t\le1$, $\ell\in\Nz$, be the Legendre polynomial of degree $\ell$.
From \citep[Theorem~7.32.1]{Szego1975}, the zonal function $\Legen{\ell}(\PT{x}\cdot\PT{y})$ is a spherical polynomial of degree $\ell$ of $\PT{x}$ (and also of $\PT{y}$).

Let $\{\shY: \ell\in\Nz,\; m=-\ell,\dots,\ell\}$ be an \emph{orthonormal basis} for the space $\Lp{2}{2}$.
The basis $\shY$ and the Legendre polynomial $\Legen{\ell}(\PT{x}\cdot\PT{y})$ satisfy the \emph{addition theorem}
\begin{equation}\label{eq:addition.theorem}
  \sum_{m=-\ell}^{\ell}Y_{\ell,m}(\PT{x})Y_{\ell,m}(\PT{y})=(2\ell+1)\Legen{\ell}(\PT{x}\cdot \PT{y}).
\end{equation}

In this paper, we focus on the following (complex-valued) orthonormal basis, which are used in physics. Using the spherical coordinates $(\theta,\phi)$ for $\PT{x}$,
\begin{equation}\label{eq:shY.C}
	\shY(\PT{x}) := \sqrt{\frac{(2\ell+1)(\ell-m)!}{(\ell+m)!}}\aLegen{m}(\cos\theta)e^{\imu m\varphi},\quad \ell\in \Nz,\; -\ell\le m\le \ell,
\end{equation}
where $\aLegen{m}(t)$, $t\in[-1,1]$ is the associated Legendre polynomial of degree $\ell$ and order $m$. 

The \emph{Fourier coefficients} for $f$ in $\Lp{2}{2}$ are
\begin{align}\label{eq:Fo coeff}
 \widehat{f}_{\ell m}:=\int_{\sph{2}}f(\PT{x})\shY(\PT{x}) \IntDiff[]{x}, \;\; \ell\in\Nz,\: m=-\ell,\dots,\ell.
\end{align}

Since $\shY=(-1)^{m}\conj{\shY[\ell,-m]}$ and $\widehat{f}_{\ell m}=(-1)^{m}\conj{\widehat{f}_{\ell, -m}}$, for $f\in\Lp{2}{2}$, in $\Lp{2}{2}$ sense,
\begin{equation}\label{eq:f.shY.real.expan}
	f = \sum_{\ell=0}^{\infty}\left(\widehat{f}_{\ell 0}\shY[\ell,0]+2\sum_{m=1}^{\ell}\left(\CRe{\widehat{f}_{\ell m}}\:\CRe{\shY}-\CIm{\widehat{f}_{\ell m}}\:\CIm{\shY}\right)\right).
\end{equation}

Note that the results of this paper can be generalized to any other orthonormal basis.

For $\smind\in\mathbb{R}_{+}$,  
the \emph{generalized Sobolev space} $\sob{2}{\smind}{2}$ is defined as the set of all functions $f\in \Lp{2}{2}$ satisfying
  $\sLB{\smind/2} f \in \Lp{2}{2}$.
The Sobolev space $\sob{2}{\smind}{2}$ forms a Hilbert space with norm
$\norm{f}{\sob{2}{\smind}{2}}:=\normb{\sLB{\smind/2} f}{\Lp{2}{2}}$.
We let $\sob{2}{0}{2}:=\Lp{2}{2}$.

\subsection{Isotropic random fields on $\sph{2}$}
Let $\mathscr{B}(\sph{2})$ denote the Borel $\sigma$-algebra on $\sph{2}$ and let $\RotGr[3]$ be the rotation group on $\REuc[3]$.
\begin{definition}
	An $\mathcal{F}\otimes\mathscr{B}(\sph{2})$-measurable function $T:\probSp\times\sph{2}\to\mathbb{R}$ is said to be a real-valued random field on the sphere $\sph{2}$.
\end{definition}
 We will use $\RF(\PT{x})$ or $\RF(\omega)$ as $\RF(\omega,\PT{x})$ for brevity if no confusion arises.

We say $\RF$ is \emph{strongly isotropic} if for any $k\in\N$ and for all sets of $k$ points $\PT{x}_{1},\cdots,\PT{x}_{k}\in\sph{2}$ and for any rotation $\rho\in \RotGr[3]$, joint distributions of $\RF(\PT{x}_{1}), \dots,\RF(\PT{x}_{k})$ and $\RF(\rho\PT{x}_{1}),\dots,$ $\RF(\rho\PT{x}_{k})$ coinside.

We say $\RF$ is $2$-\emph{weakly isotropic} if for all $\PT{x}\in \sph{2}$ the second moment of $\RF(\PT{x})$ is finite, i.e.
  $\expect{|\RF(\PT{x})|^{2}}<\infty$
and if for all $\PT{x}\in\sph{2}$ and for all pairs of points $\PT{x}_{1},\PT{x}_{2}$ $\in \sph{2}$ and for any rotation $\rho\in\RotGr[3]$ it holds
\begin{equation*}
  \expect{\RF(\PT{x})}=\expect{\RF(\rho\PT{x})},\quad \expect{\RF(\PT{x}_{1})\RF(\PT{x}_{2})}=\expect{\RF(\rho\PT{x}_{1})\RF(\rho\PT{x}_{2})},
\end{equation*}
see e.g. \citep{Adler2009,LaSc2015,MaPe2011}.

In this paper, we assume that a random field $\RF$ on $\sph{2}$ is \emph{centered}, that is, $\expect{\RF(\PT{x})}=0$ for $\PT{x}\in\sph{2}$.

Now, let $\RF$ be $2$-weakly isotropic.
The covariance $\expect{\RF(\PT{x})\RF(\PT{y})}$, because it is rotationally invariant,
is a zonal kernel on $\sph{2}$
\begin{equation*}
  \covarRF(\PT{x}\cdot\PT{y}):=\expect{\RF(\PT{x})\RF(\PT{y})}.
\end{equation*}
This zonal function $\covarRF(\cdot)$ is said to be the \emph{covariance function} for $\RF$.
The covariance function $\covarRF(\cdot)$ is in $\Lpw[{[-1,1]}]{2}$ and has a convergent Fourier expansion
$\covarRF = \sum_{\ell=0}^{\infty}\APS (2\ell+1) \Legen{\ell}$ in $\Lpw[{[-1,1]}]{2}$.
The set of Fourier coefficients
\begin{equation*}
    \APS
    := \int_{\sph{2}}\covarRF(\PT{x}\cdot\PT{y}) \Legen{\ell}(\PT{x}\cdot\PT{y}) \IntDiff[]{x}
    =\frac{1}{2\pi} \int_{-1}^{1} \covarRF(t)\Legen{\ell}(t) \IntD{t}
\end{equation*}
is said to be the \emph{angular power spectrum} for the random field $\RF$, where the second equality follows by the properties of zonal functions.

By the addition theorem in \eqref{eq:addition.theorem} we can write
\begin{equation}\label{eq:covariance.G.expan}
  \expect{\RF(\PT{x})\RF(\PT{y})}=\covarRF(\PT{x}\cdot\PT{y})= \sum_{\ell=0}^{\infty} \APS (2\ell+1) \Legen{\ell}(\PT{x}\cdot\PT{y})=\sum_{\ell=0}^{\infty} \APS \sum_{m=-\ell}^{\ell} \shY(\PT{x})\shY(\PT{y}).
\end{equation}

We define \emph{Fourier coefficients} for a random field $\RF$ by, cf. \eqref{eq:Fo coeff},
\begin{equation}\label{eq:Fcoe.RF}
  \widehat{\RF}_{\ell m} := \InnerL{\RF, \shY},\quad \ell\in\Nz,\; m=-\ell,\dots, \ell.
\end{equation}

The following lemma, from \citep[p.~125]{MaPe2011} and \citep[Lemma~4.1]{LeSlWaWo2017}, shows the orthogonality of the Fourier coefficients $\Fcoe{\RF}$ of $\RF$.
\begin{lemma}[\citep{LeSlWaWo2017,MaPe2011}]\label{lem:orth.Fcoe.RF} Let $\RF$ be a $2$-weakly isotropic random field on $\sph{2}$ with angular power spectrum $\APS$. Then for $\ell,\ell'\ge0$, $m=-\ell,\dots,\ell$ and $m'=-\ell',\dots,\ell'$,
\begin{equation}\label{eq:orth.Fcoe.RF}
  \expect{\Fcoe{\RF}\Fcoe[\ell' m']{\RF}}=\APS \Kron \Kron[m m'],
\end{equation}
where $\Kron$ is the Kronecker delta.
\end{lemma}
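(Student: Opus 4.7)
The plan is to start from the definition of the Fourier coefficients in \eqref{eq:Fcoe.RF}, apply Fubini's theorem to interchange expectation and integration, substitute the covariance expansion from \eqref{eq:covariance.G.expan}, and finally use orthonormality of the spherical harmonics $\shY$ to collapse the resulting double sum. Since $\RF$ is $2$-weakly isotropic, $\expect{|\RF(\PT{x})|^2}$ is rotationally invariant and hence constant on $\sph{2}$, so $\RF\in \Lppsph{2}{2}$ and both integrands below are jointly integrable.

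Concretely, I would first write
\begin{equation*}
	\expect{\Fcoe{\RF}\Fcoe[\ell' m']{\RF}}
	= \expect{\int_{\sph{2}}\RF(\PT{x})\shY(\PT{x})\IntDiff[]{x}\int_{\sph{2}}\RF(\PT{y})\shY[\ell',m'](\PT{y})\IntDiff[]{y}}.
\end{equation*}
An application of Fubini's theorem (valid by the square-integrability just noted and the boundedness of $\sph{2}$) moves the expectation inside and produces the covariance kernel, so the right-hand side equals
\begin{equation*}
	\int_{\sph{2}}\!\!\int_{\sph{2}}\covarRF(\PT{x}\cdot\PT{y})\,\shY(\PT{x})\shY[\ell',m'](\PT{y})\IntDiff[]{x}\IntDiff[]{y}.
\end{equation*}

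Next I would substitute the expansion of $\covarRF$ from \eqref{eq:covariance.G.expan}, namely $\covarRF(\PT{x}\cdot\PT{y})=\sum_{k=0}^{\infty}\APS[k]\sum_{n=-k}^{k}\shY[k,n](\PT{x})\shY[k,n](\PT{y})$. Exchanging the sums with the double integral (justified by $L_2$-convergence of the series and the orthonormality of $\shY$), the inner $\PT{x}$ and $\PT{y}$ integrals produce, via orthonormality, Kronecker deltas $\Kron[k\ell]\Kron[n m]$ and $\Kron[k\ell']\Kron[n m']$ respectively. Only the term with $k=\ell=\ell'$ and $n=m=m'$ survives, leaving $\APS\Kron\Kron[m m']$ as claimed.

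The main technical obstacle is the justification of the two interchanges (Fubini and sum/integral), which must be handled carefully because the series for $\covarRF$ converges a priori only in $\Lpw[{[-1,1]}]{2}$. One clean way is to approximate $\covarRF$ by its truncated Fourier partial sums $\covarRF^{(L)}$, perform the (now finite) interchanges, and then pass to the limit $L\to\infty$ using the continuity of the integral operator $f\mapsto \int_{\sph{2}\times\sph{2}} f(\PT{x}\cdot\PT{y})\shY(\PT{x})\shY[\ell',m'](\PT{y})\IntDiff[]{x}\IntDiff[]{y}$ on $\Lpw[{[-1,1]}]{2}$. The remaining computation is then the routine orthonormality argument sketched above.
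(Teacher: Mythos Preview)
The paper does not prove this lemma; it is quoted directly from \citep[p.~125]{MaPe2011} and \citep[Lemma~4.1]{LeSlWaWo2017} without argument, so there is no in-paper proof to compare against. Your proposal is the standard proof and is correct: write out the definition of $\Fcoe{\RF}$, use Fubini (legitimate since $2$-weak isotropy forces $\expect{|\RF(\PT{x})|^{2}}$ to be constant, hence $\RF\in\Lppsph{2}{2}$), insert the expansion \eqref{eq:covariance.G.expan}, and integrate against the orthonormal $\shY$. Your handling of the sum/integral interchange via truncation and the $\Lpw[{[-1,1]}]{2}$-continuity of the bilinear functional $f\mapsto \iint f(\PT{x}\cdot\PT{y})\shY(\PT{x})\shY[\ell',m'](\PT{y})\IntDiff[]{x}\IntDiff[]{y}$ is a clean way to justify the limit; alternatively one can note that $\covarRF(1)=\sum_{\ell}(2\ell+1)\APS<\infty$ with $\APS\ge0$, which gives absolute convergence and makes the interchange immediate.
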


We say $\RF$ a \emph{Gaussian random field} on $\sph{2}$ if for each $k\in\N$ and $\PT{x}_{1}, \dots,\PT{x}_{k}\in \sph{2}$, the vector $(\RF(\PT{x}_1),\dots,\RF(\PT{x}_{k}))$ has a multivariate Gaussian distribution.

We note that a Gaussian random field is strongly isotropic if and only if it is $2$-weakly isotropic, see e.g. \citep[Proposition~5.10(3)]{MaPe2011}.

\section{Fractional Brownian motion}\label{sec:fBm}
Let $\hurst\in(1/2,1)$ and $\sigma>0$. A \emph{fractional Brownian motion} (fBm) $\fBm(t), t\ge0$, with index $\hurst$ and variance $\sigma^{2}$ at $t=1$ is a centered Gaussian process on $\Rplus$ satisfying
\begin{equation*}
	\fBm(0)=0,\quad \expect{\bigl|\fBm(t)-\fBm(s)\bigr|^{2}} = |t-s|^{2\hurst}\sigma^{2}.
\end{equation*}
The constant $\hurst$ is called the \emph{Hurst index}. See e.g. \citep{BiHuOkZh2008}.
By the above definition, the variance of $\fBm(t)$ is $\expect{|\fBm(t)|^{2}}=t^{2\hurst}\sigma^{2}$. 

For convenience, we use $\fBm[1/2](t)$ (with $\sigma=1$) to denote the Brownian motion (or the Wiener process) on $\Rplus$.

\begin{definition}\label{defn:complex.fBm}
	Let $\hurst\in[1/2,1)$. Let $\beta^{1}(t)$ and $\beta^{2}(t)$ be independent real-valued fBms with the Hurst index $\hurst$ and variance $1$ (at $t=1$). A complex-valued fractional Brownian motion $\fBm(t)$, $t\ge0$, with Hurst index $\hurst$ and variance $\sigma^{2}$ can be defined as
\begin{equation*}
	\fBm(t) = \bigl(\beta^{1}(t) + \imu \beta^{2}(t)\bigr)\sigma.
\end{equation*}
\end{definition}

We define the \emph{$\Lp{2}{2}$-valued fractional Brownian motion} $\fBmsph(t)$ as follows, see Grecksch and Anh \citep[Definition~2.1]{GrAn1999}.

\begin{definition}\label{defn:fBmsph}
Let $\hurst\in[1/2,1)$. Let $\vfBm>0$, $\ell\in\Nz$ satisfying  $\sum_{\ell=0}^{\infty}(2\ell+1)\vfBm<\infty$. Let $\fBm_{\ell m}(t)$, $t\ge0$, $\ell\in\Nz, m=-\ell,\dots,\ell$ be a sequence of independent complex-valued fractional Brownian motions on $\Rplus$ with Hurst index $\hurst$, and variance $\vfBm$ at $t=1$ and $\CIm{\fBm_{\ell 0}(t)}=0$ for $\ell\in\Nz$, $t\ge0$.
For $t\ge0$, the $\Lp{2}{2}$-valued fractional Brownian motion is defined by the following expansion (in $\Lppsph{2}{2}$ sense) in spherical harmonics with fBms $\fBm_{\ell m}(t)$ as coefficients:
\begin{equation}\label{eq:fBmsph}
	\fBmsph(t,\PT{x}):=\sum_{\ell=0}^{\infty}\sum_{m=-\ell}^{\ell}\fBm_{\ell m}(t) \shY(\PT{x}),\quad \PT{x}\in\sph{2}.
\end{equation}
\end{definition}
We also call $\fBmsph(t,\PT{x})$ in Definition~\ref{defn:fBmsph} a fractional Brownian motion on $\sph{2}$. 

The fBm $\fBmsph(t,\PT{x})$ in \eqref{eq:fBmsph} is well-defined since for $t\ge0$, by Parseval's identity,
\begin{equation*}
	\expect{\norm{\fBmsph(t,\cdot)}{\Lp{2}{2}}^{2}}\le \sum_{\ell=0}^{\infty}\sum_{m=-\ell}^{\ell}\expect{\left|\fBm_{\ell m}(t) \right|^{2}}=t^{2\hurst}\sum_{\ell=0}^{\infty}(2\ell+1)\vfBm<\infty.
\end{equation*}

We let in this paper $\fBmsph(t,\PT{x})$ be real-valued. For $\ell\in\Nz$, let 
\begin{align*}
	\sqrt{\vfBm}\:\BMa[\ell0](t)&:=\fBm_{\ell 0}(t), \;\; \BMb[\ell0](t):=\BMa[\ell0](t),\\
	\sqrt{\frac{\vfBm}{2}}\BMa(t)&:=\CRe{\fBm_{\ell m}(t)},\;\;
	\sqrt{\frac{\vfBm}{2}}\BMb(t):=-\CIm{\fBm_{\ell m}(t)}=\CIm{\fBm_{\ell m}(t)},\quad m=1,\dots,\ell,
\end{align*}
in law.
Then, $(\BMa,\BMb)$, $m=0,\dots,\ell$, $\ell\in\Nz$, is a sequence of independent fBms with Hurst index $H$ and variance $1$ (at $t=1$). 

By \eqref{eq:f.shY.real.expan}, we can write \eqref{defn:complex.fBm} as, for $t\ge0$, in $\Lppsph{2}{2}$ sense, 
\begin{equation}\label{eq:fBmsph.real.expan}
	\fBmsph(t) = \sum_{\ell=0}^{\infty}\sqrt{\vfBm}\left(\BMa[\ell 0](t)\shY[\ell,0]+\sqrt{2}\sum_{m=1}^{\ell}\left(\BMa(t)\:\CRe{\shY} 
	+\BMb(t)\:\CIm{\shY}\right)\right).
\end{equation}

\begin{remark}
For a Hilbert space $H$, Duncan et al. \cite{DuJaPa2006,DuPaMa2002} introduced an $H$-valued cylindrical fractional Brownian motion, whereas our approach yields $Q$-fractional Brownian motions for a kernel operator $Q$. But Duncan et al.'s results can be modified for $Q$-fractional Brownian motions.
\end{remark}

\begin{remark}
We give the covariance of $\fBmsph(t,\PT{x})$, as follows. As in Definition~\ref{defn:fBmsph}, $\hurst\in[1/2,1)$, $\vfBm>0$, $\sum_{\ell=0}^{\infty}(2\ell+1)\vfBm <\infty$, $\{\fBm_{\ell m}(t)| \ell\in\Nz, m=-\ell,\dots,\ell\}$ is a sequence of centered independent fBms with Hurst index $\hurst$. Note that 
\begin{align*}
	&\expect{|\fBm_{\ell m}(1)|^{2}} = \vfBm, \quad \expect{\fBm_{\ell m}(t)} = 0,\quad \mbox{for~} t\ge0 \mbox{~and all~} \ell\in\Nz, m=-\ell,\dots,\ell,\\
	&\sum_{\ell=0}^{\infty}\sum_{m=-\ell}^{\ell}\expect{|\fBm_{\ell m}(t)|^{2}} = t^{2\hurst}\sum_{\ell=0}^{\infty}(2\ell+1)\vfBm.
\end{align*}
Then, for $t\ge0$, $\PT{x},\PT{y}\in\sph{2}$,
\begin{align}\label{eq:cov.fBm}
	\expect{\fBmsph(t,\PT{x})\fBmsph(t,\PT{y})}
	&= \expect{\sum_{\ell=0}^{\infty}\sum_{m=-\ell}^{\ell}\fBm_{\ell m}(t)\shY(\PT{x})\sum_{\ell'=0}^{\infty}\sum_{m'=-\ell'}^{\ell'}\fBm_{\ell' m'}(t)\shY[\ell',m'](\PT{y})}\notag\\
	&= \sum_{\ell=0}^{\infty}\sum_{\ell'=0}^{\infty}\sum_{m=-\ell}^{\ell}\sum_{m'=-\ell'}^{\ell'}\expect{\fBm_{\ell m}(t)\fBm_{\ell' m'}(t)}\shY(\PT{x})\shY[\ell',m'](\PT{y})\notag\\
	&= \sum_{\ell=0}^{\infty}\sum_{m=-\ell}^{\ell}
	\expect{|\fBm_{\ell m}(t)|^{2}}\shY(\PT{x})\shY(\PT{y})\notag\\
	&= t^{2\hurst}\sum_{\ell=0}^{\infty}(2\ell+1)\vfBm \Legen{\ell}(\PT{x}\cdot\PT{y}),
\end{align}
where the last equality uses \eqref{eq:addition.theorem}.
\end{remark}

For a bounded measurable function $g$ on $\Rplus$ (which is deterministic), the stochastic integral $\int_{s}^{t}g(u)\IntD{\fBm_{\ell m}(u)}$ can be defined as a Riemann-Stieltjes integral, see \citep{Lin1995}.
The \emph{$\Lp{2}{2}$-valued stochastic integral} $\int_{s}^{t}g(u)\IntD{\fBmsph(u)}$ can then be defined as an expansion in spherical harmonics with coefficients $\int_{s}^{t}g(u)\IntD{\fBm_{\ell m}(u)}$, as follows. 
\begin{definition}\label{defn:IntD.fBmsph}
	Let $\hurst\in[1/2,1)$ and let $\fBmsph(t)$ be an $\Lp{2}{2}$-valued fBm with the Hurst index $\hurst$. For $t>s\ge0$, the fractional stochastic integral $\int_{s}^{t}g(u)\IntD{\fBmsph(u)}$ for a bounded measurable function $g$ on $\Rplus$ is defined by, in $\Lppsph{2}{2}$ sense,
	\begin{equation*}
		\int_{s}^{t}g(u)\IntD{\fBmsph(u)} := \sum_{\ell=0}^{\infty}\sum_{m=-\ell}^{\ell}\left(\int_{s}^{t}g(u)\IntD{\fBm_{\ell m}(u)}\right) \shY.
	\end{equation*}
\end{definition}

The following theorem \citep[Lemma~2.3]{GrAn1999} provides an upper bound for $\int_{s}^{t}g(u)\IntD{\fBmsph(u)}$.
\begin{proposition}\label{thm:BD.Int.fBmsph}
	Let $\hurst\in[1/2,1)$. Let $g$ be a bounded measurable function on $\Rplus$, and if $\hurst>1/2$,
	$\int_{s}^{t}\int_{s}^{t}g(u)g(v)|u-v|^{2\hurst-2}\IntD{u}\IntD{v}<\infty$.
	For $t>s\ge0$, the fractional stochastic integral $\int_{s}^{t}g(u)\IntD{\fBmsph(u)}$ given by Definition~\ref{defn:IntD.fBmsph} satisfies
	\begin{equation*}
		\expect{\norm{\int_{s}^{t}g(u)\IntD{\fBmsph(u)}}{\Lp{2}{2}}^{2}} \le C\: \left(\int_{s}^{t}|g(u)|^{\frac{1}{\hurst}}\IntD{u}\right)^{2\hurst} \sum_{\ell=0}^{\infty}(2\ell+1)\vfBm,
	\end{equation*}
	where the constant $C$ depends only on $\hurst$.
\end{proposition}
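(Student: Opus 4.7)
The plan is to reduce the $\Lp{2}{2}$-valued estimate to a countable sum of scalar fBm-integral variances via Parseval's identity, then to invoke the standard variance formula for stochastic integrals against a single fBm, and finally to convert the resulting weighted double integral into the $L^{1/\hurst}$-norm on the right-hand side by a Hardy--Littlewood--Sobolev (HLS) inequality.

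By Definition~\ref{defn:IntD.fBmsph} the integral is $\sum_{\ell,m}\bigl(\int_{s}^{t}g(u)\IntD{\fBm_{\ell m}(u)}\bigr)\shY$. Since $\{\shY\}$ is an orthonormal basis of the complex $L_{2}$ space on $\sph{2}$, Parseval's identity combined with monotone convergence (to pass expectation through the series of non-negative terms) yields
\begin{equation*}
\expect{\normb{\int_{s}^{t}g(u)\IntD{\fBmsph(u)}}{\Lp{2}{2}}^{2}} = \sum_{\ell=0}^{\infty}\sum_{m=-\ell}^{\ell}\expect{\Bigl|\int_{s}^{t}g(u)\IntD{\fBm_{\ell m}(u)}\Bigr|^{2}}.
\end{equation*}
Expressing each $\fBm_{\ell m}$ as a complex combination of two independent real unit-variance fBms scaled by $\sqrt{\vfBm}$ as in Definition~\ref{defn:complex.fBm}, each summand equals a constant (independent of $\ell,m$) times $\vfBm\,\expect{\bigl|\int_{s}^{t}g(u)\IntD{\beta(u)}\bigr|^{2}}$, where $\beta$ is a standard real fBm of Hurst index $\hurst$ and unit variance at $t=1$.

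For the scalar integral, the case $\hurst=1/2$ is handled by It\^o's isometry, which gives $\expect{|\int_{s}^{t}g\IntD{\beta}|^{2}} = \int_{s}^{t}|g(u)|^{2}\IntD{u}$, matching $(\int_{s}^{t}|g(u)|^{1/\hurst}\IntD{u})^{2\hurst}$ at $\hurst=1/2$. For $\hurst>1/2$, the classical identity for Riemann--Stieltjes integrals against fBm (see, e.g., \citep{Lin1995}) states
\begin{equation*}
\expect{\Bigl|\int_{s}^{t}g(u)\IntD{\beta(u)}\Bigr|^{2}} = \hurst(2\hurst-1)\int_{s}^{t}\int_{s}^{t}g(u)g(v)|u-v|^{2\hurst-2}\IntD{u}\IntD{v},
\end{equation*}
which is finite by the hypothesis on $g$.

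The main obstacle is converting this double integral into the $L^{1/\hurst}$-norm. This is precisely the one-dimensional HLS inequality with conjugate exponents $p=q=1/\hurst$ and kernel singularity $\lambda=2-2\hurst\in(0,1)$, giving
\begin{equation*}
\int_{s}^{t}\int_{s}^{t}g(u)g(v)|u-v|^{2\hurst-2}\IntD{u}\IntD{v} \le c_{\hurst}\Bigl(\int_{s}^{t}|g(u)|^{1/\hurst}\IntD{u}\Bigr)^{2\hurst}.
\end{equation*}
Substituting back and summing over $m=-\ell,\dots,\ell$ contributes a factor $2\ell+1$ per degree, and the summability assumption $\sum_{\ell\ge0}(2\ell+1)\vfBm<\infty$ from Definition~\ref{defn:fBmsph} both justifies the interchange of expectation and sum and delivers the factor $\sum_{\ell\ge0}(2\ell+1)\vfBm$ on the right-hand side. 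The Parseval reduction and the scalar variance identity are routine; the only substantive analytic input is the HLS inequality.
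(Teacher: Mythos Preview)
Your proof is correct and follows essentially the same route as the paper's: Parseval's identity reduces the $L_{2}(\sph{2})$-norm to the sum $\sum_{\ell,m}\expect{|\int_{s}^{t}g\,\IntD{\fBm_{\ell m}}|^{2}}$, and then each scalar term is bounded by $C_{\hurst}\vfBm\bigl(\int_{s}^{t}|g|^{1/\hurst}\bigr)^{2\hurst}$, after which summing over $m$ gives the factor $(2\ell+1)\vfBm$. The only difference is that the paper simply cites the scalar bound as \cite[Theorem~1.1]{MeMiVa2001}, whereas you unpack that citation by writing out the covariance formula $\hurst(2\hurst-1)\iint g(u)g(v)|u-v|^{2\hurst-2}\IntD{u}\IntD{v}$ and then applying the one-dimensional Hardy--Littlewood--Sobolev inequality (and It\^o's isometry when $\hurst=1/2$); this is precisely how that theorem is proved, so the two arguments coincide.
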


\section{Fractional SPDE}\label{sec:fSPDE}
This section studies the Karhunen-Lo\`{e}ve expansion of the solution to the fractional SPDE \eqref{eq:fSPDE} with the fractional diffusion (Laplace-Beltrami) operator $\psi(-\LBo)$ in \eqref{eq:fLBo} and the $\Lp{2}{2}$-valued fractional Brownian motion $\fBmsph(t)$ given in Definition~\ref{defn:fBmsph}. The random initial condition is a solution of the fractional stochastic Cauchy problem \eqref{eq:fSCauchy}. We will give the convergence rates for the approximation errors of the truncated Karhunen-Lo\`{e}ve expansion in degree and the mean square approximation errors in time of the solution of \eqref{eq:fSPDE}.


\subsection{Random initial condition as a solution of fractional stochastic Cauchy problem}\label{sec:fSCauchy}
Let
\begin{equation}\label{eq:ran.cond}
	\RF_{0}(\PT{x}) := \sum_{\ell=0}^{\infty} \sum_{m=-\ell}^{\ell}\Fcoe{(\RF_{0})} \shY(\PT{x}),\quad \PT{x}\in\sph{2}
\end{equation}
be a centered, $2$-weakly isotropic Gaussian random field on $\sph{2}$. Let the sequence $\{\vT\}_{\ell\in\Nz}$ be the angular power spectrum of $\RF_{0}$. Let
\begin{equation}\label{eq:vTc}
	\vTc := \left\{\begin{array}{ll}
		\vT[0], & \ell=0,\\
		\vT/2, & \ell\ge1.
	\end{array}
	\right.
\end{equation}
Then, $\Fcoe{(\RF_{0})}$ follows the normal distribution $\nrml(0,\vTc)$.

Let $\alpha\ge0,\gamma>0$. By \citep[Theorem~3]{DOvidio2014}, the solution to the fractional stochastic Cauchy problem \eqref{eq:fSCauchy}
is
\begin{equation}\label{eq:sol.fr.stoch.Cauchy}
  \solC(t,\PT{x})= \sum_{\ell=0}^{\infty}\sum_{m=-\ell}^{\ell} e^{-\freigv t}\Fcoe{(\RF_{0})}\shY(\PT{x}).
\end{equation}

\begin{remark}
A time-fractional extension of Cauchy problem \eqref{eq:fSCauchy} and its solution \eqref{eq:sol.fr.scalar.SPDE} was studied by D'Ovidio, Leonenko and Orsingher \cite{DOLeOr2016}. In this paper, we do not consider the initial condition from a time-fractional Cauchy problem.	
\end{remark}

For $t\ge0$, $\solC(t,\PT{x})$ is a $2$-weakly isotropic Gaussian random field, as shown by the following theorem.
\begin{proposition}\label{prop:isotr.sol.Cauchy}
	Let $\solC(t,\PT{x})$ be the solution in \eqref{eq:sol.fr.stoch.Cauchy} of the fractional stochastic Cauchy problem \eqref{eq:fSCauchy}. Then, for any $t\ge 0$, $\solC(t,\PT{x})$ is a $2$-weakly isotropic Gaussian random field on $\sph{2}$, and its Fourier coefficients satisfy for $\ell,\ell'\in \Nz, m=-\ell,\dots,\ell$ and $m'=-\ell',\dots,\ell'$,
	\begin{equation}\label{eq:Fcoe.sol.Cauchy.cov}
		\expect{\Fcoe{\solC(t)}\Fcoe[\ell' m']{\solC(t)}} = e^{-2\freigv t} \vTc\: \Kron\Kron[m m'],
	\end{equation}
	where we let $\Fcoe{\solC(t)}:=\Fcoe{(\solC(t))}$ for simplicity and $\vTc$ is given by \eqref{eq:vTc}.
\end{proposition}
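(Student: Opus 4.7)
The plan is to verify \eqref{eq:Fcoe.sol.Cauchy.cov} first by inner-producting the series \eqref{eq:sol.fr.stoch.Cauchy} with $\shY$, then deduce $2$-weak isotropy via the addition theorem, and finally confirm Gaussianity through a Gaussian-limit argument.

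\textbf{Step 1: Fourier coefficients.} Because $\{\shY\}$ is an orthonormal basis for $\Lp{2}{2}$ and the expansion \eqref{eq:sol.fr.stoch.Cauchy} converges in $\Lppsph{2}{2}$, inner-producting it termwise with $\shY$ gives $\Fcoe{\solC(t)}=e^{-\freigv t}\Fcoe{(\RF_0)}$. It follows that $\expect{\Fcoe{\solC(t)}\Fcoe[\ell' m']{\solC(t)}}$ equals the product of the two exponential decay factors (at indices $\ell$ and $\ell'$) multiplied by $\expect{\Fcoe{(\RF_0)}\Fcoe[\ell' m']{(\RF_0)}}$. By Lemma~\ref{lem:orth.Fcoe.RF}, the latter expectation equals $\vT\,\Kron\,\Kron[m m']$, so the Kronecker delta forces $\ell=\ell'$ and collapses the exponential prefactor to $e^{-2\freigv t}$; converting the complex-coefficient variance $\vT$ to $\vTc$ via \eqref{eq:vTc} then yields \eqref{eq:Fcoe.sol.Cauchy.cov}.

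\textbf{Step 2: Centering and isotropy.} Taking expectations termwise in \eqref{eq:sol.fr.stoch.Cauchy} and using $\expect{\Fcoe{(\RF_0)}}=0$ gives $\expect{\solC(t,\PT{x})}=0$ for all $\PT{x}\in\sph{2}$. Forming $\expect{\solC(t,\PT{x})\solC(t,\PT{y})}$ from \eqref{eq:sol.fr.stoch.Cauchy}, applying Step~1, and invoking the addition theorem \eqref{eq:addition.theorem} produces
\[
\expect{\solC(t,\PT{x})\solC(t,\PT{y})} = \sum_{\ell=0}^{\infty} e^{-2\freigv t}\APS (2\ell+1)\Legen{\ell}(\PT{x}\cdot\PT{y}),
\]
which depends only on $\PT{x}\cdot\PT{y}$ and is therefore rotationally invariant, establishing $2$-weak isotropy.

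\textbf{Step 3: Gaussianity.} For each $L\in\Nz$ the partial sum $\solC_L(t,\PT{x}):=\sum_{\ell=0}^{L}\sum_{m=-\ell}^{\ell}e^{-\freigv t}\Fcoe{(\RF_0)}\shY(\PT{x})$ is a finite linear combination of the jointly Gaussian family $\{\Fcoe{(\RF_0)}\}$, so every finite-dimensional marginal $(\solC_L(t,\PT{x}_1),\dots,\solC_L(t,\PT{x}_k))$ is multivariate Gaussian. Summability of $(2\ell+1)\APS$ (inherited from $\RF_0\in\Lppsph{2}{2}$) gives $\Lppsph{2}{2}$-convergence $\solC_L\to\solC(t,\cdot)$, so at every fixed point the partial sums converge in quadratic mean; since Gaussianity of finite-dimensional distributions is preserved under $L^2$-limits, $\solC(t,\cdot)$ is Gaussian.

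\textbf{Main obstacle.} The trickiest part is the complex/real bookkeeping: Lemma~\ref{lem:orth.Fcoe.RF} is stated with the angular power spectrum $\APS$, whereas the variance of the complex coefficient $\Fcoe{(\RF_0)}$ for $m\neq 0$ carries the extra factor $1/2$ encoded in \eqref{eq:vTc}, and the reality of $\RF_0$ forces the symmetry $\Fcoe{(\RF_0)}=(-1)^m\overline{\Fcoe[\ell,-m]{(\RF_0)}}$. Once this accounting is settled, everything else reduces to orthonormality and the addition theorem.
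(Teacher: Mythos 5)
Your argument is correct and follows essentially the same route as the paper's proof: both rest on the linear relation $\Fcoe{\solC(t)}=e^{-\freigv t}\Fcoe{(\RF_0)}$, Lemma~\ref{lem:orth.Fcoe.RF}, and the addition theorem \eqref{eq:addition.theorem} to exhibit the covariance as a zonal function. Your Step~3 supplies an explicit $L^2$-limit argument for Gaussianity that the paper simply asserts; this is a useful addition but not a different method.
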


\subsection{Solution of fractional SPDE}\label{sec:sol.fSPDE}
The following theorem gives the exact solution of the fractional SPDE in \eqref{eq:fSPDE}
under the random initial condition $\sol(0)=\solC(t_{0})$ for $t_{0}\ge0$.

\begin{theorem}\label{thm:sol.fr.SPDE.scalar}
    Let $\hurst\in[1/2,1)$, $\alpha\ge0$, $\gamma>0$. Let $\fBmsph(t)$ be a fractional Brownian motion on the sphere $\sph{2}$ with Hurst index $\hurst$ and variances $\vfBm$. Let $\solC(t,\PT{x})$ be the solution in \eqref{eq:sol.fr.stoch.Cauchy} of the fractional stochastic Cauchy problem \eqref{eq:fSCauchy}. Then, for $t_{0}\ge0$, the solution to the equation \eqref{eq:fSPDE} under the random initial condition $\sol(0) = \solC(t_{0})$ is for $t\ge0$,
\begin{align}\label{eq:sol.fr.scalar.SPDE}
  \sol(t) &=  \sum_{\ell=0}^{\infty}\biggl(\sum_{m=-\ell}^{\ell}e^{-\freigv (t+t_{0})}\Fcoe{(\RF_{0})}\shY \notag\\
  &\hspace{1.5cm}+ \sqrt{\vfBm}\Bigl(\int_{0}^{t}e^{-\freigv(t-u)}\IntBa[{\ell 0}](u)\:\shY[\ell,0]\notag\\
          &\hspace{3cm} +\sqrt{2}\sum_{m=1}^{\ell}\bigl(\int_{0}^{t}e^{-\freigv(t-u)}\IntBa(u) \:\CRe \shY \notag\\
          &\hspace{4.6cm}+ \int_{0}^{t}e^{-\freigv(t-u)}\IntBb(u) \:\CIm \shY\bigr)\Bigr)\biggr).
\end{align}
\end{theorem}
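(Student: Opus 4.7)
The strategy is to diagonalise the equation in the spherical harmonic basis, reduce the SPDE to a countable family of scalar Ornstein--Uhlenbeck equations driven by one-dimensional fBms, solve each by the variation of constants formula, and then reassemble using the real-valued expansion \eqref{eq:fBmsph.real.expan}.

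First, because $\{\shY\}$ is an orthonormal basis of $\Lp{2}{2}$ consisting of eigenfunctions of $\psi(-\LBo)$ with eigenvalues $\freigv$, I take Fourier coefficients of both sides of \eqref{eq:fSPDE}. Using the expansion of $\fBmsph$ in Definition~\ref{defn:fBmsph} and linearity of the stochastic integral (Definition~\ref{defn:IntD.fBmsph}), this yields for each $(\ell,m)$ the complex-valued scalar SDE
\begin{equation*}
  \IntD{\Fcoe{\sol(t)}} + \freigv \Fcoe{\sol(t)}\, \IntD{t} = \IntD{\fBm_{\ell m}(t)},
\end{equation*}
where $\Fcoe{\sol(t)}:=\InnerL{\sol(t),\shY}$. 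Proposition~\ref{prop:isotr.sol.Cauchy} together with \eqref{eq:sol.fr.stoch.Cauchy} gives the initial datum $\Fcoe{\sol(0)}=\Fcoe{\solC(t_{0})}=e^{-\freigv t_{0}}\Fcoe{(\RF_{0})}$.

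Second, each scalar equation is a linear (fractional) Langevin equation. Solving by the integrating factor $e^{\freigv t}$ (the variation of constants formula, which is justified pathwise for the Riemann--Stieltjes integral used here because $t\mapsto e^{\freigv t}$ is smooth and deterministic, and $\fBm_{\ell m}$ is $H$-Hölder for any $H<\hurst$, see \citep{Lin1995}), I obtain
\begin{equation*}
  \Fcoe{\sol(t)} = e^{-\freigv(t+t_{0})}\Fcoe{(\RF_{0})} + \int_{0}^{t}e^{-\freigv(t-u)}\IntD{\fBm_{\ell m}(u)}.
\end{equation*}
Substituting back into $\sol(t)=\sum_{\ell,m}\Fcoe{\sol(t)}\shY$ and converting to the real-valued basis via \eqref{eq:f.shY.real.expan} using the decomposition of $\fBm_{\ell m}$ into the real fBms $\BMa$ and $\BMb$ introduced just after Definition~\ref{defn:fBmsph} gives precisely formula \eqref{eq:sol.fr.scalar.SPDE}: the initial-condition term is a direct substitution, while the stochastic integrals split into the $\shY[\ell,0]$ part (with $\BMa[\ell 0]$) and the $\CRe\shY$, $\CIm\shY$ parts (with $\sqrt{2}\BMa$ and $\sqrt{2}\BMb$) because $\CRe\fBm_{\ell m}=\sqrt{\vfBm/2}\,\BMa$ and $\CIm\fBm_{\ell m}=\sqrt{\vfBm/2}\,\BMb$ in law for $m\ge 1$.

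Third, I must justify convergence of the double series in $\Lppsph{2}{2}$. The initial-condition piece converges by Lemma~\ref{lem:orth.Fcoe.RF} since $|e^{-\freigv(t+t_{0})}|\le 1$ and $\{\vT\}$ is summable against $(2\ell+1)$ (as $\RF_{0}\in\Lp{2}{2}$ almost surely). For the stochastic part, Parseval's identity together with Proposition~\ref{thm:BD.Int.fBmsph} applied to the bounded function $g(u)=e^{-\freigv(t-u)}$ bounds the tail of the expansion by a constant multiple of $\sum_{\ell\ge L}(2\ell+1)\vfBm \bigl(\int_{0}^{t}e^{-\freigv(t-u)/\hurst}\IntD{u}\bigr)^{2\hurst}\le t^{2\hurst}\sum_{\ell\ge L}(2\ell+1)\vfBm$, which vanishes as $L\to\infty$ by the summability hypothesis on $\vfBm$ in Definition~\ref{defn:fBmsph}.

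The main technical obstacle is justifying the variation of constants step when $\hurst>1/2$, since the driving noise is not a semimartingale; however, because the integrator $e^{\freigv s}$ is deterministic and smooth, the integration-by-parts identity $\int_{0}^{t}e^{\freigv u}\IntD{\fBm_{\ell m}(u)}=e^{\freigv t}\fBm_{\ell m}(t)-\freigv\int_{0}^{t}e^{\freigv u}\fBm_{\ell m}(u)\IntD{u}$ holds pathwise as a Riemann--Stieltjes identity, and combined with Proposition~\ref{thm:BD.Int.fBmsph} this suffices to verify that the candidate \eqref{eq:sol.fr.scalar.SPDE} satisfies \eqref{eq:fSPDE} in the appropriate mild/weak sense.
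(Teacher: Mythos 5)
Your proposal is correct and follows essentially the same route as the paper: diagonalise in the spherical harmonic basis, reduce to scalar fractional Ornstein--Uhlenbeck equations, solve pathwise by variation of constants, and reassemble via the real-valued expansion \eqref{eq:fBmsph.real.expan}. Your additional verification of the $\Lppsph{2}{2}$-convergence of the series and the pathwise justification of the integrating-factor step are left implicit in the paper (which simply cites \citep{ChKaMa2003,LaSc2015}), so these are welcome refinements rather than a different argument.
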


Let 
\begin{equation*}
	\igamma{a,z}:=\frac{1}{\Gamma(a)}\int_{0}^{1}t^{a-1}e^{-zt}\IntD{t}, \quad \CRe{a}>0,\; z\in\mathbb{C}
\end{equation*}
 be the incomplete gamma function, see e.g. \citep[Eq.~8.2.7]{NIST:DLMF}.

For $\ell\in\Nz$, $t\ge0$ and $\hurst\in[1/2,1)$, let
\begin{equation}\label{eq:var.int.fBm}
	\bigl(\GSD^{\hurst}\bigr)^{2} := \hurst \Gamma(2\hurst) t^{2\hurst} \left(e^{-2\freigv t}\:\igamma{2\hurst,-\freigv t} + \igamma{2\hurst,\freigv t}\right).
\end{equation}
We write $\GSD:=\GSD^{\hurst}$ and $\Gvar:=\bigl(\GSD^{\hurst}\bigr)^{2}$ if no confusion arises.

For $\hurst=1/2$, the formula \eqref{eq:var.int.fBm} reduces to
	\begin{equation}\label{eq:var.int.BM}
		\bigl(\GSD^{1/2}\bigr)^{2}=\left\{
    \begin{array}{ll}
    t, & \ell=0,\\[1mm]
    \displaystyle\frac{1-e^{-2\freigv t}}{2\freigv}, &\ell\ge1.
    \end{array}\right.
	\end{equation}

For $t\ge0$, the fractional stochastic integrals 
\begin{equation}\label{eq:fSI.0t}
	\int_{0}^{t}e^{-\freigv(t-u)}\IntB(u),\quad i=1,2,
\end{equation}
 in the expansion \eqref{eq:sol.fr.scalar.SPDE} in Theorem~\ref{thm:sol.fr.SPDE.scalar} are normally distributed with means zero and variances $\Gvar$, as a consequence of the following proposition.

\begin{proposition}\label{prop:Xt.coeff.fSI} Let $\hurst\in[1/2,1)$, $\alpha\ge0,\gamma>0$, and $\freigv$ be given by \eqref{eq:fr.eigvm}. Let $t>s\ge0$. For $m=0,\dots,\ell$, $\ell\in\Nz$ and $i=1,2$, each fractional stochastic integral
	\begin{equation}\label{eq:fBm.int}
	 \int_{s}^{t}e^{-\freigv(t-u)}\IntB(u)
	 \end{equation}
	is normally distributed with mean zero and variance $\Gvar[\ell,t-s]$ as given by \eqref{eq:var.int.fBm}.
	
	Moreover, for $1/2<\hurst<1$,
	\begin{align}\label{eq:Gvar.UB}
		\Gvar[\ell,t-s]
		&=\expect{\left|\int_{s}^{t}e^{-\freigv(t-u)}\IntB(u)\right|^{2}}\notag\\
		&=\norm{\int_{s}^{t}e^{-\freigv(t-u)}\IntB(u)}{\Lpprob{2}}^{2}\notag\\
		&\le C (t-s)^{2\hurst},
	\end{align}
	where the constant $C$ depends only on $\hurst$.
\end{proposition}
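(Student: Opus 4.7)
The plan is to establish, in order, (i) Gaussianity and zero mean, (ii) the explicit variance formula \eqref{eq:var.int.fBm}, and (iii) the upper bound \eqref{eq:Gvar.UB}. For (i) I would appeal to the Riemann--Stieltjes construction recalled just above Definition~\ref{defn:IntD.fBmsph}: the integral $\int_{s}^{t} e^{-\freigv(t-u)}\IntB(u)$ is the $L_{2}(\probSp)$-limit of Riemann sums whose coefficients $e^{-\freigv(t-u_{k})}$ are deterministic and whose summands are zero-mean Gaussian increments of $\BMa$ or $\BMb$. Hence each such sum is centered Gaussian, and so is its mean-square limit.

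For (ii) with $\hurst \in (1/2, 1)$, I would invoke the standard $L^{2}$-isometry for scalar fractional stochastic integrals (see, e.g., \citep{BiHuOkZh2008}),
\begin{equation*}
\expect{\left|\int_{s}^{t} g(u)\IntB(u)\right|^{2}}
= \hurst(2\hurst-1)\int_{s}^{t}\!\int_{s}^{t} g(u)g(v)|u-v|^{2\hurst-2}\IntD{u}\IntD{v},
\end{equation*}
applied to $g(u)=e^{-\freigv(t-u)}$. Setting $\tau := t-s$ and reflecting $u,v \mapsto t-u, t-v$ converts the right-hand side to $\hurst(2\hurst-1)\int_{0}^{\tau}\!\int_{0}^{\tau} e^{-\freigv(u+v)}|u-v|^{2\hurst-2}\IntD{u}\IntD{v}$. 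Using symmetry in $u,v$, the change of variable $z = u-v$ in the inner integral, and Fubini, this reduces to $\freigv^{-1}\int_{0}^{\tau} z^{2\hurst-2}\bigl(e^{-\freigv z} - e^{-2\freigv \tau} e^{\freigv z}\bigr)\IntD{z}$. A single integration by parts in $z$ then converts this into
\begin{equation*}
\Gvar[\ell,\tau] = \hurst\left[\int_{0}^{\tau} z^{2\hurst-1} e^{-\freigv z}\IntD{z} + e^{-2\freigv \tau}\int_{0}^{\tau} z^{2\hurst-1} e^{\freigv z}\IntD{z}\right],
\end{equation*}
and the scaling $z = \tau r$ rewrites the two integrals as $\tau^{2\hurst}\Gamma(2\hurst)\igamma{2\hurst, \pm \freigv \tau}$, yielding \eqref{eq:var.int.fBm}. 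For $\hurst = 1/2$ the classical Itô isometry replaces the above and produces $\int_{s}^{t} e^{-2\freigv(t-u)}\IntD{u}$, which evaluates directly to the piecewise formula \eqref{eq:var.int.BM} (with the case $\ell=0$ handled by $\freigv=0$).

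For (iii), since $\freigv \ge 0$ and $t-u \ge 0$ on $[s,t]$, bounding $e^{-\freigv(t-u)} \le 1$ in the isometry representation gives
\begin{equation*}
\Gvar[\ell, t-s] \le \hurst(2\hurst-1)\int_{s}^{t}\!\int_{s}^{t} |u-v|^{2\hurst-2}\IntD{u}\IntD{v} = (t-s)^{2\hurst},
\end{equation*}
by a short elementary computation, so $C = 1$ suffices. The main obstacle will be the integration by parts in (ii): since the kernel $z^{2\hurst-2}$ is weakly singular at $z=0$ (because $2\hurst-2 \in (-1,0)$), one must verify carefully that the boundary contributions at $z=\tau$ from the two terms cancel exactly thanks to the factor $e^{\pm \freigv \tau}$, while the boundary contribution at $z=0$ vanishes by $2\hurst-1 > 0$. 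Everything else in the argument is routine calculus.
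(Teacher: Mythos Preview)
Your plan is correct and largely parallels the paper's argument. For (i) the paper simply cites \citep{PiTa2000}; your Riemann--sum justification is equivalent. For (ii) with $\hurst>1/2$ the paper starts from the same $L_{2}$ isometry (citing \citep{MeMiVa2001}) but reaches the one-dimensional integrals via the rotation $(x,y)=(u+v,u-v)$ and a domain split, rather than your route of reflection to $[0,\tau]^{2}$, Fubini in $(u,z)$, and an integration by parts; both land on
\[
\hurst\Bigl(e^{-2\freigv\tau}\int_{0}^{\tau}z^{2\hurst-1}e^{\freigv z}\IntD{z}+\int_{0}^{\tau}z^{2\hurst-1}e^{-\freigv z}\IntD{z}\Bigr),
\]
after which the scaling $z=\tau r$ produces \eqref{eq:var.int.fBm}. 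One small caveat on your route: your intermediate step carries an explicit factor $\freigv^{-1}$, so the case $\freigv=0$ (i.e.\ $\ell=0$ when $\alpha>0$) must be handled separately; the paper's substitution avoids this division. For (iii) the paper does not bound the integrand directly but instead invokes the moment inequality of \citep[Theorem~1.1]{MeMiVa2001} (Proposition~\ref{thm:BD.Int.fBmsph} here), obtaining $C_{\hurst}\bigl(\int_{s}^{t}e^{-\freigv(t-u)/\hurst}\IntD{u}\bigr)^{2\hurst}\le C_{\hurst}(t-s)^{2\hurst}$; your elementary bound $e^{-\freigv(t-u)}\le1$ inside the double integral is shorter and yields the explicit constant $C=1$.
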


\begin{remark}
By the expansion of solution $\sol(t,\PT{x})$ in \eqref{eq:sol.fr.scalar.SPDE} of \eqref{eq:fSPDE} under the initial condition $\sol(0,\PT{x})=\RF_{0}(\PT{x})$ and the distribution coefficients of the expansion given by Proposition~\ref{prop:Xt.coeff.fSI}, the covariance of $\sol(t,\PT{x})$ can be obtained using the techniques of Proposition~\ref{prop:Gvar.diff.t.fBm} and \eqref{eq:cov.fBm}.

The solution of a Cauchy problem defined by the Riesz-Bessel operator yields an $\alpha$-stable type solution, which is non-Gaussian. But, when we bring the fBm noise into the model, Theorem~\ref{thm:trsol.err} shows that convergence is dominated by fBm, and under the condition of this theorem, the fBm driving the equation has a continuous version, hence the solution $\sol(t)$ swings, but would not jump. It is not safe to assume that the solution is non-Gaussian (jumpy).
\end{remark}

The following proposition shows the change rate of the variance of fractional stochastic integral \eqref{eq:fSI.0t} with respect to time.

\begin{proposition}\label{prop:Gvar.diff.t.fBm}
	Let $\hurst\in[1/2,1)$, $\alpha\ge0,\gamma>0$, and $\freigv$ be given by \eqref{eq:fr.eigvm}. For $t\ge0$, $m=0,\dots,\ell$, $\ell\in\Nz$ and $i=1,2$, the variance $\Gvar[\ell,t]$ of the fractional stochastic integral
	\eqref{eq:fSI.0t}
	satisfies as $h\to0+$:\\
(i) for $\hurst=1/2$,
when $t=0$,
	\begin{equation*}
		\left|\GSD[\ell,t+h]-\GSD[\ell,t]\right|\le h^{1/2},
	\end{equation*}
when $t>0$,
	\begin{equation*}
		\left|\GSD[\ell,t+h]-\GSD[\ell,t]\right|\le C_{1} h,
	\end{equation*}
where the constant 
\begin{equation*}
C_{1}:=\left\{
	\begin{array}{ll}
		\displaystyle\frac{1}{2\sqrt{t}}, & \ell=0,\\[5mm]
		\displaystyle\sqrt{\frac{\freigv}{2(1-e^{-2\freigv t})}}\:e^{-2\freigv t}, & \ell\ge1;
	\end{array}
\right.
\end{equation*}
(ii) for $\hurst\in (1/2,1)$,
	\begin{equation*}
		\left|\GSD[\ell,t+h]-\GSD[\ell,t]\right|\le C_{2} \left(1+ \freigv t^{\hurst} h^{1-\hurst}\right)h^{\hurst},
	\end{equation*}
where the constant $C_{2}$ depends only on $\hurst$.
\end{proposition}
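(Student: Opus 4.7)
The plan is to treat the Brownian-motion case $\hurst=1/2$ by direct calculation from the closed form \eqref{eq:var.int.BM}, and the genuinely fractional case $\hurst\in(1/2,1)$ by combining the reverse triangle inequality in $\Lpprob{2}$ with the variance estimate \eqref{eq:Gvar.UB} from Proposition~\ref{prop:Xt.coeff.fSI}. The latter bypasses the unwieldy incomplete-gamma-function expression for $\Gvar[\ell,t]$ by exploiting a natural splitting of the integral.

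For part~(i), the subcase $\ell=0$ uses $\GSD[0,t]=\sqrt{t}$: the $t=0$ bound is immediate, and for $t>0$ I would rationalize $\sqrt{t+h}-\sqrt{t}=h/(\sqrt{t+h}+\sqrt{t})\le h/(2\sqrt{t})$. For $\ell\ge 1$ and $t=0$, the elementary inequality $1-e^{-2\freigv h}\le 2\freigv h$ immediately gives $\GSD[\ell,h]\le h^{1/2}$. For $t>0$ and $\ell\ge 1$, I would differentiate $\GSD[\ell,\cdot]=\sqrt{(1-e^{-2\freigv\cdot})/(2\freigv)}$ to obtain
\begin{equation*}
\frac{d}{dt}\GSD[\ell,t]=\sqrt{\frac{\freigv}{2(1-e^{-2\freigv t})}}\,e^{-2\freigv t},
\end{equation*}
verify that both factors decrease in $t$, so that this derivative is monotonically decreasing, and invoke the mean value theorem on $[t,t+h]$ to bound the increment by $C_{1}\,h$ with $C_{1}$ the derivative at $t$.

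For part~(ii) with $\hurst\in(1/2,1)$, set $X_{t}:=\int_{0}^{t}e^{-\freigv(t-u)}\IntB(u)$. The key algebraic step, valid because the integrand is deterministic, is the splitting
\begin{equation*}
X_{t+h}=e^{-\freigv h}X_{t}+\int_{t}^{t+h}e^{-\freigv(t+h-u)}\IntB(u).
\end{equation*}
The reverse triangle inequality together with the ordinary triangle inequality in $\Lpprob{2}$ then gives
\begin{equation*}
|\GSD[\ell,t+h]-\GSD[\ell,t]|\le(1-e^{-\freigv h})\,\GSD[\ell,t]+\left\|\int_{t}^{t+h}e^{-\freigv(t+h-u)}\IntB(u)\right\|_{\Lpprob{2}}.
\end{equation*}
Using $1-e^{-\freigv h}\le\freigv h$, the estimate $\GSD[\ell,t]\le\sqrt{C}\,t^{\hurst}$ from \eqref{eq:Gvar.UB}, and the same estimate applied to the tail integral (whose squared $\Lpprob{2}$-norm equals $\Gvar[\ell,h]\le C h^{2\hurst}$), the right-hand side is bounded by $\sqrt{C}(\freigv h\,t^{\hurst}+h^{\hurst})=\sqrt{C}\,h^{\hurst}(1+\freigv t^{\hurst} h^{1-\hurst})$, which is the claimed bound with $C_{2}=\sqrt{C}$.

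The main technical obstacle lies in part~(ii): for $\hurst>1/2$ the two summands in the splitting of $X_{t+h}$ are correlated, since fBm increments are not independent, so their variances do not simply add and a direct calculation of $\|X_{t+h}-X_{t}\|_{\Lpprob{2}}^{2}$ would reintroduce intractable incomplete-gamma expressions. The reverse triangle inequality sidesteps this dependence and reduces the estimate to two elementary $\Lpprob{2}$-norm bounds already furnished by Proposition~\ref{prop:Xt.coeff.fSI}.
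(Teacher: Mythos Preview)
Your proposal is correct and follows essentially the same route as the paper: direct computation from the closed form \eqref{eq:var.int.BM} for $\hurst=1/2$, and for $\hurst\in(1/2,1)$ the same splitting $X_{t+h}=e^{-\freigv h}X_t+\int_t^{t+h}e^{-\freigv(t+h-u)}\IntB(u)$ followed by the reverse triangle inequality and the variance bound \eqref{eq:Gvar.UB}. The only cosmetic differences are that the paper phrases the elementary bounds $1-e^{-x}\le x$ and $\sqrt{t+h}-\sqrt{t}\le h/(2\sqrt t)$ as applications of the mean value theorem, whereas you invoke them directly.
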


Proposition~\ref{prop:Gvar.diff.t.fBm} implies the following common upper bound for all $\hurst\in[1/2,1)$.
\begin{corollary}\label{corol:Gvar.diff.t.fBm}
	Let $\hurst\in[1/2,1)$, $\alpha\ge0,\gamma>0$, and $\freigv$ be given by \eqref{eq:fr.eigvm}. For $t\ge0$, $m=0,\dots,\ell$, $\ell\in\Nz$ and $i=1,2$, the variance $\Gvar[\ell,t]$ of the fractional stochastic integral \eqref{eq:fSI.0t} satisfies as $h\to0+$,
	\begin{equation*}
		\left|\GSD[\ell,t+h]-\GSD[\ell,t]\right|\le C (1+\freigv) h^{\hurst},
	\end{equation*}
where the constant $C$ depends only on $\hurst$, $\alpha$, $\gamma$ and $t$.
\end{corollary}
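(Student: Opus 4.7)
The statement is a direct consolidation of the two cases treated in Proposition~\ref{prop:Gvar.diff.t.fBm}, so the plan is to handle each regime separately and then absorb all parameters into a single $(1+\freigv)h^{\hurst}$ bound, with any residual $t,\alpha,\gamma$ dependence swept into the constant $C$.

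First, for $\hurst\in(1/2,1)$ I would start from the ready-made estimate
\begin{equation*}
	|\GSD[\ell,t+h]-\GSD[\ell,t]|\le C_{2}\bigl(1+\freigv t^{\hurst}h^{1-\hurst}\bigr)h^{\hurst}
\end{equation*}
of Proposition~\ref{prop:Gvar.diff.t.fBm}(ii). Since we are working as $h\to 0+$, we may restrict to $h\le 1$, so $h^{1-\hurst}\le 1$ and $1+\freigv t^{\hurst}h^{1-\hurst}\le 1+\freigv t^{\hurst}\le (1+t^{\hurst})(1+\freigv)$. This yields the required bound with constant $C_{2}(1+t^{\hurst})$, which depends only on $\hurst$ and $t$ as claimed.

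Second, for $\hurst=1/2$ the case $t=0$ already gives $h^{1/2}=h^{\hurst}\le (1+\freigv)h^{\hurst}$ because $1+\freigv\ge 1$. For $t>0$ I would show that the constant $C_{1}$ appearing in Proposition~\ref{prop:Gvar.diff.t.fBm}(i) is uniformly bounded in $\ell$ by a constant depending only on $t$: for $\ell=0$ one has $C_{1}=1/(2\sqrt{t})$ directly, while for $\ell\ge 1$ the elementary inequality $1-e^{-x}\ge x/2$ on $x\in[0,1]$ combined with the trivial bound $1-e^{-2\freigv t}\ge 1-e^{-2}$ when $\freigv t\ge 1$ gives
\begin{equation*}
	C_{1}=e^{-2\freigv t}\sqrt{\frac{\freigv}{2(1-e^{-2\freigv t})}}\le \max\Bigl\{\tfrac{1}{\sqrt{2t}},\;\sup_{x\ge 1}\tfrac{\sqrt{x/(2t)}\,e^{-2x}}{\sqrt{1-e^{-2}}}\Bigr\}=:\widetilde{C}(t),
\end{equation*}
the second quantity being finite because $\sqrt{x}e^{-2x}$ decays. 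Then for $h\le 1$, $C_{1}h\le \widetilde{C}(t)h\le \widetilde{C}(t)h^{1/2}\le \widetilde{C}(t)(1+\freigv)h^{\hurst}$, as desired.

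Finally, combining the two regimes and taking $C$ to be the maximum of the $\hurst$-, $t$-, $\alpha$-, $\gamma$-dependent constants produced above yields the stated uniform inequality. The only mildly delicate step is the uniform-in-$\ell$ control of $C_{1}$ for $\hurst=1/2$; once that is in place the rest is just absorbing constants, so I do not anticipate real obstacles beyond this bookkeeping.
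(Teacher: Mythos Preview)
Your proposal is correct and follows exactly the route the paper intends: the paper does not give a separate proof of this corollary but simply states that it is implied by Proposition~\ref{prop:Gvar.diff.t.fBm}, and your argument is precisely the bookkeeping needed to extract the common $(1+\freigv)h^{\hurst}$ bound from the two cases of that proposition. The only minor point is that your case split for $\hurst=1/2$, $t>0$ leaves the range $\freigv t\in(1/2,1)$ uncovered; this is harmless since $(1-e^{-2x})/x$ is bounded below on $(0,1]$ (e.g.\ by $1-e^{-2}$), which suffices to make the uniform-in-$\ell$ bound on $C_{1}$ go through.
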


\subsection{Approximation to the solution}\label{sec:approx.sol}
In this section, we truncate the Karhunen-Lo\`{e}ve expansion of the solution $\sol(t)$ in \eqref{eq:sol.fr.scalar.SPDE} of the fractional SPDE \eqref{eq:fSPDE} for computational implementation. We give an estimate for the approximation error of the truncated expansion. We also derive an upper bound for the mean square approximation errors in time for the solution $\sol(t)$. 

\subsubsection{Truncation approximation to Karhunen-Lo\`{e}ve expansion}
\begin{definition}\label{defn:KL.approx}
For $t\ge0$ and $\trdeg\in\Nz$, the Karhunen-Lo\`{e}ve approximation $\trsol(t)$ of (truncation) degree $\trdeg$ to the solution $\sol(t)$ is
\begin{align}\label{eq:trsol.fr.scalar.SPDE}
  \trsol(t) &= \sum_{\ell=0}^{\trdeg}\biggl(\sum_{m=-\ell}^{\ell}e^{-\freigv t}\InnerL{\solC(t_{0}),\shY}\shY\notag\\
  &\hspace{1.5cm} + \sqrt{\vfBm}\Bigl(\int_{0}^{t}e^{-\freigv(t-u)}\IntBa[{\ell 0}](u)\:\shY[\ell,0]\notag\\
          &\hspace{3cm} +\sqrt{2}\sum_{m=1}^{\ell}\bigl(\int_{0}^{t}e^{-\freigv(t-u)}\IntBa(u) \:\CRe \shY\notag\\
          &\hspace{4.6cm} + \int_{0}^{t}e^{-\freigv(t-u)}\IntBb(u) \:\CIm \shY\bigr)\Bigr)\biggr).
\end{align}
\end{definition}

	For $\ell\in\Nz$, let
	\begin{subequations}\label{eq:trsol}
	\begin{align}
		\trsola(t) &:=\sum_{m=-\ell}^{\ell}e^{-\freigv t}\InnerL{\solC(t_{0}),\shY}\shY,\label{eq:trsol.a}\\
		\trsolb(t) &:=\sqrt{\vfBm}\Bigl(\int_{0}^{t}e^{-\freigv(t-u)}\IntBa[{\ell 0}](u)\:\shY[\ell,0]\notag\\
          &\hspace{1.7cm} +\sqrt{2}\sum_{m=1}^{\ell}\bigl(\int_{0}^{t}e^{-\freigv(t-u)}\IntBa(u) \:\CRe \shY \notag\\
          &\hspace{3.6cm}+ \int_{0}^{t}e^{-\freigv(t-u)}\IntBb(u) \:\CIm \shY\bigr)\Bigr)\label{eq:trsol.b}.
	\end{align}	
	\end{subequations}
	By \citep[Remark~6.13]{MaPe2011}, $\trsola(t)$ and $\trsolb(t)$, $t\ge0$, $\ell\in\Nz$, are centered Gaussian random fields. 
	
	The following theorem shows that the convergence rate of the Karhunen-Lo\`{e}ve approximation in \eqref{eq:trsol.fr.scalar.SPDE} of the exact solution in \eqref{eq:sol.fr.scalar.SPDE} is determined by the convergence rate of variances $\vfBm$ of the fBm $\fBmsph(t)$ (with respect to $\ell$).
	
\begin{theorem}\label{thm:trsol.err}
 Let $\sol(t)$ be the solution \eqref{eq:sol.fr.scalar.SPDE} to the fractional SPDE in \eqref{eq:fSPDE} with $\sol(0)=\solC(t_{0})$, $t_{0}\ge0$, and the fBm $\fBmsph(t)$ whose variances $\vfBm$ satisfy $\sum_{\ell=0}^{\infty} \vfBm(1+\ell)^{2\smind+1} <\infty$ for $r>1$. Let $L\ge1$ and let $\trsol(t)$ be the Karhunen-Lo\`{e}ve approximation of $\sol(t)$ given in \eqref{eq:trsol.fr.scalar.SPDE}. For $t>0$, the truncation error
	\begin{equation}\label{eq:sol.tr.err}
		\norm{\sol(t)-\trsol(t)}{\Lppsph{2}{2}} \le C \trdeg^{-\smind},
	\end{equation}
	where the constant $C$ depends only on $\alpha$, $\gamma$, $t_{0}$, $t$ and $\smind$.
\end{theorem}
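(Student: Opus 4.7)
The plan is to decompose the truncation error as $\sol(t) - \trsol(t) = \sum_{\ell = L+1}^{\infty}\bigl(\trsola(t) + \trsolb(t)\bigr)$, bound each tail in the $\Lppsph{2}{2}$-norm separately, and combine them. Since $\RF_0$ is independent of the driving fBms and both tails are centered, the cross-expectation between them vanishes; Parseval's identity on $\Lp{2}{2}$ together with Lemma~\ref{lem:orth.Fcoe.RF} and the orthogonality of the fBm coefficients across $(\ell,m)$ reduces the problem to two scalar sums of variances.

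For the initial-condition tail, Proposition~\ref{prop:isotr.sol.Cauchy} gives that the Fourier coefficient of $\solC(t_0)$ at $(\ell,m)$ equals $e^{-\freigv t_0}\Fcoe{(\RF_0)}$ with variance $e^{-2\freigv t_0}\vTc$. Using the $L_2(\sph{2})$-orthonormality of $\shY$,
\begin{equation*}
	\expect{\norm{\sum_{\ell=L+1}^{\infty}\trsola(t)}{\Lp{2}{2}}^{2}} = \sum_{\ell=L+1}^{\infty}(2\ell+1)\,e^{-2\freigv(t+t_0)}\vT.
\end{equation*}
Since $\freigv \asymp (1+\ell)^{\alpha+\gamma}$ with $\alpha+\gamma > 0$ by \eqref{eq:feigv.est} and $t > 0$, the exponential factor decays faster than any polynomial in $\ell$; noting that $(2\ell+1)\vT$ is bounded (inherited from well-definedness of $\RF_0$), this tail is controlled by $C\,e^{-c(1+L)^{\alpha+\gamma}(t+t_0)}$, which is $o(\trdeg^{-\smind})$ for every $\smind>0$.

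For the fBm tail, I would pass to the real orthonormal basis of each degree-$\ell$ harmonic subspace (namely $\shY[\ell,0]$ together with $\sqrt{2}\CRe\shY$, $\sqrt{2}\CIm\shY$ for $m=1,\dots,\ell$), in which the coefficients of $\trsolb(t)$ are exactly the scalar fractional stochastic integrals of Proposition~\ref{prop:Xt.coeff.fSI}. By the independence of the $\BMa,\BMb$ across $(\ell,m)$ and Parseval,
\begin{equation*}
	\expect{\norm{\sum_{\ell=L+1}^{\infty}\trsolb(t)}{\Lp{2}{2}}^{2}} = \sum_{\ell=L+1}^{\infty}(2\ell+1)\,\vfBm\,\Gvar[\ell,t] \le C\,t^{2\hurst}\sum_{\ell=L+1}^{\infty}(2\ell+1)\vfBm,
\end{equation*}
where the inequality uses $\Gvar[\ell,t]\le C\,t^{2\hurst}$ from Proposition~\ref{prop:Xt.coeff.fSI} (covering $\hurst=1/2$ via \eqref{eq:var.int.BM}). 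For $\ell > L$ one has $(2\ell+1)\,\vfBm \le 3(1+\ell)^{2\smind+1}\vfBm\,(1+L)^{-2\smind}$, so the smoothness hypothesis $\sum_{\ell}(1+\ell)^{2\smind+1}\vfBm < \infty$ yields the tail bound $C\,\trdeg^{-2\smind}$. Combining with the initial-condition estimate and taking square roots gives \eqref{eq:sol.tr.err}.

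The main obstacle is bookkeeping rather than depth: keeping track of the real versus complex bases when extracting the combinatorial factor $(2\ell+1)$ from the sum over $m$, verifying that cross-terms between $\trsola$ and $\trsolb$ vanish via the assumed independence of $\RF_0$ and the fBm coefficients, and absorbing constants depending on $t,t_0,\alpha,\gamma,\hurst$ into a single $C = C(\alpha,\gamma,t_0,t,\smind)$. No new analytic input is required beyond the results already established in Section~\ref{sec:fBm} and Section~\ref{sec:fSCauchy}.
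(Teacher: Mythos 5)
Your proposal is correct, and for the dominant (fBm) term it takes a genuinely more elementary route than the paper. The paper uses the same decomposition into $\sola(t)=\sum_\ell\trsola(t)$ and $\solb(t)=\sum_\ell\trsolb(t)$ and handles the initial-condition tail exactly as you do (the super-polynomial decay of $e^{-\freigv[{\eigvm[\trdeg]}]t}$ for $t>0$ makes that term $o(\trdeg^{-\smind})$). For the fBm tail, however, the paper does not compute the second moment directly: it first identifies $\solb(t)$ as a $2$-weakly isotropic Gaussian field with angular power spectrum $\{\vfBm\Gvar\}_{\ell\in\Nz}$, checks $\sum_\ell \vfBm\Gvar\,\ell^{2\smind+1}<\infty$ via \eqref{eq:Gvar.UB} and \eqref{eq:var.int.BM}, invokes \citep[Corollary~4.4]{LeSlWaWo2017} to conclude $\solb(t)\in\sob{2}{\smind}{2}$ almost surely, and then applies the general truncation theorem \citep[Proposition~5.2]{LaSc2015} for isotropic Gaussian fields in Sobolev spaces. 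Your argument replaces these two external results by Parseval's identity in the real orthonormal basis of each harmonic subspace, the uniform bound $\Gvar[\ell,t]\le C t^{2\hurst}$ (correctly covering $\hurst=1/2$ via \eqref{eq:var.int.BM}), and the elementary tail estimate $(2\ell+1)\vfBm\le 3(1+\ell)^{2\smind+1}\vfBm(1+\trdeg)^{-2\smind}$ for $\ell>\trdeg$. What the paper's route buys is an identification of the constant as $C_\smind\sqrt{\var{\normb{\solb(t)}{\sob{2}{\smind}{2}}}}$ and a conceptual link to the regularity theory of isotropic fields; what yours buys is self-containedness and a fully explicit constant $C\,t^{\hurst}\bigl(\sum_\ell(1+\ell)^{2\smind+1}\vfBm\bigr)^{1/2}$. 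Your bookkeeping points (the $(2\ell+1)$ count from the real basis, the vanishing or triangle-inequality treatment of cross-terms, the boundedness of $(2\ell+1)\vT$ from the well-definedness of $\RF_{0}$) are all sound.
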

\begin{remark} Given $t_{1}>0$, the truncation error in \eqref{eq:sol.tr.err} is uniformly bounded on $[t_{1},+\infty)$:
	\begin{equation*}
		\sup_{t\ge t_{1}}\norm{\sol(t)-\trsol(t)}{\Lppsph{2}{2}} \le C \trdeg^{-\smind},
	\end{equation*}
	where the constant $C$ depends only on $\alpha$, $\gamma$, $t_{0}$, $t_{1}$ and $\smind$.
	
	The condition $r>1$ comes from the Sobolev embedding theorem (into the space of continuous functions) on $\sph{2}$, see \citep{Kamzolov1982}. This implies that the random field $\fBmsph(t,\cdot)$, $t\ge0$, has a representation by a continuous function on $\sph{2}$ almost surely, which allows numerical computations to proceed, see \citep{AnLa2014,LaSc2015}.
\end{remark}

\subsubsection{Mean square approximation errors in time}
For $t\ge0$, let $\{(\incsol[\ell m]^{1}(t),\incsol[\ell m]^{2}(t))| m=0,\dots,\ell, \ell\in\Nz\}$ be a sequence of independent and standard normally distributed random variables. Let
\begin{equation}\label{eq:U.l}
	\incsol(t):= \incsola[\ell 0](t) \:\shY[\ell 0] + \sqrt{2} \sum_{m=1}^{\ell}\bigl(\incsola(t)\:\CRe\shY
	+ \incsolb(t)\:\CIm\shY\bigr).
\end{equation}
This is a Gaussian random field and the series $\sum_{\ell=0}^{\infty}\incsol(t)$ converges to a Gaussian random field on $\sph{2}$ (in $\Lppsph{2}{2}$ sense), see \citep[Remark~6.13 and Theorem~5.13]{MaPe2011}. Let $\incsol[](t):=\sum_{\ell=0}^{\infty}\incsol(t)$. By Lemma~\ref{lem:orth.Fcoe.RF},
\begin{equation}\label{eq:Fcoe.incsol.cov}
	\expect{\Fcoe{\incsol[](t)}\Fcoe[\ell' m']{\incsol[](t)}} = \Kron\Kron[m m'],
\end{equation}
where we let $\Fcoe{\incsol[](t)}:=\Fcoe{(\incsol[](t))}$ for brevity.

Let $\trsola(t)$ and $\trsolb(t)$ be given in \eqref{eq:trsol.a} and \eqref{eq:trsol.b} respectively. We let
\begin{equation}\label{eq:trsol.ell}
	\trsol[\ell](t):=\trsola(t)+\trsolb(t).
\end{equation}

For $t\ge0$ and $h>0$, the following theorem shows that $\trsol[\ell](t+h)$ can be represented by $\trsol[\ell](t)$ and $\incsol(t)$.
\begin{lemma}\label{lem:sol.represent}
	Let $\hurst\in[1/2,1)$ and $\ell\in\Nz$. Let $\trsol[\ell](t)$ be given by \eqref{eq:trsol.ell}. Then, for $t\ge0$ and $h>0$,
	\begin{equation}
		\trsol[\ell](t+h) = e^{-\freigv h} \trsol[\ell](t) + \sqrt{\vfBm}\:\GSD[\ell,h] \incsol(t),\label{eq:variat.sol.time}		
	\end{equation}
	where $\GSD[\ell,h]:=\GSD[\ell,h]^{\hurst}$ is given by \eqref{eq:var.int.fBm} and $\incsol(t)$ is given by \eqref{eq:U.l}.
\end{lemma}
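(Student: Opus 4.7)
The plan is to split each stochastic integral that defines $\trsol[\ell](t+h)$ at time $t$ and factor out the decay $e^{-\freigv h}$. Writing $\trsol[\ell](t+h) = \trsola(t+h) + \trsolb(t+h)$, I would treat the two parts separately. For $\trsola$, every summand carries a factor $e^{-\freigv(t+h)} = e^{-\freigv h}\,e^{-\freigv t}$, so $\trsola(t+h) = e^{-\freigv h}\,\trsola(t)$ follows immediately from definition \eqref{eq:trsol.a}.

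For $\trsolb(t+h)$, I would use additivity of the Riemann-Stieltjes/fractional integral to split $\int_0^{t+h} = \int_0^t + \int_t^{t+h}$; this is valid since the deterministic integrand $e^{-\freigv(t+h-u)}$ is smooth and bounded while the fBm integrator has enough path regularity for $\hurst\in[1/2,1)$, cf.~\citep{Lin1995}. Combining this with $e^{-\freigv(t+h-u)} = e^{-\freigv h}\,e^{-\freigv(t-u)}$ on $[0,t]$, the $[0,t]$ contribution equals $e^{-\freigv h}\,\trsolb(t)$, and the remainder is the linear combination of tail integrals $\int_t^{t+h}e^{-\freigv(t+h-u)}\,\IntBa(u)$ and $\int_t^{t+h}e^{-\freigv(t+h-u)}\,\IntBb(u)$ weighted by $\sqrt{\vfBm}$ against $\shY[\ell,0]$, $\sqrt{2}\CRe\shY$ and $\sqrt{2}\CIm\shY$, in direct parallel with \eqref{eq:U.l}.

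The next step is to identify this remainder with $\sqrt{\vfBm}\,\GSD[\ell,h]\,\incsol(t)$. By Proposition~\ref{prop:Xt.coeff.fSI}, each tail integral is centered Gaussian with variance $\Gvar[\ell,h]$, so setting
\begin{equation*}
\incsola(t) := \frac{1}{\GSD[\ell,h]}\int_t^{t+h}e^{-\freigv(t+h-u)}\,\IntBa(u), \qquad \incsolb(t) := \frac{1}{\GSD[\ell,h]}\int_t^{t+h}e^{-\freigv(t+h-u)}\,\IntBb(u),
\end{equation*}
produces standard normals. Independence across $(\ell, m, i)$ is inherited from the mutual independence of the scalar fBms $\BMa$ and $\BMb$ postulated in Definition~\ref{defn:fBmsph}, so that the family $\{(\incsola(t),\incsolb(t))\}$ has the joint distribution required in the line just above \eqref{eq:U.l}. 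Substituting these back, the remainder matches $\sqrt{\vfBm}\,\GSD[\ell,h]\,\incsol(t)$ term by term, and adding the $\trsola$ identity yields \eqref{eq:variat.sol.time}.

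The only genuine subtlety is the bookkeeping that guarantees the normalised tail variables form an independent standard-normal family matching the structure of \eqref{eq:U.l}; this is secured by the independence of the component fBms $\BMa,\BMb$ and by disjointness of the underlying integrators used for different $(\ell, m, i)$. Everything else is a direct consequence of the linearity of the fractional stochastic integral and the exponential splitting $e^{-\freigv(t+h-u)}=e^{-\freigv h}e^{-\freigv(t-u)}$.
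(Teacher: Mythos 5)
Your proposal is correct and follows essentially the same route as the paper: split each stochastic integral at time $t$, factor out $e^{-\freigv h}$ from the $[0,t]$ portion (and from the deterministic initial-condition part), identify the tail integrals over $[t,t+h]$ as centered Gaussians with variance $\Gvar[\ell,h]$ via Proposition~\ref{prop:Xt.coeff.fSI}, and normalise them to obtain the independent standard normals entering \eqref{eq:U.l}. Your explicit attention to the independence bookkeeping across $(\ell,m,i)$ is a point the paper passes over more quickly, but the argument is the same.
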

\begin{remark} In particular, the equation \eqref{eq:variat.sol.time} implies for $\ell\in\Nz$ and $t>0$,
\begin{equation}\label{eq:variat.sol.0}
  \trsol[\ell](t) = e^{-\freigv t} \trsol[\ell](0) + \sqrt{\vfBm}\:\GSD \incsol(0).
\end{equation}
\end{remark}

The following theorem gives an estimate for the mean square approximation errors for $\sol(t)$ in time, which depends on the Hurst index $\hurst$ of the fBm $\fBmsph(t)$.
\begin{theorem}\label{thm:UB.fBm.L2err.sol}
	Let $\sol(t)$ be the solution in \eqref{eq:sol.fr.scalar.SPDE} to the equation \eqref{eq:fSPDE}, where the angular power spectrum $\vT$ for the initial random field $\RF_{0}$ and the variances $\vfBm$ for the fBm $\fBmsph(t)$ satisfy $\sum_{\ell=0}^{\infty}(2\ell+1)(\vT+(1+\freigv)^{2} \vfBm)<\infty$. Then, for $t\ge0$, as $h\to0+$,
	\begin{equation}
		\norm{\sol(t+h) - \sol(t)}{\Lppsph{2}{2}} \le C h^{\hurst},\label{eq:UB.fBm.L2err.sol}
	\end{equation}
	where the constant $C$ depends only on $\alpha$, $\gamma$, $t$, $\vT$ and $\vfBm$.
\end{theorem}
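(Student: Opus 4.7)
The plan is to invoke Lemma~\ref{lem:sol.represent}, which gives the identity
\begin{equation*}
  \trsol[\ell](t+h) - \trsol[\ell](t) = (e^{-\freigv h} - 1)\,\trsol[\ell](t) + \sqrt{\vfBm}\,\GSD[\ell,h]\,\incsol(t),
\end{equation*}
thereby reducing the continuity-in-time question to two independently understood objects: the past value $\trsol[\ell](t)$ and the increment field $\incsol(t)$. First I would use orthogonality of the spherical-harmonic subspaces of different degrees in $\Lp{2}{2}$ to decompose
\begin{equation*}
  \norm{\sol(t+h) - \sol(t)}{\Lppsph{2}{2}}^2 = \sum_{\ell=0}^{\infty} \norm{\trsol[\ell](t+h) - \trsol[\ell](t)}{\Lppsph{2}{2}}^2,
\end{equation*}
and then apply the elementary inequality $(a+b)^2 \le 2(a^2 + b^2)$ to each summand to split the damped past contribution from the fresh fBm contribution.

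Next I would compute the two second-moment factors appearing on the right. By \eqref{eq:Fcoe.incsol.cov} combined with Parseval's identity we have $\norm{\incsol(t)}{\Lppsph{2}{2}}^2 = 2\ell+1$. Using Proposition~\ref{prop:isotr.sol.Cauchy} for the Cauchy piece and Proposition~\ref{prop:Xt.coeff.fSI} for the stochastic-integral piece (together with the fact that the two are uncorrelated), the $\ell$-th component satisfies
\begin{equation*}
  \norm{\trsol[\ell](t)}{\Lppsph{2}{2}}^2 \le C\,(2\ell+1)\bigl(\vT\, e^{-2\freigv(t+t_0)} + \vfBm\,\Gvar[\ell,t]\bigr).
\end{equation*}
Combined with the bound $(\GSD[\ell,h])^2 \le C\,h^{2\hurst}$ and $\Gvar[\ell,t] \le C\,t^{2\hurst}$ from Proposition~\ref{prop:Xt.coeff.fSI}, together with the elementary inequality $|e^{-\freigv h} - 1|^2 \le \min\{(\freigv h)^2, 1\} \le (\freigv h)^{2\hurst}$ (valid for any $\hurst \in [1/2,1]$), the sum over $\ell$ reduces to three series dominated by $h^{2\hurst}$ times (respectively) $\sum (2\ell+1)\freigv^{2\hurst} e^{-2\freigv(t+t_0)}\vT$, $\sum (2\ell+1)(1+\freigv)^2 \vfBm$, and $\sum (2\ell+1)\vfBm$.

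The hardest of these is the first series, because the hypothesis $\sum (2\ell+1)\vT < \infty$ does not on its own absorb the extra $\freigv^{2\hurst}$ factor. The key observation is that the exponential $e^{-2\freigv(t+t_0)}$ neutralises it: since $\sup_{x\ge 0} x^{2\hurst} e^{-2x(t+t_0)}$ is a finite constant depending only on $\hurst$ and $t+t_0$, one can pull this supremum outside and bound the first series by $C(\hurst,t,t_0)\,h^{2\hurst} \sum (2\ell+1)\vT$. The second series is controlled directly by $\sum (2\ell+1)(1+\freigv)^2 \vfBm < \infty$ after noting $\freigv^{2\hurst} \le (1+\freigv)^2$, and the third is a sub-case of this. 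Combining the three bounds and taking a square root yields the claim $\norm{\sol(t+h) - \sol(t)}{\Lppsph{2}{2}} \le C\,h^{\hurst}$. The main obstacle will be the careful bookkeeping ensuring that every $\freigv^{2\hurst}$ factor is paired either with the heat damping $e^{-2\freigv(t+t_0)}$ or with the $(1+\freigv)^2$ weight carried by $\vfBm$, and never left attached to the bare Cauchy weight $\vT$.
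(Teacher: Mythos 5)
Your proposal is correct in substance and arrives at the stated rate, but it follows a genuinely different route from the paper's proof, and in one respect a sounder one. The paper pushes both time points back to the origin via \eqref{eq:variat.sol.0}, writing $\trsol[\ell](t+h)-\trsol[\ell](t)=\bigl(e^{-\freigv(t+h)}-e^{-\freigv t}\bigr)\trsol[\ell](0)+\sqrt{\vfBm}\bigl(\GSD[\ell,t+h]-\GSD[\ell,t]\bigr)\incsol(0)$, and then consumes the weight $(1+\freigv)^{2}\vfBm$ through Corollary~\ref{corol:Gvar.diff.t.fBm}, $|\GSD[\ell,t+h]-\GSD[\ell,t]|\le C(1+\freigv)h^{\hurst}$. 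You instead keep the forward decomposition of Lemma~\ref{lem:sol.represent} anchored at time $t$, split with $(a+b)^{2}\le2(a^{2}+b^{2})$, and pay the $(1+\freigv)^{2}$ weight through $|e^{-\freigv h}-1|^{2}\le(\freigv h)^{2\hurst}$ acting on the fBm part of $\trsol[\ell](t)$; you need only the marginal variance bounds of Proposition~\ref{prop:Xt.coeff.fSI} and never the variance-increment estimate. What your route buys is that the identity you difference is a genuine pathwise identity, whereas the paper's subtraction tacitly treats $\incsol(0)$ as the \emph{same} random field in the representations of $\trsol[\ell](t)$ and $\trsol[\ell](t+h)$, although these two normalized integrals are distinct (not perfectly correlated) Gaussian variables; your use of $2(a^{2}+b^{2})$ also correctly avoids any independence claim between the past and the increment, which would be false for $\hurst>1/2$. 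The price is the extra term $(e^{-\freigv h}-1)^{2}\norm{\trsola(t)}{\Lppsph{2}{2}}^{2}$, which you must neutralise by borrowing decay from $e^{-2\freigv(t+t_{0})}$: as you observe, $\sup_{x\ge0}x^{2\hurst}e^{-2x(t+t_{0})}$ is finite only when $t+t_{0}>0$, so your argument does not cover the degenerate case $t=t_{0}=0$ under the bare hypothesis $\sum_{\ell}(2\ell+1)\vT<\infty$. This is a boundary gap you should flag, but it is shared with (indeed hidden inside) the paper's own proof, which silently drops the factor $\freigv^{2}$ produced by the mean-value theorem at the corresponding step. Apart from that, your bookkeeping --- the three series, the identity $\norm{\incsol(t)}{\Lppsph{2}{2}}^{2}=2\ell+1$, and the bounds $\Gvar[\ell,h]\le Ch^{2\hurst}$ and $\Gvar[\ell,t]\le Ct^{2\hurst}$ --- is correct, and the conclusion follows after taking square roots.
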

\begin{remark}
	Given $t_1>0$, as $h\to0+$, the truncation error in \eqref{eq:UB.fBm.L2err.sol} is uniformly bounded on $[t_1,+\infty)$:
	\begin{equation*}
		\sup_{t\ge t_{1}}\norm{\sol(t+h) - \sol(t)}{\Lppsph{2}{2}} \le C h^{\hurst},
	\end{equation*}
	where the constant $C$ depends only on $\alpha$, $\gamma$, $t_{1}$, $\vT$ and $\vfBm$.
\end{remark}

The mean square approximation errors of the truncated solutions $\trsol(t)$ have the same convergence rate $h^{\hurst}$ as $\sol(t)$, as we state below. The proof is similar to that of Theorem~\ref{thm:UB.fBm.L2err.sol}.
\begin{corollary} Under the conditions of Theorem~\ref{thm:UB.fBm.L2err.sol}, for $t_{1}>0$, as $h\to0+$:
	\begin{equation*}
		\sup_{t\ge t_{1}, \trdeg\in\Nz}\norm{\trsol(t+h) - \trsol(t)}{\Lppsph{2}{2}} \le C h^{\hurst},
	\end{equation*}
	where the constant $C$ depends only on $\alpha$, $\gamma$, $t_{1}$, $\vT$ and $\vfBm$.
\end{corollary}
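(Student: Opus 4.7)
The cleanest approach is to reduce the corollary to Theorem~\ref{thm:UB.fBm.L2err.sol} via Parseval. Since spherical harmonics of different degrees are $\Lp{2}{2}$-orthonormal, the $\ell$-th components $\trsol[\ell](t)=\trsola(t)+\trsolb(t)$ appearing in both $\trsol(t)$ and $\sol(t)$ are mutually orthogonal in $\Lppsph{2}{2}$. Applying the Pythagorean identity to the finite sum defining $\trsol$ and to the full Karhunen-Lo\`{e}ve expansion defining $\sol$, one obtains
\begin{equation*}
  \norm{\trsol(t+h)-\trsol(t)}{\Lppsph{2}{2}}^{2}
  =\sum_{\ell=0}^{L}\norm{\trsol[\ell](t+h)-\trsol[\ell](t)}{\Lppsph{2}{2}}^{2}
  \le \sum_{\ell=0}^{\infty}\norm{\trsol[\ell](t+h)-\trsol[\ell](t)}{\Lppsph{2}{2}}^{2}
  =\norm{\sol(t+h)-\sol(t)}{\Lppsph{2}{2}}^{2}.
\end{equation*}
The remark following Theorem~\ref{thm:UB.fBm.L2err.sol} then supplies the uniform bound $\norm{\sol(t+h)-\sol(t)}{\Lppsph{2}{2}}\le C h^{\hurst}$ for $t\ge t_{1}$, with $C$ depending only on $\alpha,\gamma,t_{1},\vT,\vfBm$; since this bound does not involve $L$, taking the supremum over $L\in\Nz$ is automatic.

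In the spirit of the hint ``similar to that of Theorem~\ref{thm:UB.fBm.L2err.sol}'', the alternative is to repeat that proof verbatim with the $\ell$-sum truncated at $L$. Lemma~\ref{lem:sol.represent} provides the decomposition
\begin{equation*}
    \trsol[\ell](t+h)-\trsol[\ell](t)=(e^{-\freigv h}-1)\,\trsol[\ell](t)+\sqrt{\vfBm}\,\GSD[\ell,h]\,\incsol(t),
\end{equation*}
so that $(a+b)^{2}\le 2a^{2}+2b^{2}$ combined with the degree-orthogonality reduces matters to bounding two sums. The ``new increment'' sum $\sum_{\ell=0}^{L}\vfBm\,\Gvar[\ell,h]\,(2\ell+1)$ is $O(h^{2\hurst})$ by Corollary~\ref{corol:Gvar.diff.t.fBm} and the hypothesis $\sum_{\ell}(2\ell+1)(1+\freigv)^{2}\vfBm<\infty$. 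For the ``decayed past'' sum, the independence of $\RF_{0}$ and $\fBmsph$ gives $\norm{\trsol[\ell](t)}{\Lppsph{2}{2}}^{2}=\norm{\trsola(t)}{\Lppsph{2}{2}}^{2}+\norm{\trsolb(t)}{\Lppsph{2}{2}}^{2}$; combining the elementary inequality $(1-e^{-\freigv h})^{2}\le(\freigv h)^{2\hurst}$ (valid for $\hurst\in[1/2,1]$ by comparing the cases $\freigv h\le 1$ and $\freigv h>1$) with the decay $e^{-2\freigv(t+t_{0})}\le e^{-2\freigv t_{1}}$ on the $\trsola$ part, and with a stationary-variance estimate $\Gvar[\ell,t]\le C_{\hurst}/\freigv^{2\hurst}$ for $\ell\ge 1$ on the $\trsolb$ part (the $\ell=0$ contribution drops out since the corresponding eigenvalue vanishes), one arrives at $C h^{2\hurst}$ uniformly in $t\ge t_{1}$ and $L$.

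The main obstacle along this direct route is the stationary-variance estimate $\Gvar[\ell,t]\le C_{\hurst}/\freigv^{2\hurst}$ for $\ell\ge 1$, which refines Proposition~\ref{prop:Xt.coeff.fSI} (whose bound grows as $t^{2\hurst}$) and reflects that $\int_{0}^{t}e^{-\freigv(t-u)}\IntB(u)$ is a fractional Ornstein--Uhlenbeck process with bounded stationary variance: for $\hurst=1/2$ it is explicit from \eqref{eq:var.int.BM}, while for $\hurst\in(1/2,1)$ it follows from the large-argument asymptotics of the incomplete gamma functions in \eqref{eq:var.int.fBm}. This obstacle does not arise on the Parseval route, so the Parseval-plus-main-theorem argument would be my preferred presentation.
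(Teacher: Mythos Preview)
Your Parseval reduction is correct and is in fact cleaner than what the paper has in mind. Since each $\trsol[\ell](t+h)-\trsol[\ell](t)$ lies in $\shSp[2]{\ell}$ pathwise, orthogonality of the $\shSp[2]{\ell}$ in $\Lp{2}{2}$ gives orthogonality in $\Lppsph{2}{2}$ after taking expectation, so the truncated increment is dominated termwise by the full increment and the Remark after Theorem~\ref{thm:UB.fBm.L2err.sol} finishes the job.

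What the paper actually intends by ``similar to that of Theorem~\ref{thm:UB.fBm.L2err.sol}'' is to redo that proof with the $\ell$-sums truncated at $L$. Note, however, that the paper's proof of Theorem~\ref{thm:UB.fBm.L2err.sol} is based on the time-$0$ representation \eqref{eq:variat.sol.0}, not on \eqref{eq:variat.sol.time}: one writes
\[
  \trsol[\ell](t+h)-\trsol[\ell](t)=\bigl(e^{-\freigv(t+h)}-e^{-\freigv t}\bigr)\trsol[\ell](0)+\sqrt{\vfBm}\,\bigl(\GSD[\ell,t+h]-\GSD[\ell,t]\bigr)\incsol(0),
\]
so the second term is controlled by Corollary~\ref{corol:Gvar.diff.t.fBm} and the first by the mean value theorem together with $\expect{|\Fcoe{\solC(t_0)}|^{2}}=e^{-2\freigv t_0}\vTc$. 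With this decomposition the truncation at $L$ is immediate and no stationary Ornstein--Uhlenbeck variance estimate $\Gvar[\ell,t]\le C_{\hurst}\freigv^{-2\hurst}$ is needed. Your direct route via \eqref{eq:variat.sol.time} is also valid, but it manufactures an obstacle (uniform-in-$t$ control of $\norm{\trsol[\ell](t)}{\Lppsph{2}{2}}$) that the paper's decomposition avoids entirely. Either way, your Parseval argument is the shortest path and I would keep it as the primary proof.
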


For $\hurst=1/2$ and $t>0$, the convergence order of the upper bound in \eqref{eq:UB.fBm.L2err.sol} can be improved to $h$, as we state in the following corollary. The proof is similar to that of Theorem~\ref{thm:UB.fBm.L2err.sol} but needs to use Proposition~\ref{prop:Gvar.diff.t.fBm}~(i). 
\begin{corollary}\label{cor:MQV.Bm}
	Let $\sol(t)$ be the solution in \eqref{eq:sol.fr.scalar.SPDE} to the equation \eqref{eq:fSPDE} with $\hurst=1/2$, where $\sum_{\ell=0}^{\infty}(2\ell+1)(\vT+(\ell+1)^{\alpha+\gamma} \vfBm)<\infty$. Let $t>0$ and $h>0$. Then,
	\begin{equation}
		\norm{\sol(t+h) - \sol(t)}{\Lppsph{2}{2}} \le C h,\label{eq:UB.Bm.L2err.sol}
	\end{equation}
	where the constant $C$ depends only on $\alpha$, $\gamma$, $t$, $\vT$ and $\vfBm$.
\end{corollary}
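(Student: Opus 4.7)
The argument proceeds exactly as in the proof of Theorem~\ref{thm:UB.fBm.L2err.sol}, with Corollary~\ref{corol:Gvar.diff.t.fBm} (which provides a H\"older-type bound of order $h^{\hurst}$) replaced throughout by Proposition~\ref{prop:Gvar.diff.t.fBm}(i). The latter yields the sharper linear bound $|\GSD[\ell,t+h]-\GSD[\ell,t]|\le C_{1}h$ in the Brownian case $\hurst=1/2$ for any fixed $t>0$, and this is precisely what upgrades the rate from $h^{1/2}$ to $h$.

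The first step is to use orthogonality of $\shY$ across degrees (and across real and imaginary parts for fixed $\ell$), together with the independence of the initial-condition coefficients from the fBms, to split the squared norm as
$$
\norm{\sol(t+h)-\sol(t)}{\Lppsph{2}{2}}^{2} = \sum_{\ell=0}^{\infty}\norm{\trsola(t+h)-\trsola(t)}{\Lppsph{2}{2}}^{2} + \sum_{\ell=0}^{\infty}\norm{\trsolb(t+h)-\trsolb(t)}{\Lppsph{2}{2}}^{2},
$$
with the two components $\trsola$ and $\trsolb$ as in \eqref{eq:trsol}. The initial-condition piece carries the deterministic factor $(e^{-\freigv(t+h+t_{0})}-e^{-\freigv(t+t_{0})})^{2}\le(\freigv h)^{2}e^{-2\freigv(t+t_{0})}$; combined with the angular power spectrum factor from Proposition~\ref{prop:isotr.sol.Cauchy}, with $\freigv\asymp(1+\ell)^{\alpha+\gamma}$ from \eqref{eq:feigv.est}, and with the summability of $\sum_{\ell}(2\ell+1)\vT$, this contributes $O(h^{2})$.

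For the fBm piece I would follow the parent theorem in bounding $\norm{\trsolb(t+h)-\trsolb(t)}{\Lppsph{2}{2}}^{2}$ by a multiple of $(2\ell+1)\vfBm\,|\GSD[\ell,t+h]-\GSD[\ell,t]|^{2}$, then invoke Proposition~\ref{prop:Gvar.diff.t.fBm}(i) to obtain $(2\ell+1)\vfBm\,C_{1}^{2}h^{2}$. For $\ell\ge1$ one has $C_{1}^{2}\le\freigv\, e^{-4\freigv t}/2\le C(t)\freigv$; for $\ell=0$ one has $C_{1}^{2}=1/(4t)$. The sum over $\ell$ is then finite by the hypothesis $\sum_{\ell}(2\ell+1)(\ell+1)^{\alpha+\gamma}\vfBm<\infty$, since $(1+\ell)^{\alpha+\gamma}\asymp\freigv$ exactly absorbs $C_{1}^{2}$. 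Adding the two contributions and taking square roots yields the claimed bound $Ch$.

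\textbf{Main obstacle.} The principal source of difficulty is the singular behavior $C_{1}\asymp 1/\sqrt{t}$ of the constant in Proposition~\ref{prop:Gvar.diff.t.fBm}(i) as $t\to0^{+}$: the constant in \eqref{eq:UB.Bm.L2err.sol} necessarily degrades like $1/\sqrt{t}$ near zero, so the hypothesis $t>0$ is essential and no analog uniform down to $t=0$ is available. A second delicate point is that the weight $(\ell+1)^{\alpha+\gamma}$ in the summability condition is tightly matched to the $\ell$-growth $C_{1}^{2}\sim\freigv\asymp(1+\ell)^{\alpha+\gamma}$; any weaker weight would fail to control the fBm sum, so the hypothesis of the corollary is essentially sharp for the method.
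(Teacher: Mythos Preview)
Your proposal is correct and follows essentially the same route as the paper. The paper itself only sketches the proof of Corollary~\ref{cor:MQV.Bm} by pointing to the argument for Theorem~\ref{thm:UB.fBm.L2err.sol} and indicating that Proposition~\ref{prop:Gvar.diff.t.fBm}(i) should replace Corollary~\ref{corol:Gvar.diff.t.fBm}; you carry out precisely this substitution, correctly track the $\ell$-dependent constant $C_{1}$, and match it against the weight $(\ell+1)^{\alpha+\gamma}$ in the summability hypothesis.
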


\section{Numerical examples}\label{sec:numer}
In this section, we show some numerical examples for the solution $\sol(t)$ of the fractional SPDE \eqref{eq:fSPDE}. Using a $2$-weakly isotropic Gaussian random field as the initial condition, we illustrate the convergence rates of the truncation errors and the mean square approximation errors of the Karhunen-Lo\`{e}ve approximations $\trsol(t)$ of the solution $\sol(t)$. We show the evolutions of the solution of the equation \eqref{eq:fSPDE} with CMB (cosmic microwave background) map as the initial random field.

\subsection{Gaussian random field as initial condition}\label{sec:numer.Gauss}
Let $\RF_{0}$ be the $2$-weakly isotropic Gaussian random field whose Fourier coefficient  $\Fcoe{(\RF_{0})}$ follows the normal distribution $\nrml(0,\APS')$ for each pair of $(\ell,m)$, where the variances 
\begin{equation}\label{eq:APS.numer}
	\APS' := \left\{\begin{array}{ll}
		\APS[0], & \ell=0,\\
		\APS/2, & \ell\ge1,
	\end{array}
	\right.
\end{equation}
with
\begin{equation*}
	\APS:=1/(1+\ell)^{2\smind+2}, \quad \smind>1,
\end{equation*}
see \eqref{eq:ran.cond} and \eqref{eq:vTc}.

The initial condition of the equation \eqref{eq:fSPDE} is $\solC(t_{0},\PT{x})$ given by \eqref{eq:sol.fr.stoch.Cauchy}. The fBm is given by \eqref{eq:fBmsph.real.expan} with variances
\begin{equation}\label{eq:vfBm.numer}
	\vfBm:=1/(1+\ell)^{2\smind+2}, \quad \smind>1.
\end{equation}

By \citep[Section~4]{LaSc2015}, the random fields $\RF_{0}$ with angular power spectrum $\APS$ in \eqref{eq:APS.numer} and $\fBmsph(t)$ with variances $\vfBm$ in \eqref{eq:vfBm.numer} at $t\ge0$ are in Sobolev space $\sob{2}{r}{2}$ $\Pas$, and thus can be represented by a continuous function on $\sph{2}$ almost surely. This enables numerical implementation.

To obtain numerical results, we use $\trsol[L_{0}](t)$ with $L_{0}=1000$ as a substitution of the solution $\sol(t)$ in Theorem~\ref{thm:sol.fr.SPDE.scalar} to the equation \eqref{eq:fSPDE}. The truncated expansion $\trsol(t)$ given in Definition~\ref{defn:KL.approx} is computed using the fast spherical Fourier transform \citep{KeKuPo2007,RoTy2006}, evaluated at $\Neval=12,582,912$ HEALPix (Hierarchical Equal Area isoLatitude Pixezation) points\footnote{\url{http://healpix.sourceforge.net}} on $\sph{2}$, the partition by which is equal-area, see \citep{Gorski_etal2005}. Then the (squared) mean $L_{2}$-errors are evaluated by
\begin{align*}
    \norm{\trsol(t)-\sol(t)}{\Lppsph{2}{2}}^{2}
    &=\expect{\normb{\trsol(t)-\sol(t)}{\Lp{2}{2}}^{2}}\\
    &= \expect{\int_{\sph{2}} \bigl|\trsol(t,\PT{x}) - \sol(t,\PT{x})\bigr|^{2} \IntDiff[]{x}}\notag\\
    &\approx \expect{\frac{1}{\Neval}\sum_{i=1}^{\Neval} \bigl(\trsol(t,\peval{i}) - \sol(t,\peval{i})\bigr)^{2}}\notag\\
    &\approx \frac{1}{\sampnum\Neval}\sum_{n=1}^{\sampnum}\sum_{i=1}^{\Neval} \bigl(\trsol(t,\sampdis_{n},\peval{i}) - \sol(t,\sampdis_{n},\peval{i})\bigr)^{2},
\end{align*}
where the third line discretizes the integral on $\sph{2}$ by the HEALPix points $\peval{i}$ with equal weights $1/\Neval$, and the last line approximates the expectation by the mean of $\sampnum$ realizations.

In a similar way, we can estimate the mean square approximation error between $\trsol(t+h)$ and $\trsol(t)$ for $t\ge0$ and $h>0$.

For each realization and given time $t$, the fractional stochastic integrals in \eqref{eq:fSI.0t} in the expansion of $\trsol(t)$ in \eqref{eq:trsol} are simulated as independent, normally distributed random variables with means zero and variances $\vfBm$ in \eqref{eq:var.int.fBm}.

Using the fast spherical Fourier transform, the computational steps for $\sampnum$ realizations of $\trsol(t)$ evaluated at $\Neval$ points are $\bigo{}{\sampnum\Neval\sqrt{\log\Neval}}$.

The simulations were carried out on a desktop computer with Intel Core i7-6700 CPU @ 3.47GHz with 32GB RAM under the Matlab R2016b environment. 

Figure~\ref{fig:L2err_H05_a08_g08} shows the mean $L_{2}$-errors of $\sampnum=100$ realizations of the truncated Karhunen-Lo\`{e}ve solution $\trsol(t)$ with degree $L$ up to $800$ from the approximated solution $\trsol[L_{0}](t)$, of the fractional SPDE \eqref{eq:fSPDE} with the Brownian motion $\fBmsph[1/2](t)$, for $\smind=1.5$ and $2.5$ and $(\alpha,\gamma)=(0.8,0.8)$ at $t=t_{0}=10^{-5}$.

Figure~\ref{fig:L2err_H08_a05_g05} shows the mean $L_{2}$-errors of $\sampnum=100$ realizations of the truncated Karhunen-Lo\`{e}ve expansion $\trsol(t)$ with degree $L$ up to $800$ from the approximated solution $\trsol[L_{0}](t)$, of the fractional SPDE with the fractional Brownian motion $\fBmsph(t)$ with Hurst index $\hurst=0.8$, for $\smind=1.5$ and $2.5$ and $(\alpha,\gamma)=(0.5,0.5)$ at $t=t_{0}=10^{-5}$.

The green and yellow points in each picture in Figure~\ref{fig:L2err} show the $L_{2}$-errors of $\sampnum=100$ realizations of $\sol_{L}(t)$. For each $L=1,\dots,1000$, the sample means of the $L_{2}$-errors for $r=1.5$ and $2.5$ are shown by the red triangle and the brown hexagon respectively. In the log-log plot, the blue and cyan straight lines which show the least squares fitting of the mean $L_{2}$-errors give the numerical estimates of convergence rates for the approximation of $\sol_{L}(t)$ to $\sol_{L_{0}}(t)$. 

The results show that the convergence rate of the mean $L_{2}$-error of $\sol_{L}(t)$ is close to the theoretical rate $L^{-\smind}$ ($\smind=1.5$ and $2.5$) for each triple of $(\hurst,\alpha,\gamma)=(0.5,0.8,0.8)$ and $(0.8,0.5,0.5)$. This indicates that the Hurst index $\hurst$ for the fBm $\fBmsph(t)$ and the index $\alpha,\gamma$ for the fractional diffusion operator $\psi(-\LBo)$ have no impact on the convergence rate of the $L_{2}$-error of the truncated solution.  

\begin{figure}
  \centering
  \begin{minipage}{\textwidth}
  \centering
  \begin{minipage}{\textwidth}
  \begin{minipage}{0.485\textwidth}
  \centering
  \includegraphics[trim = 0mm 0mm 0mm 0mm, width=0.9\textwidth]{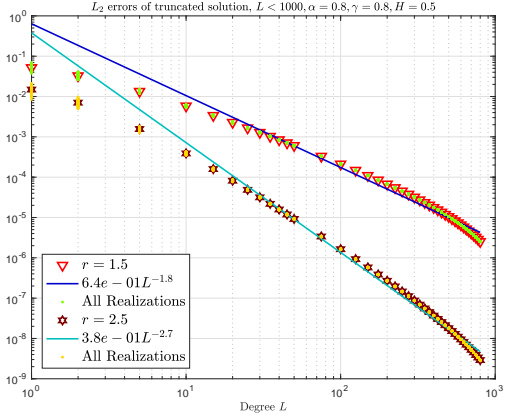}\\
  \subcaption{$\hurst=0.5$, $\alpha=0.8$, $\gamma=0.8$~~~~~}\label{fig:L2err_H05_a08_g08}
  \end{minipage}
  \begin{minipage}{0.485\textwidth}
  \centering
  \includegraphics[trim = 0mm 0mm 0mm 0mm, width=0.9\textwidth]{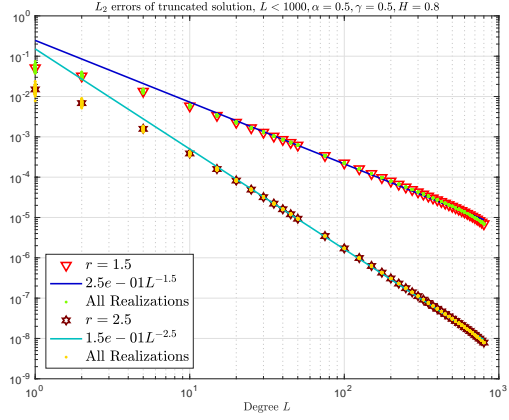}\\
  \subcaption{$\hurst=0.8$, $\alpha=0.5$, $\gamma=0.5$~~~~~}\label{fig:L2err_H08_a05_g05}
  \end{minipage}
  \end{minipage}\\[2mm]
  \begin{minipage}{0.8\textwidth}
\caption{\scriptsize (a)--(b) show the mean truncated $L_{2}$-errors of the Karhunen-Lo\`{e}ve approximations $\sol_{L}(t)$ with degree $L$ up to $800$ at $t=t_0=10^{-5}$, for $(\hurst,\alpha,\gamma)=(0.5,0.8,0.8)$ and $(\hurst,\alpha,\gamma)=(0.8,0.5,0.5)$ respectively. The $X$-axis is the degree $L$ and the $Y$-axis indicates the $L_{2}$-errors.
}\label{fig:L2err}
\end{minipage}
\end{minipage}
\end{figure}

Figure~\ref{fig:MQV} shows the mean square approximation errors of $\sampnum=100$ realizations of the truncated Karhunen-Lo\`{e}ve expansion $\trsol(t+h)$ from $\trsol(t)$ with degree $L=1000$ of the fractional SPDE with the fractional Brownian motion $\fBmsph(t)$ with Hurst index $\hurst=0.5$ and $\hurst=0.9$, for $\smind=1.5$ and $(\alpha,\gamma)=(0.8,0.8)$ at $t=t_{0}=10^{-5}$ and time increment $h$ ranging from $10^{-7}$ to $10^{-1}$.

The green points in each picture in Figure~\ref{fig:MQV} show the (sample) mean square approximation errors of $\sampnum=100$ realizations of $\sol_{L}(t+h)$. The blue straight line which shows the least squares fitting of the mean square approximation errors gives the numerical estimate of the convergence rate for the approximation of $\trsol(t+h)$ to $\trsol(t)$ in time increment $h$. 

For $\hurst=0.9$, Figure~\ref{fig:MQV_H08_a05_g05} shows that the convergence rate of the mean square approximation errors of $\trsol(t+h)$ is close to the theoretical rate $h^{\hurst}$. 
For $\hurst=0.5$, Figure~\ref{fig:MQV_H05_a08_g05} shows that the convergence rate of the mean square approximation errors of $\trsol(t+h)$ is close to the  rate $h$ as Corollary~\ref{cor:MQV.Bm} suggests. The variance of the mean square approximation errors for $\hurst=0.5$ is larger than for $\hurst=0.9$ for given $h$.
This illustrates that the Hurst index $\hurst$ for the fBm affects the smoothness of the evolution of the solution of the fractional SPDE \eqref{eq:fSPDE} with respect to time $t$.  

Figures~\ref{fig:Gauss.KL0} and \ref{fig:Gauss.KL} illustrate realizations of the truncated solutions $X_{L_0}(t)$ and $\sol_{L}(t)$ with $L_0=1000$ and $L=800$ at $t=t_{0}=10^{-5}$, evaluated at $\Neval=12,582,912$ HEALPix points. Figure~\ref{fig:Gauss.KLerr} shows the corresponding pointwise errors between $X_{L_0}(t)$ and $\sol_{L}(t)$. It shows that the truncated solution $\trsol(t)$ has good approximation to the solution $\sol(t)$ and the pointwise errors which are almost uniform on $\sph{2}$ are very small compared to the values of $\trsol(t)$.

\begin{figure}
  \centering
  \begin{minipage}{\textwidth}
  \centering
  \begin{minipage}{\textwidth}
  \begin{minipage}{0.485\textwidth}
  \centering
  \includegraphics[trim = 0mm 0mm 0mm 0mm, width=0.9\textwidth]{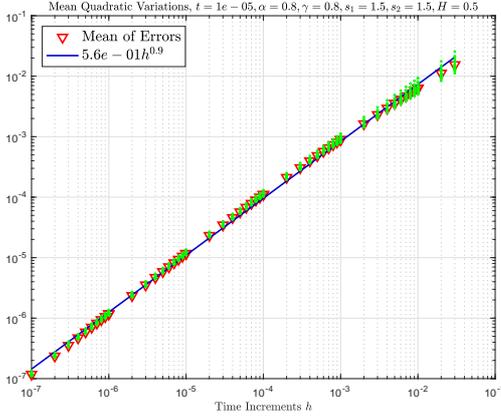}\\
  \subcaption{$\hurst=0.5$, $\alpha=0.8$, $\gamma=0.8$~~~~~}\label{fig:MQV_H05_a08_g05}
  \end{minipage}
  \begin{minipage}{0.485\textwidth}
  \centering
  \includegraphics[trim = 0mm 0mm 0mm 0mm, width=0.9\textwidth]{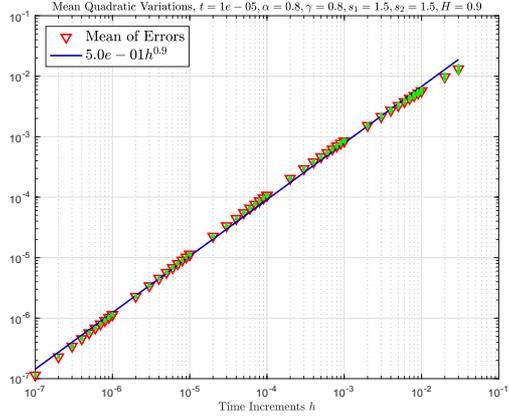}\\
  \subcaption{$\hurst=0.9$, $\alpha=0.8$, $\gamma=0.8$~~~~~}\label{fig:MQV_H08_a05_g05}
  \end{minipage}
  \end{minipage}\\[2mm]
  \begin{minipage}{0.8\textwidth}
\caption{\scriptsize (a)--(b) show the mean square approximation errors of the Karhunen-Lo\`{e}ve approximations $\sol_{L}(t)$ with degree $L=1000$ and $(\alpha,\gamma)=(0.8,0.8)$ at $t=t_0=10^{-5}$, for Hurst index $\hurst=0.5$ and $0.9$ respectively. The $X$-axis is the time increments $h$ ranging from $10^{-7}$ to $10^{-1}$.
}\label{fig:MQV}
\end{minipage}
\end{minipage}
\end{figure}

\begin{figure}
  \centering
  \begin{minipage}{\textwidth}
  \centering
  \begin{minipage}{\textwidth}
  \begin{minipage}{0.32\textwidth}
  \centering
  \includegraphics[trim = 15mm 0mm 0mm 0mm, width=1.09\textwidth]{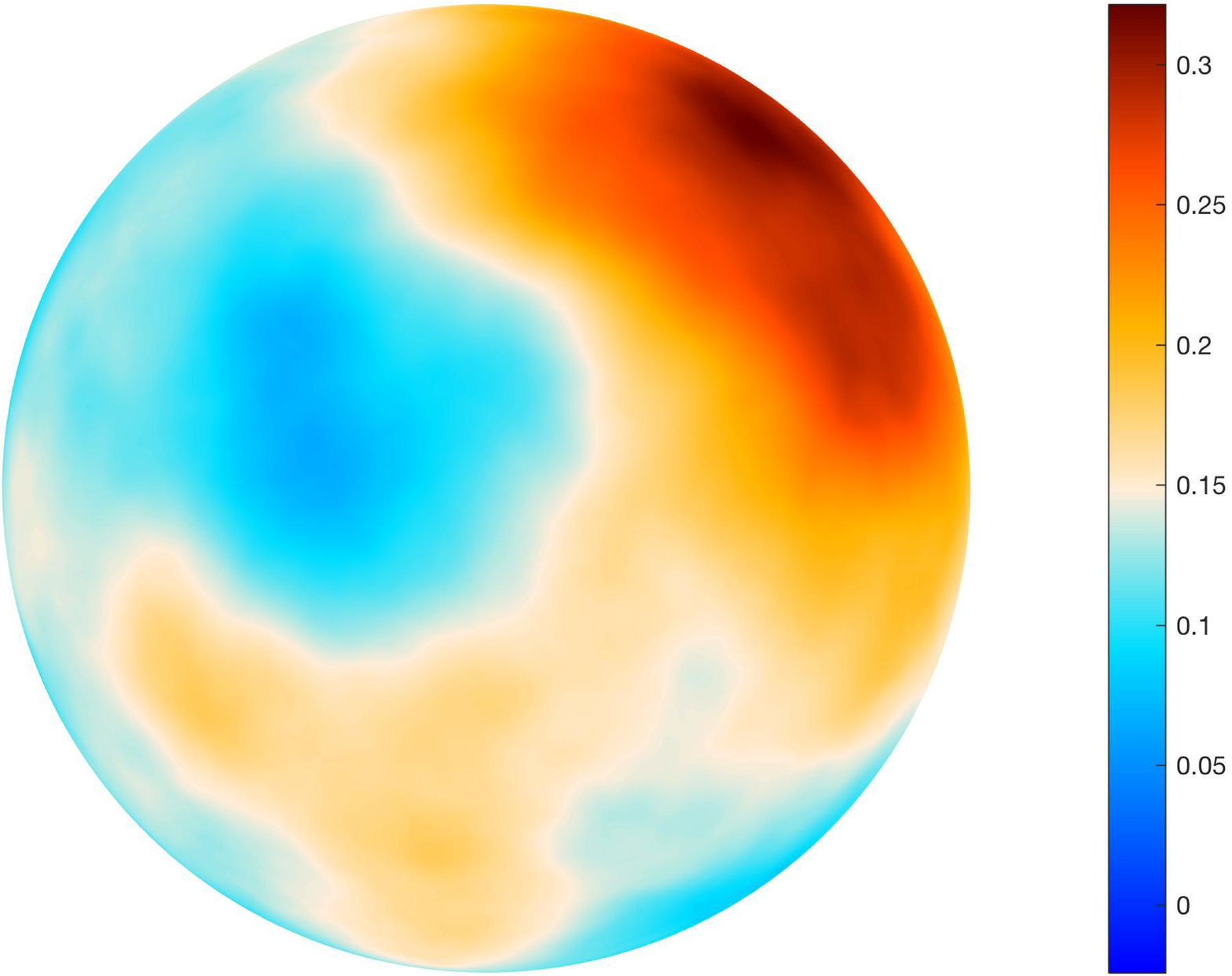}\\[-5mm]
  \subcaption{$X_{L_0}$, $L_0 = 1000$~~~~~}\label{fig:Gauss.KL0}
  \end{minipage}
  \hspace{0.0\textwidth}
  \begin{minipage}{0.32\textwidth}
  \centering
  \includegraphics[trim = 15mm 0mm 0mm 0mm, width=1.09\textwidth]{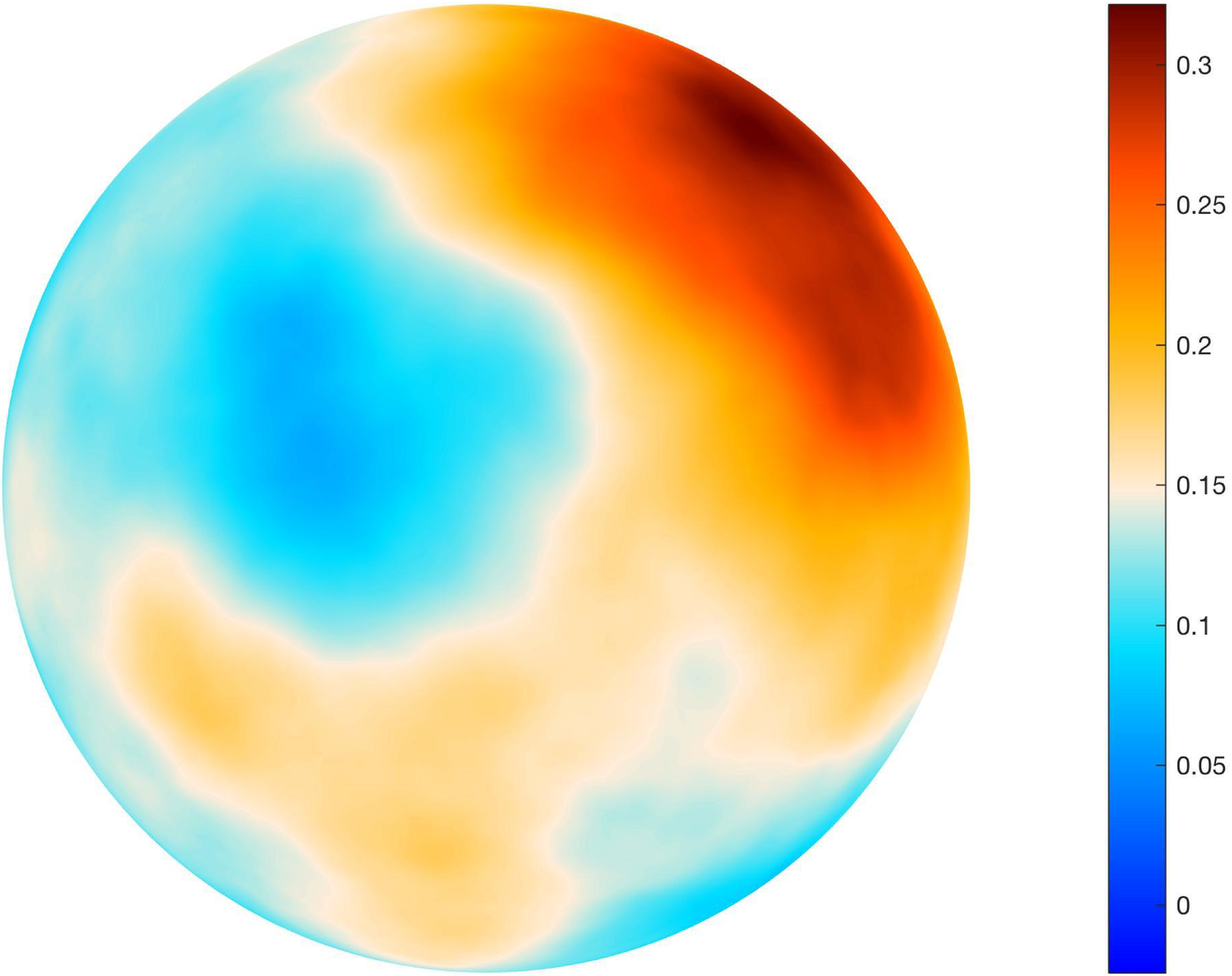}\\[-5mm]
  \subcaption{$X_{L}$, $L = 800$~~~~~}\label{fig:Gauss.KL}
  \end{minipage}
  \begin{minipage}{0.32\textwidth}
  \centering
  \includegraphics[trim = 15mm 0mm 0mm 0mm, width=1.15\textwidth]{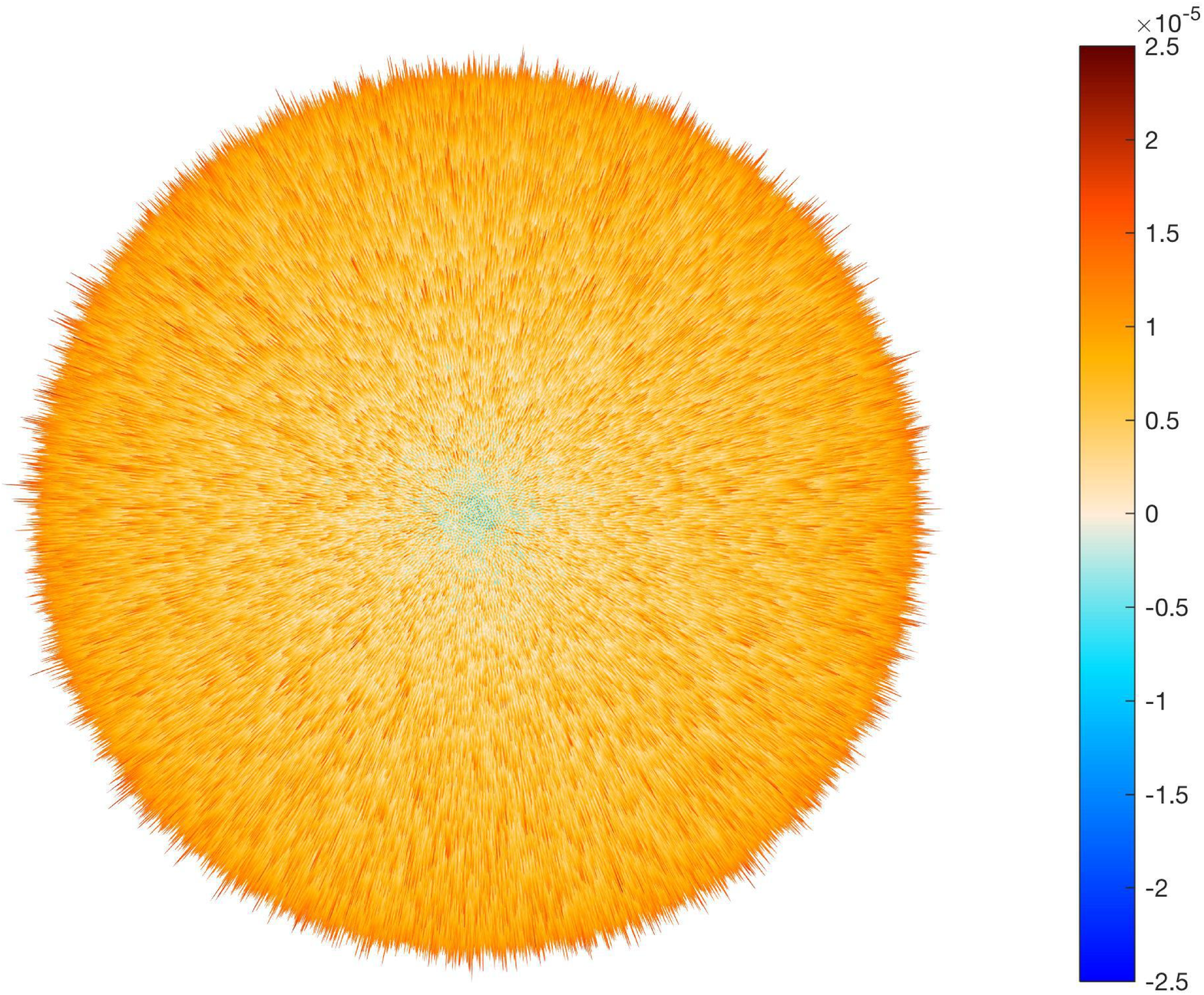}\\[-5mm]
  \subcaption{Errors $X_{L}-X_{L_0}$}\label{fig:Gauss.KLerr}
  \end{minipage}
  \end{minipage}
  \begin{minipage}{0.8\textwidth}
  \vspace{2mm}
\caption{\scriptsize (a) and (b) show realizations of the truncated Karhunen-Lo\`{e}ve expansion $\sol_{L}(t)$ with degree $L=800$ for the solution of the fractional SPDE, where $(\alpha,\gamma)=(0.5,0.5)$ and $t=t_{0}=10^{-5}$. (c) shows the pointwise errors of (b) from (a).
}\label{fig:X_a08_g05_no1}
\end{minipage}
\end{minipage}
\end{figure}

To understand further the interaction between this effect from the Hurst index $\hurst$ of fBm and the parameters $(\alpha, \gamma)$ from the diffusion operator, we generate realizations of $\trsol[1000](t)$ at time $t=t_0=10^{-5}$ for the cases $(\hurst, \alpha, \gamma)=(0.9, 2, -2)$, $(0.9, 1.5, -0.5)$ and $(0.9, 1, -0.5)$. These paths are displayed in Figure~\ref{fig:fBm_Gauss_H09}. We observe that these random fields have fluctuations (about the sample mean) of increasing size as $\alpha$ increases from $0.5$ in Figure~\ref{fig:X_a08_g05_no1} to $1$, $1.5$ then $2$ in Figure~\ref{fig:fBm_Gauss_H09}. In fact, the fluctuation is extreme when $\alpha +\gamma =0$. When $\alpha =2$, the density of the Riesz-Bessel distribution has sharper peaks and heavier tails as $\gamma \rightarrow -2$. As explained in \citep{AnMc2004}, the L\'{e}vy motion in the case $\alpha +\gamma =0$ is a compound Poisson process. The particles move through jumps, but none of the jumps is very large due to the parameter $\alpha =2$. Hence the distribution has finite moments of all orders.


\begin{figure}
  \centering
  \begin{minipage}{\textwidth}
  \centering
  \begin{minipage}{\textwidth}
  \begin{minipage}{0.32\textwidth}
  \centering
  \includegraphics[trim = 15mm 0mm 0mm 0mm, width=1.09\textwidth]{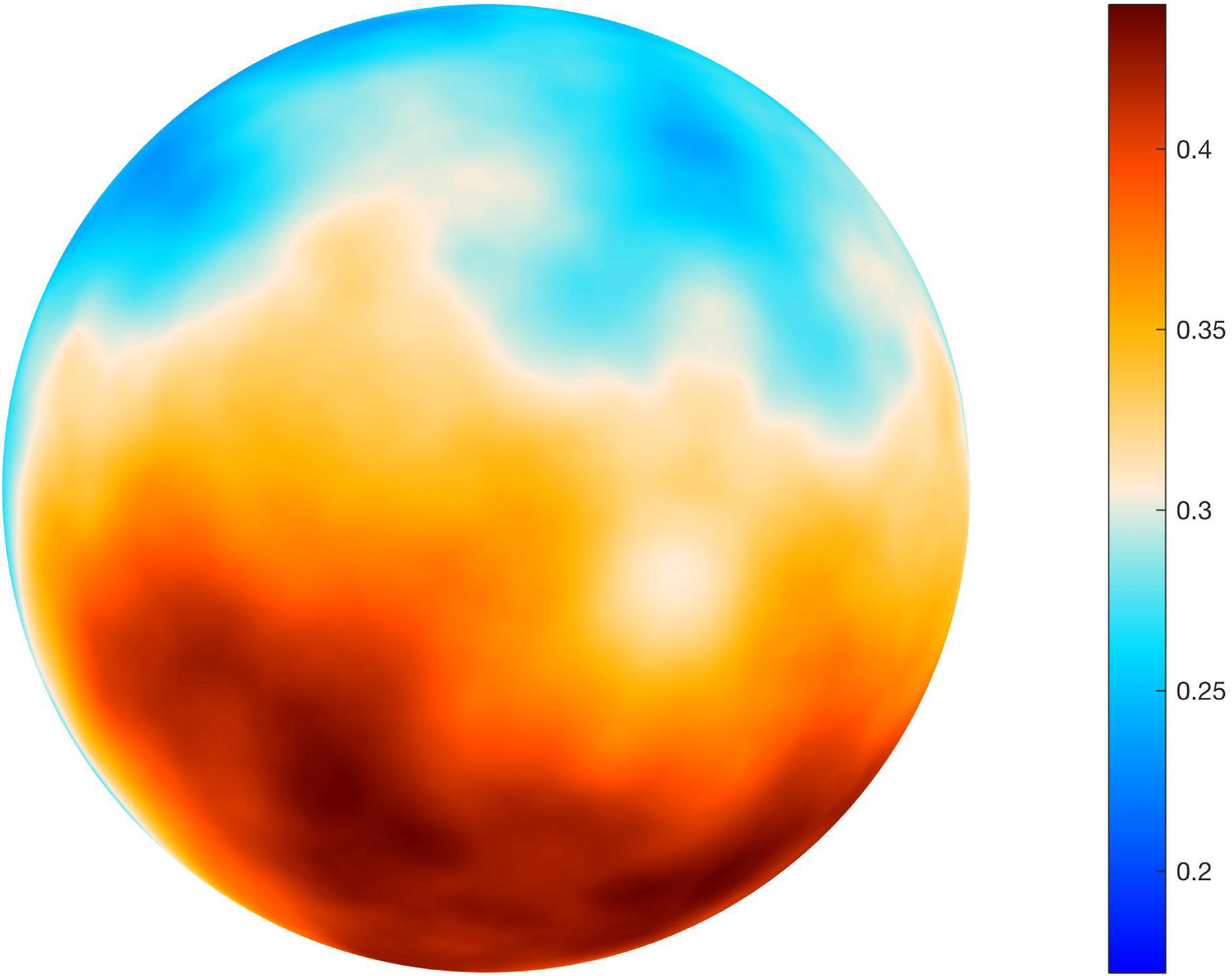}\\[-5mm]
  \subcaption{$X_{1000}(t)$, $\alpha=2$, $\gamma=-2$~~~~~}\label{fig:Gauss.H09_a2_g-2}
  \end{minipage}
  \hspace{0.0\textwidth}
  \begin{minipage}{0.32\textwidth}
  \centering
  \includegraphics[trim = 15mm 0mm 0mm 0mm, width=1.09\textwidth]{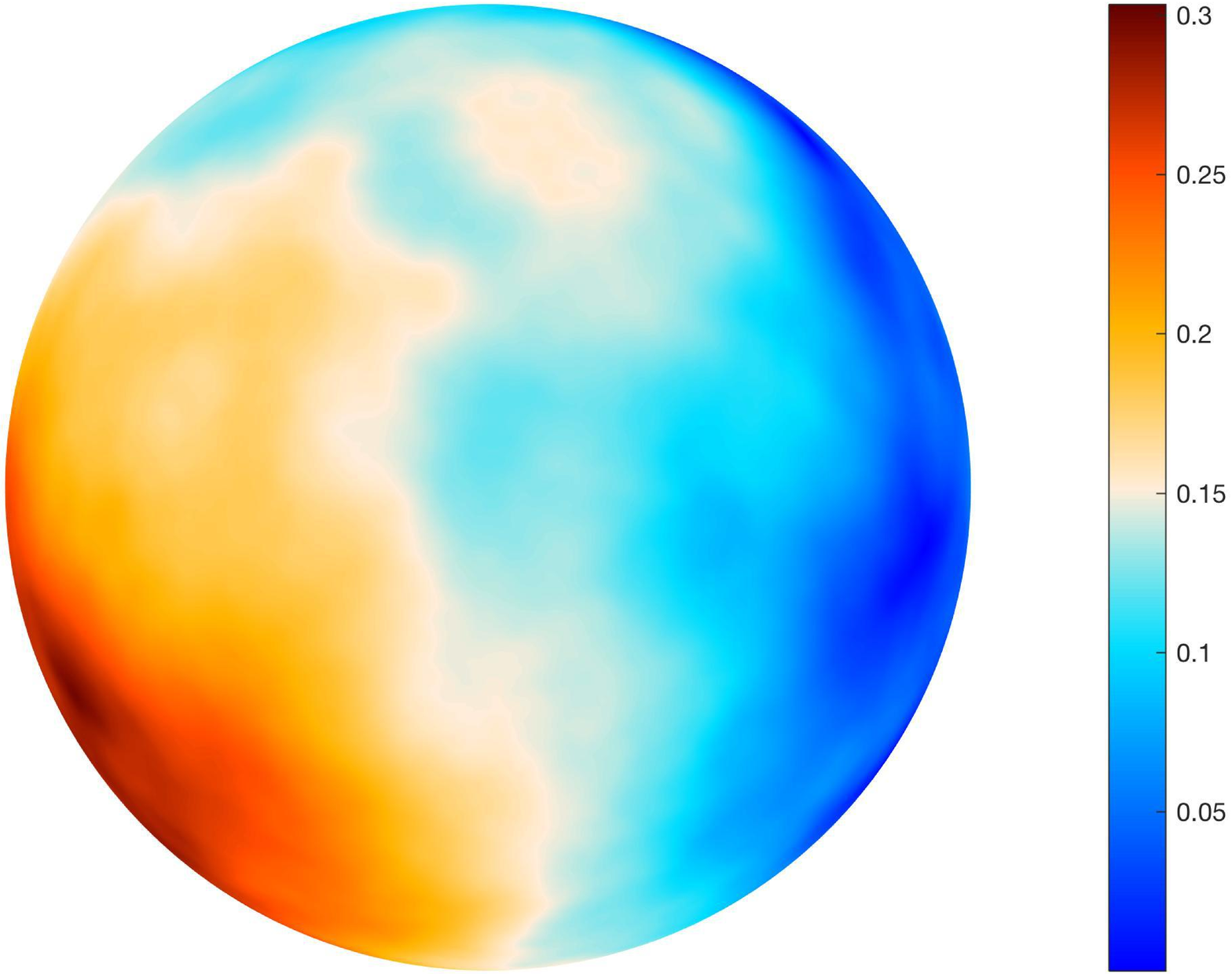}\\[-5mm]
  \subcaption{$X_{1000}(t)$, $\alpha=1.5$, $\gamma=-0.5$~~~~~}\label{fig:Gauss.H09_a15_g-05}
  \end{minipage}
  \begin{minipage}{0.32\textwidth}
  \centering
  \includegraphics[trim = 15mm 0mm 0mm 0mm, width=1.1\textwidth]{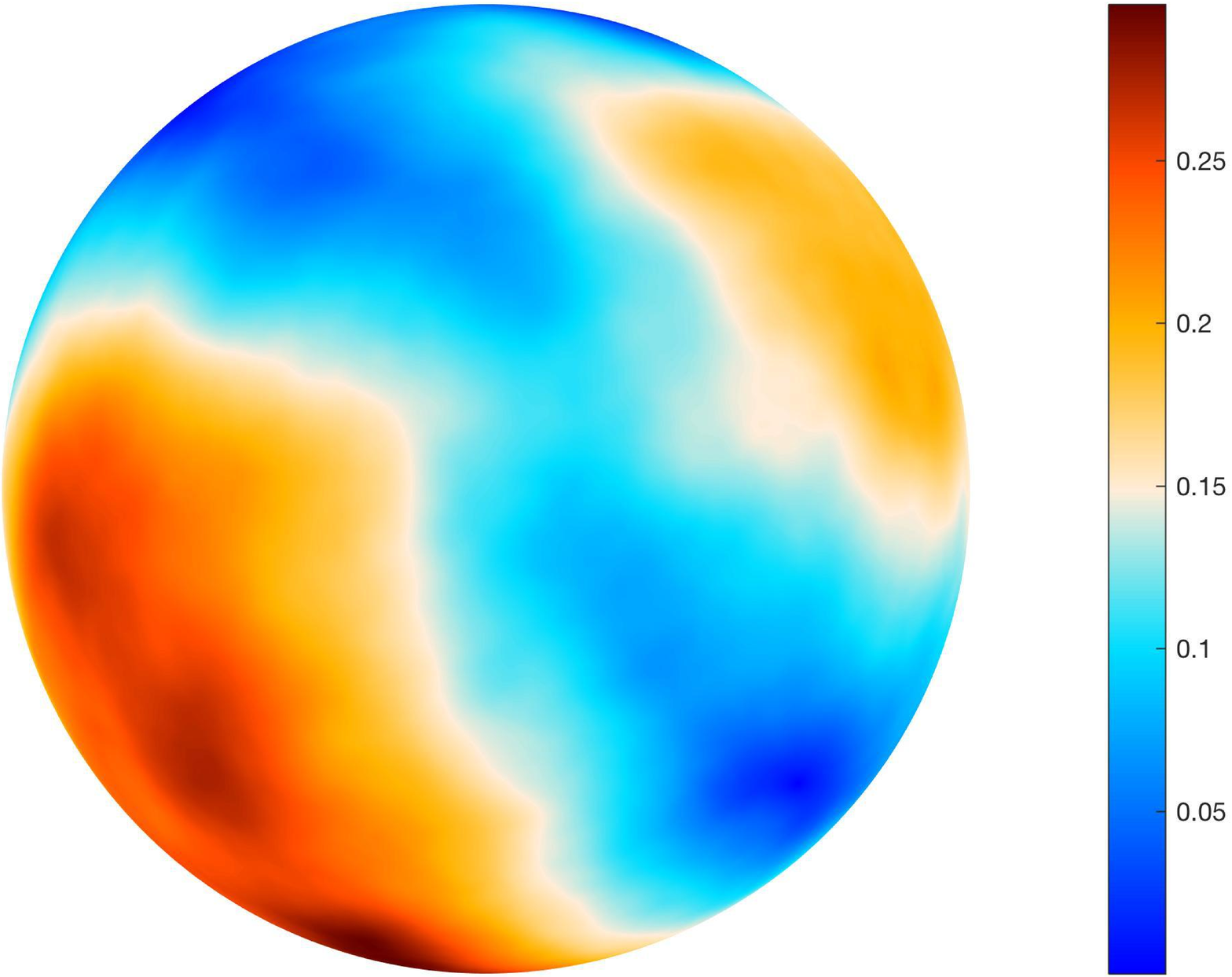}\\[-5mm]
  \subcaption{$X_{1000}(t)$, $\alpha=1$, $\gamma=-0.5$}\label{fig:Gauss.H09_a1_g-05}
  \end{minipage}
  \end{minipage}\\[2mm]
  \begin{minipage}{0.8\textwidth}
\caption{\scriptsize (a), (b) and (c) show realizations of the truncated Karhunen-Lo\`{e}ve expansion $\sol_{1000}(t)$ of the solution of the fractional SPDE for $(\alpha,\gamma)=(2,-2)$, $(1.5,-0.5)$ and $(1,-0.5)$, where $\hurst=0.9$ and $t=t_{0}=10^{-5}$.
}\label{fig:fBm_Gauss_H09}
\end{minipage}
\end{minipage}
\end{figure}

\subsection{CMB random field as initial condition}
The cosmic microwave background (CMB) is the radiation that was in equilibrium with the plasma of the early universe, decoupled at the time of recombination of atoms and free electrons. Since then, the electromagnetic wavelengths have been stretching with the expansion of the universe (for a description of the current standard cosmological model, see e.g. \citep{Dodelson2003}). The inferred black-body temperature shows direction-dependent variations of up to 0.1\%. The CMB map that is the sky temperature intensity of CMB radiation can be modelled as a realization of a random field $\cmbRF$ on $\sph{2}$. 
Study of the evolution of CMB field is critical to unveil important properties of the present and primordial universe \citep{PiScWh2000,Planck2016I}.

From the factors that are exponential in $t$ in (4.5), it is apparent that within the normalized form of fractional SPDE that has been considered in \eqref{eq:fSPDE}, it is assumed that the variables $X$, $\PT{x}$ and $t$ have already been rescaled to make them dimensionless. Now we consider the case that the variable $X$ is replaced by temperature $\widetilde{\sol}$ for which the dimensions of measurement have traditionally been written as $\Theta$ (see e.g. Fulford and Broadbridge \cite{FuBr2002}). In its dimensional form, the fractional SPDE is
\begin{equation*}
	\IntD{\widetilde{\sol}(t,\PT{x})}+\frac{1}{t_s}(-\ell_s^2\LBo)^{\alpha/2} (I-\ell_s^2\LBo)^{\gamma/2}\widetilde{\sol}(t,\PT{x})\IntD{t}=\sigma(\hspace{-0.5mm}\IntD{t})^{\hurst}\fBm(1,\PT{x})
\end{equation*}
where $\LBo$ has dimension $[L]^{-2}$, where $[L]$ stands for the unit of length,  
necessitating the appearance of a length scale $\ell_s$ and a time scale $t_s$, in order for each term on the left hand side to have dimensions of temperature. The right side also has dimension of temperature, since for this fractional Brownian motion of temperature, $\sigma= \Theta T^H$ and the random function $B^H(1,{\bf x}) $ has variance 1. In this work we are considering $\ell_s$ to be the radius of the sphere which is thereafter assumed $1$. Then one may choose dimensionless time $t^*=t/t_s$ and dimensionless temperature $\widetilde{\sol}^{*}(t)=\widetilde{\sol}(t)/\widetilde{\sol}_s$ with $\widetilde{\sol}_s=\sigma t_s^H$, so that the dimensionless variables satisfy the normalized equation \eqref{eq:fSPDE} with a normalized ($\sigma^*=1$) random forcing term. However, in physical descriptions it is sometimes convenient to choose other scales such as the mean CMB for temperature and the current age of the universe for time. In that case, the exponential factor $\exp(-\freigv t/t_s)$ for decay of amplitudes in (4.5) would involve  a time scale that is no longer set to be unity.

We adopt the \emph{conformal time} 
\begin{equation*}
	\eta := \eta(t) := \int_{0}^{t}\frac{d{t'}}{a(t')},\quad t\ge0
\end{equation*}
to replace the  cosmic time $t$ in the equations, where $a(t)$ is the {scale factor} of the universe at time $t$.
By using $\eta$ rather than $t$ as the time coordinate of evolution in the fractional SPDE, the decrease of spectral amplitudes is in better agreement with current physical theory.

We are concerned with the evolution of CMB before recombination that occurred at time $\eta=\eta_*$. If the present time is set to be $\eta_0=1$, then $\eta_{*}\approx 2.735\times 10^{-5}$. (We use $13.82$ billion years, estimated by Planck 2015 results \cite{Planck2015XIII}, as the age of the universe.) In the epoch of the radiation-dominated plasma universe, $a(t)$ is proportional to $t^{1/2}$, and thus $\eta$ is proportional to $t^{1/2}$, therefore proportional to $a(t)$. Then, temperature varies in proportion to $1/t$ (e.g. refer to Weinberg \cite{Weinberg2008}). Since the temperature is relatively uniform, all the amplitudes will vary approximately as $1/t$, and the peaks in the power spectrum will vary as $1/t^2$ and thus in proportion to $1/\eta^{4}$ (this does not take into account the motion of remnant acoustic waves). A large fraction  of the radiation-dominated epoch passes by as $\eta$ changes by $10^{-5}$.  In the following experiment, we examine the evolution by the fractional stochastic PDE for times a little beyond $\eta_*$, as if the radiation energy remained dominant.

The time scale $t_{s}$ can be estimated as follows. For $\eta\ge\eta_{*}$, let $\Delta\eta:=\eta-\eta_{*}$. Using the CMB map (in Figure~\ref{fig:CMB.map}) at recombination time $\eta_{*}$ as the initial condition, the scale $t_{s}$ between the increment of conformal time $\Delta \eta$ and the evolution time $t$ of the normalized equation \eqref{eq:fSPDE} can be determined by matching the ratio of magnitudes of angular power spectra of the evolutions $\widetilde{\sol}(\Delta\eta)=\sol(\Delta\eta/t_{s})$ at $\Delta \eta = \Delta\eta_{1}$ and $\Delta\eta_{2}$ and at degree $\ell$, that is,
\begin{equation}\label{eq:ratio.APS}
	\frac{\APS(\Delta\eta_{2}/t_{s})}{\APS(\Delta\eta_{1}/t_{s})} = \left(\frac{\eta_{1}}{\eta_{2}}\right)^{4}.
\end{equation}
By \eqref{eq:orth.Fcoe.RF} and the observation that $\exp\bigl(-\freigv(t+t_0)\bigr)\Fcoe{(\RF_{0})}\shY$ is the dominating term in \eqref{eq:sol.fr.scalar.SPDE}, \eqref{eq:ratio.APS} can be approximated by
\begin{equation*}
	\left(\frac{\eta_{1}}{\eta_{2}}\right)^{2} \approx \frac{a_{\ell m}(\Delta\eta_{2}/t_{s})}{a_{\ell m}(\Delta\eta_{1}/t_{s})} \approx \frac{\exp\bigl(-\freigv(\Delta\eta_{2}/t_{s}+t_{0})\bigr)}{\exp\bigl(-\freigv(\Delta\eta_{1}/t_{s}+t_{0})\bigr)} = e^{-\freigv (\eta_{2}-\eta_{1})/t_{s}},
\end{equation*}
where $a_{\ell m}(t)$ is the Fourier coefficient of $\sol(t)$ at degree $(\ell,m)$.
This gives
\begin{equation}\label{eq:time.scale}
	t_{s} := t_{s}(\ell) \approx \frac{(\eta_{2}-\eta_{1})\freigv}{2\ln (\eta_{2}/\eta_{1})}.
\end{equation}

In the experiment, we use CMB data from Planck 2015 results, see \citep{Planck2016I}.
The CMB data are located on $\sph{2}$ at HEALPix points as we used in Section~\ref{sec:numer.Gauss}.
Figure~\ref{fig:CMB.map} shows the CMB map at $N_{\rm side}=1024$ at $10$ arcmin resolution with $12\times 1024^2=12,582,912$ HEALPix points, see \citep{Planck2016IX}. It is computed by SMICA, a component separation method for CMB data processing, see \citep{CaLeDeBePa2008}.

\begin{figure}[th]
 \centering
  \begin{minipage}{\textwidth}
  \centering
\begin{minipage}{0.6\textwidth}
\centering
  \includegraphics[trim = 40mm 30mm 30mm 20mm, width=\textwidth]{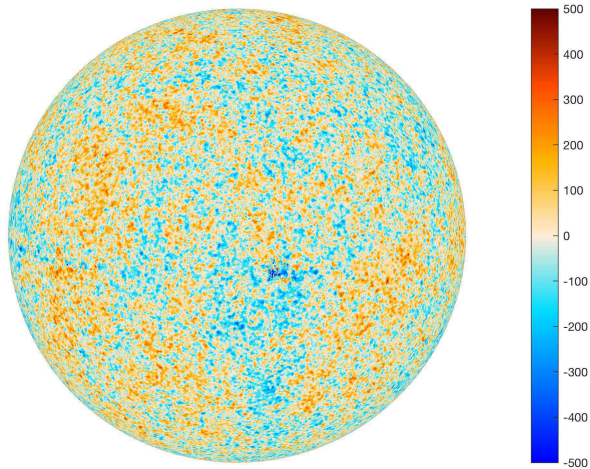}\\
  \begin{minipage}{0.8\textwidth}
  \caption{\scriptsize CMB map at $12,582,912$ HEALPix points} \label{fig:CMB.map}
  \end{minipage}
\end{minipage}
\end{minipage}
\end{figure}

We use the angular power spectrum  of CMB temperature intensities in Figure~\ref{fig:CMB.map}, obtained by Planck 2015 results \cite{Planck2016XI}, as the initial condition (the angular power spectrum of the initial Gaussian random field) of Cauchy problem \eqref{eq:fSCauchy}. In the fractional SPDE \eqref{eq:fSPDE}, we take $(\alpha,\gamma, H)=(0.5,0.5,0.9)$ so that the fractional diffusion operator acts as the drifting term within an evolution equation that is wave-like. The long-tailed stochastic fluctuations take account of the turbulence within the hot plasma.

For $\ell\ge0$, let $D_{\ell}:=\ell(\ell+1)\APS/(2\pi)$ be the scaled angular power spectrum.
We take $\eta_{1}=1.001\eta_{*}\approx 2.743\times10^{-5}$ and $\eta_{2}=1.1\eta_{*}\approx 3.014\times10^{-5}$, and then the first (highest) humps (at $\ell=219$) of the scaled angular power spectra $D_{\ell}(\Delta\eta_{1}/t_{s})$ and $D_{\ell}(\Delta\eta_{2}/t_{s})$ are expected to be $99.6\%$ and $68.3\%$ of that at recombination time $\eta_{*}$ respectively. The time scale can be determined by \eqref{eq:time.scale} taking $\ell=219$, then, $t_{s}=t_{s}(219)\approx0.0032$.

The picture in Figure~\ref{fig:H09_t8.68e-06} shows a realization of the solution $\sol_{1000}(\Delta\eta_{1}/t_{s})$ at an early conformal time $\eta_{1}=1.001 \eta_{*}$ which is similar to the original CMB map in Figure~\ref{fig:CMB.map}. The picture of Figure~\ref{fig:H09_t0.000868} shows a realization of the solution $\sol_{1000}(\Delta\eta_{2}/t_{s})$ at $\eta_{2}=1.1\eta_{*}$.

\begin{figure}[th]
  \centering
  \begin{minipage}{0.9\textwidth}
  \centering
  \begin{minipage}{\textwidth}
  \begin{minipage}{0.485\textwidth}
  \centering
  \includegraphics[trim = 40mm 50mm 30mm 20mm, width=1.02\textwidth]{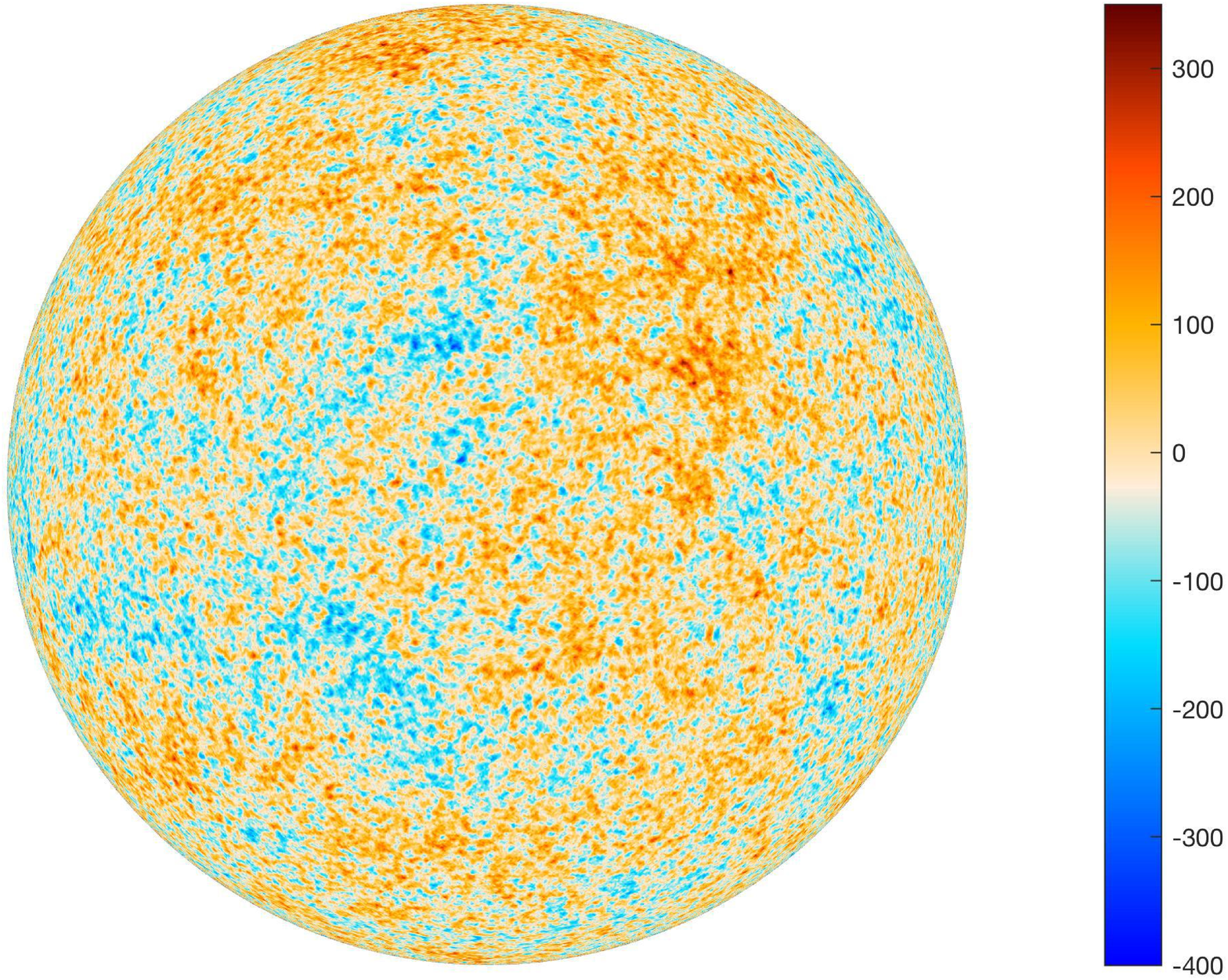}\\
  \subcaption{$X_{1000}(\Delta\eta_{1}/t_{s})$, $\eta_{1}=1.001\eta_{*}$~~~~~}\label{fig:H09_t8.68e-06}
  \end{minipage}
  \hspace{0.0\textwidth}
  \begin{minipage}{0.485\textwidth}
  \centering
  \includegraphics[trim = 40mm 50mm 30mm 20mm, width=1.02\textwidth]{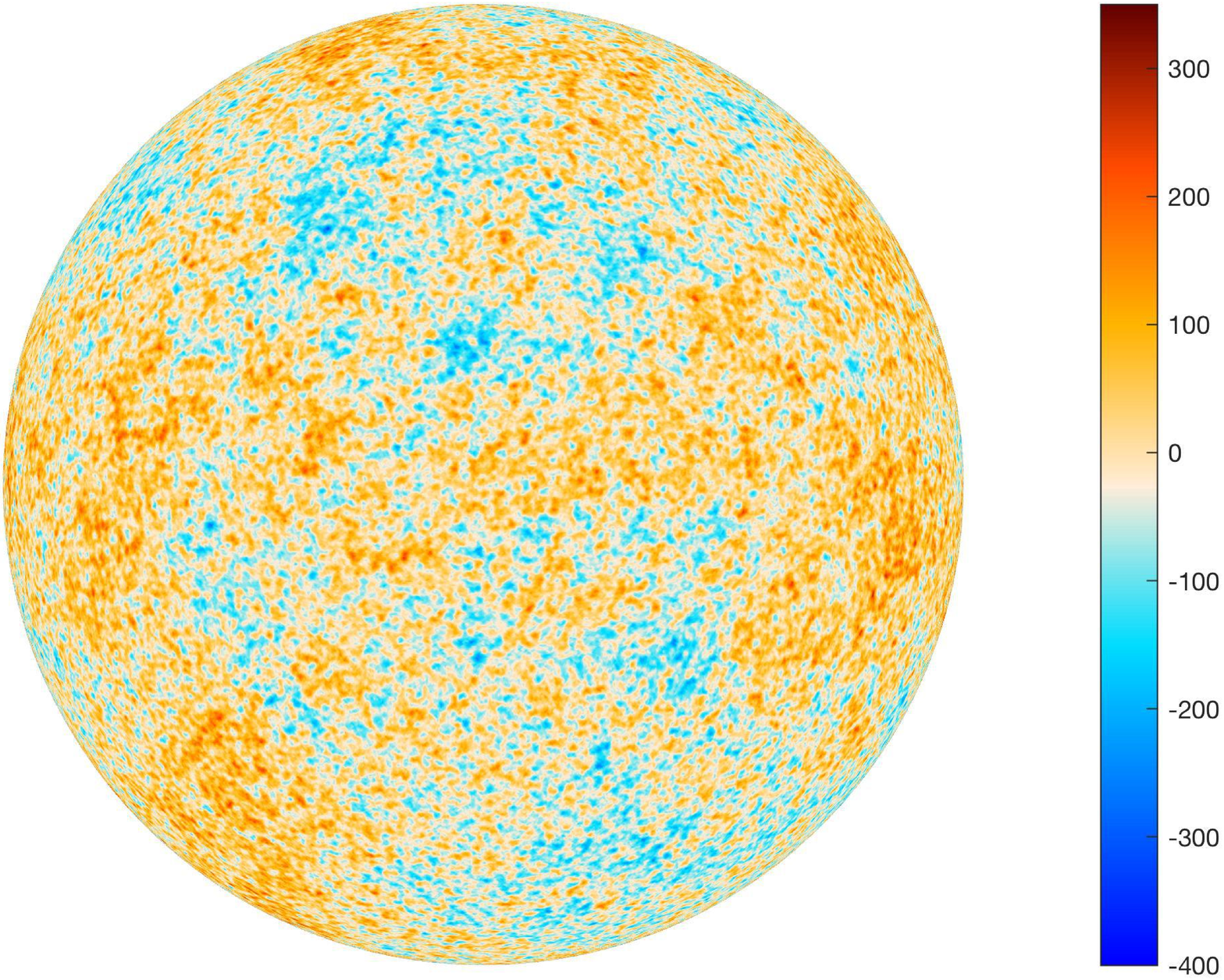}\\
  \subcaption{$X_{1000}(\Delta\eta_{2}/t_{s})$, $\eta_{2}=1.1\eta_{*}$~~~~~}\label{fig:H09_t0.000868}
  \end{minipage}
  \end{minipage}
  \end{minipage}\\[2mm]
  \begin{minipage}{0.8\textwidth}
\caption{\scriptsize (a)--(b) show realizations of the truncated Karhunen-Lo\`{e}ve expansion $\sol_{L}(\Delta\eta/t_{s})$ with degree $L=1000$ at $\eta=1.001\eta_{*}\approx 2.743\times 10^{-5}$ and $1.1\eta_{*}\approx 3.014\times10^{-5}$ for the solution of the fractional SPDE with Hurst index $\hurst=0.9$, and the CMB angular power spectrum $\APS$ at recombination time $\eta_{*}\approx 2.735\times 10^{-5}$ as the angular power spectrum for $\RF_{0}$ in the initial condition, where $(\alpha,\gamma)=(0.5,0.5)$, $t_{0}=10^{-7}$ and time scale $t_{s}\approx 0.0032$.
}\label{fig:CMB.evol}
\end{minipage}
\end{figure}

By Lemma~\ref{lem:orth.Fcoe.RF}, we estimate the angular power spectrum of solution $\sol(t)$ by taking the mean of the squares of the Fourier coefficients $a_{\ell m}(t)$ over integer orbital index $m\in\{-\ell,-\ell+1,\cdots,\ell\}$ and over $N$ realizations:
\begin{equation*}
	\APS(t) = \frac{1}{2\ell+1}\sum_{m=-\ell}^{\ell}\expect{|a_{\ell m}(t)|^{2}} \approx \frac{1}{N(2\ell+1)}\sum_{n=1}^{N}\sum_{m=-\ell}^{\ell}a_{\ell m}(t,\sampdis_{n}).
\end{equation*}

The reddish brown curve in Figure~\ref{fig:CMB.APS} shows the scaled angular power spectrum $D_{\ell}$, $\ell\le1000$, of CMB map at recombination time $\eta_{*}$.
The dots in orange around the reddish brown curve show the estimated the angular power spectrum $D_{\ell}(\Delta\eta_{1}/t_{s})$ of the solution $\sol_{1000}(\Delta\eta_{1}/t_{s})$ at evolution time $\Delta\eta_1 = 0.001\eta_*$ from the recombination time $\eta_{*}$, by taking the sample mean of $N=100$ realizations. It is observed that $D_{\ell}(0.001\eta_{*}/t_{s})$ changes little from those of CMB at $\eta_{*}$. 
The dots in blue show the estimated angular power spectrum $D_{\ell}(\Delta\eta_{2}/t_{s})$ of the solution $\sol_{1000}(\Delta\eta_{2}/t_{s})$ at $\Delta\eta_{2}=0.1\eta_{*}$ from $\eta_{*}$. The first hump of $D_{\ell}(0.1\eta_{*}/t_{s})$ (which appears at $\ell=210$) is about $69.8\%$ of that at the recombination time. This is consistent with the theoretical estimation $68.3\%$.

Figure~\ref{fig:CMB.APS} also shows the estimated angular power spectrum for one realization of $\sol_{1000}(\Delta\eta/t_{s})$ at $\eta=\eta_{1}$ and $\eta_{2}$ in light blue and red points. They have large noises around the sample means of $D_{\ell}$ (in orange and blue) when degree $\ell\le 400$.

\begin{figure}[th]
 \centering
  \begin{minipage}{\textwidth}
  \centering
\begin{minipage}{0.75\textwidth}
\centering
  \includegraphics[trim = 0mm 0mm 0mm 0mm, width=0.94\textwidth]{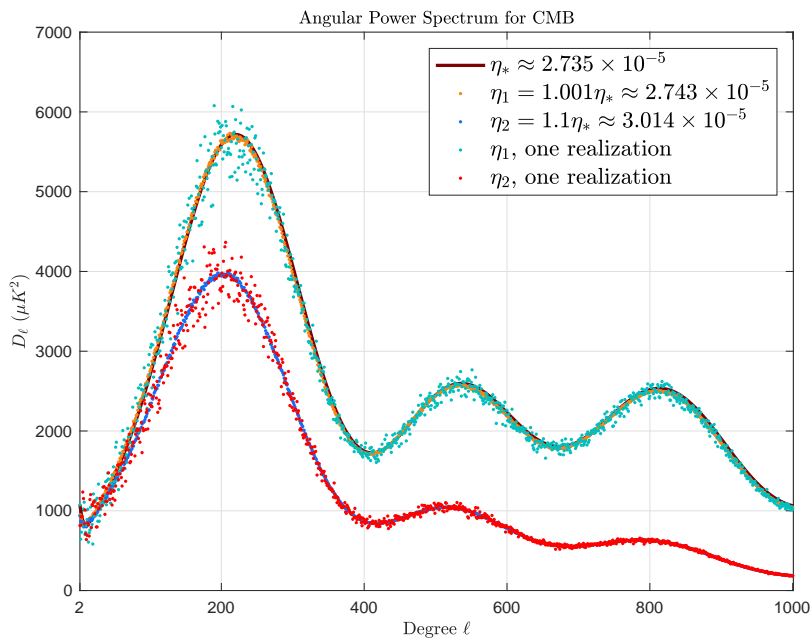}\\[2mm]
  \begin{minipage}{0.8\textwidth}
  \caption{\scriptsize Scaled angular power spectrum $D_\ell$ for CMB data at $\eta_{*}\approx 2.735\times10^{-5}$, and sample means of $D_{\ell}$ over $N=100$ realizations at $\eta_{1}=1.001\eta_{*}\approx 2.743\times10^{-5}$ (in orange) and $\eta_{2}=1.1\eta_{*}\approx 3.014\times10^{-5}$ (in blue), and estimated $D_{\ell}$ of one realization of $\sol_{1000}(\Delta\eta/t_{s})$ at $\eta=\eta_{1}$ (in light blue) and $\eta_{2}$ (in red)} \label{fig:CMB.APS}
  \end{minipage}
  \end{minipage}
\end{minipage}
\end{figure}


Figures~\ref{fig:CMB.APS} and \ref{fig:CMB.evol} illustrate that the solution of the fractional SPDE \eqref{eq:fSPDE} can be used to explore a possible forward evolution of CMB.

 Cosmological data typically have correlations over space-like separations, an imprint of quantum fluctuations and rapid inflation immediately after the big bang \citep{Guth1997}, followed by acoustic waves through the primordial ball of plasma. Fields at two points with space-like separation cannot be simultaneously modified by  evolution processes that obey the currently applicable laws of relativity. Therefore it is inappropriate to apply Brownian motion and standard diffusion models, with consequent unbounded propagation speeds, over cosmological distances. Although the phenomenological fractional SPDE models considered here are not relativistically invariant, they are a relatively simple device of maintaining long-range correlations.

\appendix


\section*{Acknowledgements}
This research was supported under the Australian Research Council's \emph{Discovery Project} DP160101366 and was supported in part by the La Trobe University DRP Grant in Mathematical and Computing Sciences.
We are grateful for the use of data from the Planck/ESA mission, downloaded from the Planck Legacy Archive. Some of the results in this paper have been derived using the HEALPix \citep{Gorski_etal2005}. This research includes extensive computations using the Linux computational cluster Raijin of the National Computational Infrastructure (NCI), which is supported by the Australian Government and La Trobe University.
The authors would thank Zdravko Botev for his helpful discussion on simulations of fractional Brownian motions. The authors also thank Ming Li for helpful discussion.


\appendix \section{Proofs}

\section*{Proof of Section~\ref{sec:fBm}}
\begin{proof}[Proof of Proposition~\ref{thm:BD.Int.fBmsph}]
	By Definition~\ref{defn:IntD.fBmsph}, Parseval's identity for the orthonormal basis $\shY$ and \citep[Theorem~1.1]{MeMiVa2001},
	\begin{align*}
		\expect{\norm{\int_{s}^{t}g(u)\IntD{\fBmsph(u)}}{\Lp{2}{2}}^{2}} 
		&= \sum_{\ell=0}^{\infty}\sum_{m=-\ell}^{\ell}\expect{\left|\int_{s}^{t}g(u)\IntD{\fBm_{\ell m}(u)}\right|^{2}}\\
		&\le C_{\hurst}\: \left(\int_{s}^{t}|g(u)|^{\frac{1}{\hurst}}\IntD{u}\right)^{2\hurst} \sum_{\ell=0}^{\infty}(2\ell+1)\vfBm,
	\end{align*}
	thus completing the proof.
\end{proof}

\section*{Proofs of Section~\ref{sec:fSPDE}}
\begin{proof}[Proof of Proposition~\ref{prop:isotr.sol.Cauchy}]
	For $t\ge0$, $\PT{x},\PT{y}\in\sph{2}$, by \eqref{eq:sol.fr.stoch.Cauchy},	
	\begin{align*}
		&\expect{\solC(t,\PT{x}) \solC(t,\PT{y})}\\
		&\quad= \expect{\sum_{\ell=0}^{\infty}\sum_{m=-\ell}^{\ell} e^{-\freigv t}\Fcoe{(\RF_{0})}\shY(\PT{x}) \sum_{\ell'=0}^{\infty}\sum_{m'=-\ell'}^{\ell'} e^{-\freigv[{\eigvm[\ell']}] t}\Fcoe[\ell' m']{(\RF_{0})}\shY[\ell',m'](\PT{y})}\\
		&\quad= \sum_{\ell=0}^{\infty}\sum_{\ell'=0}^{\infty}\sum_{m=-\ell}^{\ell}\sum_{m'=-\ell'}^{\ell'} e^{-\freigv t}e^{-\freigv[{\eigvm[\ell']}] t}\expect{\Fcoe{(\RF_{0})}\Fcoe[\ell' m']{(\RF_{0})}}\shY(\PT{x}) \shY[\ell',m'](\PT{y}).
	\end{align*}
	Since $\RF_{0}$ is a $2$-weakly isotropic Gaussian random field, by Lemma~\ref{lem:orth.Fcoe.RF},
\begin{align*}
		\expect{\solC(t,\PT{x}) \solC(t,\PT{y})}
		&= \sum_{\ell=0}^{\infty}\sum_{m=-\ell}^{\ell}e^{-2\freigv t}\vTc\:\shY(\PT{x}) \shY(\PT{y})\\
		&= \sum_{\ell=0}^{\infty}e^{-2\freigv t}\vTc(2\ell+1)\Legen{\ell}(\PT{x}\cdot\PT{y}),
	\end{align*}
	where the second equality uses the addition theorem in \eqref{eq:addition.theorem}. This means that the covariance $\expect{\solC(t,\PT{x}) \solC(t,\PT{y})}$ is a zonal function and is thus rotationally invariant. Thus, $\solC(t,\cdot)$ is a $2$-weakly isotropic Gaussian random field on $\sph{2}$.
	
	On the other hand, for $\ell,\ell'\in \Nz$, $m,=-\ell,\dots,\ell$ and $m'=-\ell',\dots,\ell'$, by the $2$-weak isotropy of $\RF_{0}$, Lemma~\ref{lem:orth.Fcoe.RF} and \eqref{eq:sol.fr.stoch.Cauchy},
	\begin{align*}
		\expect{\Fcoe{\solC(t)}\Fcoe[\ell' m']{\solC(t)}}
		&= e^{-\freigv t}e^{-\freigv[{\eigvm[\ell']}] t}\:\expect{\Fcoe{(\RF_{0})}\Fcoe[\ell' m']{(\RF_{0})}}\\
		&= e^{-2\freigv t} \vTc\: \Kron\Kron[m m'],
	\end{align*}
	thus proving \eqref{eq:Fcoe.sol.Cauchy.cov}.
\end{proof}

\begin{proof}[Proof of Theorem~\ref{thm:sol.fr.SPDE.scalar}] One can rewrite the equation \eqref{eq:fSPDE} as
\begin{equation*}
 \sol(t) = \solC(t_{0}) - \int_{0}^{t}\psi(-\LBo) \sol(u) \IntD{u} + \fBmsph(t).
\end{equation*}
Then by Definition~\ref{defn:fBmsph},
\begin{align}\label{eq:sol.shY}
	&\sum_{\ell=0}^{\infty}\sum_{m=-\ell}^{\ell}\InnerL{\sol(t),\shY}\shY\notag\\
	&\qquad = \sum_{\ell=0}^{\infty}\sum_{m=-\ell}^{\ell} \biggl(
		\InnerL{\solC(t_{0}),\shY} - \int_{0}^{t}\InnerL{\sol(u),\shY}\psi(-\LBo)\IntD{u} +
		\KLcoe(t)\biggr)\shY\notag\\
	&\qquad = \sum_{\ell=0}^{\infty}\sum_{m=-\ell}^{\ell} \biggl(
		\InnerL{\solC(t_{0}),\shY} - \freigv\int_{0}^{t}\InnerL{\sol(u),\shY} \IntD{u} +
		\KLcoe(t)\biggr) \shY.
\end{align}
By the uniqueness of the spherical harmonic representation, see e.g. \citep{Rudin1950}, solving \eqref{eq:sol.shY} is equivalent to solving the equations
\begin{equation*}
	\InnerL{\sol(t),\shY} =
		\InnerL{\solC(t_{0}),\shY} - \freigv\int_{0}^{t}\InnerL{\sol(u),\shY} \IntD{u} + \KLcoe(t)
\end{equation*}
 for $m=-\ell,\dots,\ell$, $\ell\in \Nz$.
Using the variation of parameters, we can solve this integral equation (of $t$) path-wise to obtain, for $m=-\ell,\dots,\ell$, $\ell\in \Nz$,
\begin{equation*}
	\InnerL{\sol(t),\shY} =
		e^{-\freigv t}\InnerL{\solC(t_{0}),\shY}  + \int_{0}^{t}e^{-\freigv (t-u)}\IntD{\KLcoe(u)},
\end{equation*}
see e.g. \cite{ChKaMa2003,LaSc2015}.
Using \eqref{eq:fBmsph.real.expan},
\begin{align*}
	\sol(t)
	&= \sum_{\ell=0}^{\infty}\sum_{m=-\ell}^{\ell}\Bigl(e^{-\freigv t} \InnerL{\solC(t_{0}),\shY}  + \int_{0}^{t}e^{-\freigv (t-u)}\IntD{\KLcoe(u)}
	\Bigr)\shY\\
	&= \sum_{\ell=0}^{\infty}\biggl(\sum_{m=-\ell}^{\ell}e^{-\freigv (t+t_{0})}\Fcoe{(\RF_{0})}\shY\\
	&\hspace{1.5cm} + \sqrt{\vfBm}\Bigl(\int_{0}^{t}e^{-\freigv(t-u)}\IntBa[{\ell 0}](u)\:\shY[\ell,0]\notag\\
          &\hspace{3cm} +\sqrt{2}\sum_{m=1}^{\ell}\bigl(\int_{0}^{t}e^{-\freigv(t-u)}\IntBa(u) \:\CRe \shY \\
          &\hspace{4.6cm}+ \int_{0}^{t}e^{-\freigv(t-u)}\IntBb(u) \:\CIm \shY\bigr)\Bigr)\biggr),
\end{align*}
where the second equality uses \eqref{eq:sol.fr.stoch.Cauchy}, which completes the proof.
\end{proof}

\begin{proof}[Proof of Proposition~\ref{prop:Xt.coeff.fSI}] 
	For $\hurst=1/2$, It\^{o}'s isometry, see e.g. \citep[Lemma~3.1.5]{Oksendal2003} and \citep{LaSc2015}, gives
\begin{align}
	\expect{\left|\int_{s}^{t}e^{-\freigv(t-u)}\IntB(u)\right|^{2}}
	&= \expect{\int_{s}^{t}\left|e^{-\freigv(t-u)}\right|^{2}\IntD{u}}\notag\\\
	&= \int_{s}^{t}e^{-2\freigv(t-u)}\IntD{u}\notag\\
	&= \frac{1-e^{-2\freigv(t-s)}}{2\freigv}\notag\\[1mm]
	&= \bigl(\GSD[\ell,t-s]^{\hurst}\bigr)^{2}.\label{eq:BM.int.var1}
\end{align} 
	
For $1/2<\hurst<1$, by \citep[Eq.~1.3]{MeMiVa2001},
\begin{align}
	&\expect{\left|\int_{s}^{t}e^{-\freigv(t-u)}\IntB(u)\right|^{2}}\notag\\
	&\quad= \hurst(2\hurst-1)\int_{s}^{t}\int_{s}^{t}e^{-\freigv(2t-u-v)}|u-v|^{2\hurst-2}\IntD{u}\IntD{v}\notag\\
	&\quad= \hurst(2\hurst-1)\left(\int_{2s}^{s+t}
	\int_{0}^{x-2s}e^{-\freigv(2t-x)}y^{2\hurst-2}\IntD{y}\IntD{x}
	+ \int_{s+t}^{2t}
	\int_{0}^{2t-x}e^{-\freigv(2t-x)}y^{2\hurst-2}\IntD{y}\IntD{x}\right)\notag\\
	&\quad= \hurst\left(\int_{2s}^{s+t}
	e^{-\freigv(2t-x)}(x-2s)^{2\hurst-1}\IntD{x}	+ \int_{s+t}^{2t}
	e^{-\freigv(2t-x)}(2t-x)^{2\hurst-1}\IntD{x}\right)\notag\\
	&\quad= \hurst\left(\int_{0}^{t-s}
	e^{-\freigv(2t-2s-u)}u^{2\hurst-1}\IntD{u}	+ \int_{0}^{t-s}
	e^{-\freigv u}u^{2\hurst-1}\IntD{u}\right)\notag\\
	&\quad= \hurst(t-s)^{2\hurst}\left(e^{-2\freigv(t-s)}\int_{0}^{1}
	e^{\freigv(t-s)u}u^{2\hurst-1}\IntD{u}	+ \int_{0}^{1}
	e^{-\freigv (t-s)u}u^{2\hurst-1}\IntD{u}\right)\notag\\
	&\quad=\hurst \Gamma(2\hurst) (t-s)^{2\hurst} \left(e^{-2\freigv(t-s)}\igamma{2\hurst,-\freigv(t-s)} + \igamma{2\hurst,\freigv(t-s)}\right),\label{eq:fBm.int.var1}
\end{align}
where the second equality uses integration by substitution $x=u+v$ and $y=u-v$. 

By e.g. \citep[p.~253]{PiTa2000}, \eqref{eq:BM.int.var1} and \eqref{eq:fBm.int.var1}, the fractional stochastic integral in \eqref{eq:fBm.int} is a Gaussian random variable with mean zero and variance $\bigl(\GSD[\ell,t-s]^{\hurst}\bigr)^{2}$ given by \eqref{eq:var.int.fBm} (and \eqref{eq:var.int.BM}) for $\hurst\in[1/2,1)$.

The upper bound in \eqref{eq:Gvar.UB} is by \citep[Theorem~1.1]{MeMiVa2001}:
\begin{align*}
		\Gvar[\ell,t-s]
		&=\expect{\left|\int_{s}^{t}e^{-\freigv(t-u)}\IntB(u)\right|^{2}}\\
		&\le C_{\hurst}\left(\int_{s}^{t}\left|e^{-\freigv(t-u)}\right|^{\frac{1}{\hurst}}\IntD{u}\right)^{2\hurst}\\
		&\le C_{\hurst} (t-s)^{2\hurst},
\end{align*}
thus completing the proof.
\end{proof}

\begin{proof}[Proof of Proposition~\ref{prop:Gvar.diff.t.fBm}] We first consider for $\hurst=1/2$. For $\ell=0$, the statement immediately follows from $\GSD[0,t]=\sqrt{t}$. For $\ell\ge1$, it follows from \eqref{eq:var.int.BM} that
\begin{equation}\label{eq:Gvar.BM.increment}
	\left|\GSD[\ell,t+h]-\GSD[\ell,t]\right|
	=\Bigl(\sqrt{1-e^{-2\freigv (t+h)}} - \sqrt{1-e^{-2\freigv t}}\Bigr)
		\sqrt{\frac{1}{2\freigv}}.
\end{equation}

When $t=0$, the formula \eqref{eq:Gvar.BM.increment} with the mean-value theorem gives as $h\to0+$ that there exists $h_1\in (0,h)$ such that
\begin{equation*}
	\left|\GSD[\ell,t+h]-\GSD[\ell,t]\right|
	=\sqrt{1-e^{-2\freigv h}}
		\sqrt{\frac{1}{2\freigv}}\le e^{-\freigv h_{1}} h^{1/2}\le h^{1/2}.
\end{equation*}
In a similar way, when $t>0$ and $h\to0+$, there exists $t_1\in (t,t+h)$ such that
\begin{equation*}
	\left|\GSD[\ell,t+h]-\GSD[\ell,t]\right|
	= \sqrt{\frac{\freigv }{2(1-e^{-2\freigv t_{1}})}}\:e^{-2\freigv t_{1}} h
	\le \sqrt{\frac{\freigv }{2(1-e^{-2\freigv t})}}\:e^{-2\freigv t} h.
\end{equation*}

For $1/2<\hurst<1$,
\begin{align*}
	&\int_{0}^{t+h}e^{-\freigv (t+h-u)} \IntB(u)\\
	&\quad= e^{-\freigv h}\int_{0}^{t}e^{-\freigv (t-u)} \IntB(u)
	   + \int_{t}^{t+h}e^{-\freigv (t+h-u)} \IntB(u).
\end{align*}
This with the triangle inequality for $\Lpprob{2}$ gives
\begin{align*}
	&\left|\GSD[\ell,t+h]-\GSD[\ell,t]\right|\\
	&\quad= \left|\norm{\int_{0}^{t+h}e^{-\freigv (t+h-u)} \IntB(u)}{\Lpprob{2}} - \norm{\int_{0}^{t}e^{-\freigv (t-u)} \IntB(u)}{\Lpprob{2}}\right|\\
	&\quad\le \norm{\int_{0}^{t+h}e^{-\freigv (t+h-u)} \IntB(u)-\int_{0}^{t}e^{-\freigv (t-u)} \IntB(u)}{\Lpprob{2}}\\
	&\quad= \norm{\left(e^{-\freigv h}-1\right)\int_{0}^{t}e^{-\freigv (t-u)}\IntB(u) + \int_{t}^{t+h}e^{-\freigv (t+h-u)} \IntB(u)}{\Lpprob{2}}\\	
	&\quad\le \left|1-e^{-\freigv h}\right|\norm{\int_{0}^{t}e^{-\freigv (t-u)}\IntB(u)}{\Lpprob{2}} \\
	&\qquad+ \norm{\int_{t}^{t+h}e^{-\freigv (t+h-u)} \IntB(u)}{\Lpprob{2}}.
\end{align*}
This with \eqref{eq:Gvar.UB} and the mean-value theorem gives that as $h\to0+$, there exists $h_{2}\in(0,h)$ such that
\begin{equation*}
	\left|\GSD[\ell,t+h]-\GSD[\ell,t]\right|
	\le C_{\hurst}\left(\freigv e^{-\freigv h_{2}} h\: t^{\hurst} +  h^{\hurst}\right)
	\le C_{\hurst}\left(\freigv h^{1-\hurst} t^{\hurst} +  1\right)h^{\hurst},
\end{equation*}
thus completing the proof.
\end{proof}

\begin{proof}[Proof of Theorem~\ref{thm:trsol.err}] The proof views the solution at given time $t$ as a random field on the sphere and uses an estimate of the convergence rate of the truncation errors of a $2$-weakly isotropic Gaussian random field on $\sph{2}$.

Let $\sola(t):=\sum_{\ell=0}^{\infty}\trsola(t)$ and $\solb(t):=\sum_{\ell=0}^{\infty}\trsolb(t)$. Proposition~\ref{prop:Xt.coeff.fSI} with \citep[Theorem~5.13]{MaPe2011} shows that for $t\ge0$, $\solb(t)$ is a $2$-weakly isotropic Gaussian random field with angular power spectrum $\{\vfBm \Gvar\}_{\ell\in\Nz}$. By \eqref{eq:var.int.BM} and \eqref{eq:feigv.est} for $\hurst=1/2$ and by \eqref{eq:Gvar.UB} for $1/2<\hurst<1$,
	\begin{equation*}
		\sum_{\ell=0}^{\infty}\vfBm \:\Gvar \ell^{2\smind+1}
		\le C_{\alpha,\gamma,t}\sum_{\ell=0}^{\infty} \vfBm  (1+\ell)^{2\smind+1} <\infty.
	\end{equation*}
	This and \citep[Corollary~4.4]{LeSlWaWo2017} imply $\solb(t)\in \sob{2}{\smind}{2}$ $\Pas$ Then, \citep[Propositions~5.2]{LaSc2015}
	gives
	\begin{equation}\label{eq:sol.b.err}
		\normB{\solb(t)-\sum_{\ell=0}^{\trdeg}\trsolb(t)}{\Lppsph{2}{2}} \le C \trdeg^{-\smind} ,
	\end{equation}
	where the constant $C=C_{\smind} \sqrt{\var{\normb{\solb(t)}{\sob{2}{\smind}{2}}}}$ depends on a constant $C_{\smind}$ and the standard deviation of the Sobolev norm of $\solb(t)$, where $C_{\smind}$ depends only on $\smind$.
	
	On the other hand,
	\begin{align}\label{eq:sola.L2err}
	\normB{\sola(t)- \sum_{\ell=0}^{\trdeg}\trsola(t)}{\Lppsph{2}{2}}
		& = \norm{\sum_{\ell=L+1}^{\infty}\sum_{m=-\ell}^{\ell}
		e^{-\freigv t}\InnerL{\solC(t_{0}),\shY}\shY}{\Lppsph{2}{2}}\notag\\
		&\le e^{-\freigv[{\eigvm[\trdeg]}] t} \norm{\solC(t_{0})}{\Lppsph{2}{2}}\notag\\
		&\le C \trdeg^{-\smind} \norm{\solC(t_{0})}{\Lppsph{2}{2}},
	\end{align}
	where the second line uses that $\freigv$ is increasing with respect to $\ell$, see \eqref{eq:fr.eigvm}, and in the last inequality, the constant $C$ depends only on $\alpha,\gamma$ and $t$, and we used \eqref{eq:feigv.est}.
	This with \eqref{eq:sol.b.err} gives \eqref{eq:sol.tr.err}.
\end{proof}

\begin{remark}
	In the proof of Theorem~\ref{thm:trsol.err}, the $L_{2}$-error in \eqref{eq:sol.b.err} for $\solb(t)$ which is driven by the fBm $\fBmsph(t)$ is the dominating error term.	The constant $C$ in \eqref{eq:sol.b.err} depends on the standard deviation of the Sobolev norm of $\solb(t)$. This implies that Theorem~\ref{thm:trsol.err} only needs the condition on the convergence rate of the variances $\vfBm$ of the fBm (but does not need the condition on the initial random field $\RF_{0}$).

The constant $C$ in \eqref{eq:sola.L2err} can be estimated by
\begin{equation*}
	C\ge \max_{\trdeg\ge1}\frac{L^{\smind}}{e^{\freigv[{\eigvm[\trdeg]}]t}}.
\end{equation*}
This implies
\begin{equation*}
	C\ge e^{-C'}(C'/t)^{C'},
\end{equation*}
where $C':=r/(\alpha+\gamma)$ and we used \eqref{eq:feigv.est}. This shows that when time $t\to0+$, the constant $C$ in \eqref{eq:sola.L2err} is not negligible.
\end{remark}

\begin{proof}[Proof of Lemma~\ref{lem:sol.represent}]
	For $t\ge0$, $h>0$, by \eqref{eq:trsol},
	\begin{align}\label{eq:trsol.inc}
  \trsol[\ell](t+h) &=  e^{-\freigv h}\trsol[\ell](t) \notag\\
  &\hspace{1.1cm}+ \sqrt{\vfBm}\Bigl(\int_{t}^{t+h}e^{-\freigv(t+h-u)}\IntBa[{\ell 0}](u)\:\shY[\ell,0]\notag\\
          &\hspace{2.5cm} +\sqrt{2}\sum_{m=1}^{\ell}\bigl(\int_{t}^{t+h}e^{-\freigv(t+h-u)}\IntBa(u) \:\CRe \shY\notag\\
          &\hspace{4.1cm}+ \int_{t}^{t+h}e^{-\freigv(t+h-u)}\IntBb(u) \:\CIm \shY\bigr)\Bigr).
\end{align}
By Proposition~\ref{prop:Xt.coeff.fSI}, for $m=-\ell,\dots,\ell$, $i=1,2$,
	 \begin{equation*}
	 \int_{t}^{t+h}e^{-\freigv(t+h-u)}\IntB(u) \sim \normal{0}{\Gvar[\ell,h]},
	 \end{equation*}
	 where $\Gvar[\ell,h]$ is given by \eqref{eq:var.int.fBm}.
	 Then \eqref{eq:trsol.inc} can be written as
	 	\begin{align*}
  \trsol[\ell](t+h)
   &=  e^{-\freigv h}\trsol[\ell](t) \\
  &\hspace{1cm}+ \sqrt{\vfBm}\:\GSD[\ell,h]\Bigl(\incsola[\ell 0](t) \:\shY[\ell 0] \\
  &\hspace{3.2cm}+ \sqrt{2} \sum_{m=1}^{\ell}\bigl(\incsola(t)\:\CRe\shY
	+ \incsolb(t)\:\CIm\shY\bigr)\Bigr),
\end{align*}
where $\{(\incsol[\ell m]^{1}(t),\incsol[\ell m]^{2}(t))| m=-\ell,\dots,\ell, \ell\in\Nz\}$ is a sequence of independent and standard normally distributed random variables. This and \eqref{eq:U.l} give \eqref{eq:variat.sol.time}.
\end{proof}

\begin{proof}[Proof of Theorem~\ref{thm:UB.fBm.L2err.sol}] By \eqref{eq:variat.sol.0} and \eqref{eq:var.int.BM},
	\begin{equation*}
		\trsol[\ell](t+h) - \trsol[\ell](t) = \bigl(e^{-\freigv (t+h)} - e^{-\freigv t}\bigr) \trsol[\ell](0)
		+ \sqrt{\vfBm}(\GSD[\ell,t+h]-\GSD)
		 \incsol(0).
	\end{equation*}
	Then,
	\begin{align*}
		&\sol(t+h)-\sol(t) \\
		&\quad=
		\sum_{\ell=0}^{\infty}\bigl(\trsol[\ell](t+h) - \trsol[\ell](t)\bigr)\\
		&\quad= \sum_{\ell=0}^{\infty}\bigl(e^{-\freigv (t+h)} - e^{-\freigv t}\bigr) \trsol[\ell](0) + \sum_{\ell=0}^{\infty}(\GSD[\ell,t+h]-\GSD) \sqrt{\vfBm}\:\incsol(0).
	\end{align*}
	Taking the squared $\Lp{2}{2}$-norms of both sides of this equation with Parseval's identity gives
	\begin{align}\label{eq:variat.sol.L2}
		&\normb{\sol(t+h)-\sol(t)}{\Lp{2}{2}}^{2}\notag\\
		&\quad= \sum_{\ell=0}^{\infty}\sum_{m=-\ell}^{\ell}\bigl(e^{-\freigv (t+h)} - e^{-\freigv t}\bigr)^{2}\bigl|\Fcoe{\solC(t_{0})}\bigr|^{2}\notag\\
		&\qquad + \sum_{\ell=0}^{\infty}\sum_{m=-\ell}^{\ell}(\GSD[\ell,t+h]-\GSD)^{2}\vfBm \bigl|\Fcoe{\incsol(0)}\bigr|^{2}\notag\\
		&\quad\le h^{2}\sum_{\ell=0}^{\infty}\sum_{m=-\ell}^{\ell}e^{-2\freigv t_2}\bigl|\Fcoe{\solC(t_{0})}\bigr|^{2}
		 + C h^{2\hurst} \sum_{\ell=0}^{\infty}\sum_{m=-\ell}^{\ell}(1+\freigv)^{2}\vfBm
		 \bigl|\Fcoe{\incsol(0)}\bigr|^{2}\notag\\
		&\quad\le h^{2}\sum_{\ell=0}^{\infty}\sum_{m=-\ell}^{\ell}\bigl|\Fcoe{\solC(t_{0})}\bigr|^{2}
		 + C h^{2\hurst} \sum_{\ell=0}^{\infty}\sum_{m=-\ell}^{\ell}(1+\freigv)^{2}\vfBm
		 \bigl|\Fcoe{\incsol(0)}\bigr|^{2},
	\end{align}
	where the first inequality uses Corollary~\ref{corol:Gvar.diff.t.fBm} and the mean value theorem for the function $f(t):=e^{-\freigv t}$ and $t_{2}$ is a real number in $(t,t+h)$.
	
	By \eqref{eq:variat.sol.L2}, \eqref{eq:Fcoe.incsol.cov}, \eqref{eq:feigv.est} and Propositions~\ref{prop:isotr.sol.Cauchy} and \ref{prop:Gvar.diff.t.fBm}, the squared mean quadratic variation of $\sol(t+h)$ from $\sol(t)$ is, as $h\to0+$,
	\begin{align*}\label{eq:variat.sol.L2psph}
		&\expect{\normb{\sol(t+h)-\sol(t)}{\Lp{2}{2}}^{2}}\\
		&\quad\le C h^{2\hurst} \biggl(\sum_{\ell=0}^{\infty}\sum_{m=-\ell}^{\ell}\expect{\bigl|\Fcoe{\solC(t_{0})}\bigr|^{2}}
		 + \sum_{\ell=0}^{\infty}\sum_{m=-\ell}^{\ell}(1+\freigv)^{2}\vfBm\expect{\bigl|\Fcoe{\incsol(0)}\bigr|^{2}}\biggr)\\
		&\quad\le C h^{2\hurst} \sum_{\ell=0}^{\infty}(2\ell+1)\left(\vT + (1+\freigv)^{2}\vfBm\right)\\
		&\quad= C h^{2\hurst},
	\end{align*}
	where the constant $C$ in the last line depends only on $\alpha$, $\gamma$, $t$, $\vT$ and $\vfBm$. This completes the proof.
\end{proof}

\end{document}